\documentclass[letterpaper, 11pt,  reqno]{amsart}

\usepackage{amsmath,amssymb,amscd,amsthm,amsxtra, esint}
\usepackage{color}

\usepackage[implicit=true]{hyperref}

\headheight=8pt
\topmargin=0pt
\textheight=624pt
\textwidth=432pt
\oddsidemargin=18pt
\evensidemargin=18pt

\allowdisplaybreaks[2]

\sloppy

\hfuzz  = 0.5cm 


\setlength{\pdfpagewidth}{8.50in}
\setlength{\pdfpageheight}{11.00in}

\definecolor{gr}{rgb}   {0.,   0.69,   0.23 }
\definecolor{bl}{rgb}   {0.,   0.5,   1. }
\definecolor{mg}{rgb}   {0.85,  0.,    0.85}
\definecolor{yl}{rgb}   {0.8,  0.7,   0.}
\definecolor{or}{rgb}  {0.7,0.2,0.2}

\newcommand{\dis}{\displaystyle}

\newtheorem{theorem}{Theorem} [section]

\newtheorem{lemma}[theorem]{Lemma}
\newtheorem{proposition}[theorem]{Proposition}
\newtheorem{remark}[theorem]{Remark}

\newtheorem{definition}[theorem]{Definition}
\newtheorem{corollary}[theorem]{Corollary}


\newcommand{\I}{\hspace{0.5mm}\text{I}\hspace{0.5mm}}
\newcommand{\II}{\text{I \hspace{-2.8mm} I} }
\newcommand{\III}{\text{I \hspace{-2.9mm} I \hspace{-2.9mm} I}}
\newcommand{\IV}{\text{I \hspace{-2.9mm} V}}
\newcommand{\noi}{\noindent}
\newcommand{\Z}{\mathbb{Z}}
\newcommand{\R}{\mathbb{R}}
\newcommand{\C}{\mathbb{C}}
\newcommand{\T}{\mathbb{T}}

\let\Re=\undefined\DeclareMathOperator*{\Re}{Re}
\let\Im=\undefined\DeclareMathOperator*{\Im}{Im}
\let\P= \undefined
\newcommand{\P}{\mathbf{P}}

\newcommand{\E}{\mathbb{E}}

\renewcommand{\L}{\mathcal{L}}

\newcommand{\F}{\mathcal{F}}

\newcommand{\al}{\alpha}
\newcommand{\be}{\beta}
\newcommand{\dl}{\delta}

\newcommand{\nb}{\nabla}

\newcommand{\Dl}{\Delta}
\newcommand{\eps}{\varepsilon}

\newcommand{\g}{\gamma}
\newcommand{\G}{\Gamma}
\newcommand{\ld}{\lambda}

\newcommand{\s}{\sigma}
\newcommand{\Si}{\Sigma}
\newcommand{\ft}{\widehat}

\newcommand{\wt}{\widetilde}
\newcommand{\cj}{\overline}
\newcommand{\dx}{\partial_x}
\newcommand{\dt}{\partial_t}
\newcommand{\dd}{\partial}

\renewcommand{\l}{\ell}
\renewcommand{\o}{\omega}
\renewcommand{\O}{\Omega}

\newcommand{\les}{\lesssim}
\newcommand{\ges}{\gtrsim}

\newcommand{\jb}[1]
{\langle #1 \rangle}

\newcommand{\ind}{\mathbf 1}

\newcommand{\M}{\mathcal{M}}

\newcommand{\N}{\mathbb{N}}

\newcommand{\Pk}{P^{(2m)}_2}

\newcommand{\Pkn}{P^{(2m)}_{2, N}}
\renewcommand{\H}{\mathcal{H}}
\newcommand{\Li}{L^{(1)}}

\newtheorem*{ackno}{Acknowledgements}

\numberwithin{equation}{section}
\numberwithin{theorem}{section}

\begin{document}
\baselineskip = 15pt

\title[Invariant Gibbs measures for the 2-$d$ defocusing NLS]
{A pedestrian approach to  the invariant Gibbs measures for 
the 2-$d$ defocusing nonlinear Schr\"odinger equations}

\author[T.~Oh and  L.~Thomann]
{Tadahiro Oh and Laurent Thomann}

\address{
Tadahiro Oh, School of Mathematics\\
The University of Edinburgh\\
and The Maxwell Institute for the Mathematical Sciences\\
James Clerk Maxwell Building\\
The King's Buildings\\
Peter Guthrie Tait Road\\
Edinburgh\\ 
EH9 3FD\\
 United Kingdom}

\email{hiro.oh@ed.ac.uk}

\address{
Laurent Thomann\\
Institut  \'Elie Cartan, Universit\'e de Lorraine, B.P. 70239,
F-54506 Vandoeuvre-l\`es-Nancy Cedex, France}

\email{laurent.thomann@univ-lorraine.fr}

\subjclass[2010]{35Q55}

\keywords{nonlinear Schr\"odinger equation; Gibbs measure; Wick ordering;
Hermite polynomial; Laguerre polynomial; white noise functional}

\begin{abstract}
We consider the defocusing nonlinear Schr\"odinger equations on 
the two-dimensional compact Riemannian manifold without boundary or a bounded domain in $\R^2$.
Our aim is to give a pedagogic and self-contained presentation on 
the Wick renormalization
in terms of the Hermite polynomials and the Laguerre polynomials
and construct the Gibbs measures corresponding to the Wick ordered Hamiltonian.
Then, we construct global-in-time solutions
with initial data distributed according to the Gibbs measure
and show that the law of the random solutions, at any time,
is again given by the Gibbs measure.

\end{abstract}

%
\maketitle
\tableofcontents

\baselineskip = 14pt

\section{Introduction}

\subsection{Nonlinear Schr\"odinger equations}

 Let $(\M,g)$ be  a two-dimensional compact Riemannian manifold without boundary or a bounded domain in $\R^2$.
We consider the defocusing nonlinear Schr\"odinger equation (NLS): 
\begin{align}\label{MNLS}
\begin{cases}
i \dt u + \Dl_g  u = |u|^{k-2}u \\
u|_{t = 0} = \phi,
\end{cases}
\qquad  (t,x) \in \R \times \M,
\end{align}
where  $\Delta_g$ stands for the Laplace-Beltrami operator on $\M$, $k = 2m \geq 4$ is an even integer, and  
 the unknown is the function $u: \R \times \M \longrightarrow \C$. 
 
\medskip

The aim of this article is to give a pedagogic and self-contained\footnote{with the exception 
of the Wiener chaos estimate (Lemma \ref{LEM:hyp3}).}
presentation on the construction of  an invariant  Gibbs measure for a renormalized version of~\eqref{MNLS}.  
In particular, we present an elementary Fourier analytic approach to the problem
in the hope that this will be accessible to readers (in particular those in dispersive PDEs)
without prior knowledge in quantum field theory and/or stochastic analysis.
In order to make the presentation simpler, we first detail the case of the flat torus 
$\M=\T^{2}$, where  $\T = \R/(2\pi \Z)$. 
Namely, we consider 
\begin{align}
\begin{cases}
i \dt u + \Dl  u = |u|^{k-2}u \\
u|_{t = 0} =  \phi,
\end{cases}
\qquad  (t,x) \in \R \times \T^2.
\label{NLS1}
\end{align}

The equation~\eqref{NLS1} is known to possess 
the following Hamiltonian structure:
\begin{equation}
 \dt u =  - i  \frac{\dd H}{\dd \cj u}, 
\label{Hamil0}
\end{equation}

\noi
where $H = H(u)$ is  the Hamiltonian given by 
\begin{align}
H(u) = \frac{1}{2}\int_{\T^2} |\nb u|^2dx
+ \frac1{k} \int_{\T^2} |u|^k dx.
\label{Hamil}
\end{align}

\noi
Moreover, 
 the mass 
 \[ M(u) = \int_{\T^2} |u|^2 dx\]
is also conserved under the dynamics of 
\eqref{NLS1}.

\begin{samepage}

%
\subsection{Gibbs measures}

Given a Hamiltonian flow on $\R^{2n}$:
\begin{equation} \label{HR2}
\begin{cases}
\dot{p}_j = \frac{\partial H}{\partial q_j} \\
\dot{q}_j = - \frac{\partial H}{\partial p_j}
\end{cases}
\end{equation}

\end{samepage}

\noi
 with Hamiltonian $ H (p, q)= H(p_1, \cdots, p_n, q_1, \cdots, q_n)$,
Liouville's theorem states that the Lebesgue measure
$\prod_{j = 1}^n dp_j dq_j$
on $\mathbb{R}^{2n}$ is invariant under the flow.
Then, it follows from the conservation of the Hamiltonian $H$
that  the Gibbs measures
$e^{-\beta H(p, q)} \prod_{j = 1}^{n} dp_j dq_j$ are invariant under the dynamics of~\eqref{HR2}.
Here, $\beta> 0$ denotes the reciprocal temperature.

NLS~\eqref{NLS1} is a Hamiltonian PDE,
where the Hamiltonian is conserved under its dynamics.
Thus by drawing an analogy to the finite dimensional setting, one expects the Gibbs measure of the form:\footnote{In the following, 
$Z$,  $Z_N$, and etc.~denote various  normalizing constants
so that the corresponding measures are probability measures when appropriate.}
\begin{align}
 \text{``}d\Pk = Z^{-1} \exp(- \be H(u))du\text{''}
\label{G1}
 \end{align}

\noi
to be invariant under the dynamics of~\eqref{NLS1}.\footnote{For simplicity, 
we set $\be = 1$ in the following.
See~\cite{OQ} for a discussion on the Gibbs measures
and different values of $\be > 0$.}
As it is,~\eqref{G1} is merely a formal expression
and we need to give a precise meaning.
From~\eqref{Hamil}, we can write~\eqref{G1} as
\begin{align}
\text{``}d\Pk = Z^{-1} e^{-\frac 1{2m} \int |u|^{2m} dx} e^{-\frac 12 \int |\nb u |^2 dx} du\text{''}.
\label{G2}
 \end{align}

\noi
This motivates us to define the Gibbs measure $\Pk$
as an absolutely continuous (probability) measure with respect to the 
following 
massless Gaussian free field:
$ d\mu = \wt Z^{-1} \exp\big(-\frac 12 \int |\nb u |^2 dx\big) du.$
In order to avoid the problem at the zeroth frequency, 
we instead consider 
the following 
massive Gaussian free field:
\begin{align}
 d\mu = \wt Z^{-1} e^{-\frac 12 \int |\nb u |^2 dx - \frac{1}{2}\int|u|^2 dx } du.
\label{G3}
\end{align}

\noi
in the following.
Note that this additional factor 
replaces $-H(u)$  by $-H(u) - \frac 12 M(u)$
in the formal definition~\eqref{G1} of $\Pk$.
In view of the conservation of mass, 
however, 
we still expect $\Pk$ to be invariant
if we can give a proper meaning to $\Pk$.

It is well known that 
 $\mu$ in~\eqref{G3}
corresponds to a mean-zero Gaussian  free  field on $\T^2$.
More precisely, $\mu$ is the mean-zero Gaussian measure
on $H^s(\T^2)$ for any $ s< 0$
with the covariance operator $Q_s = (\text{Id}-\Dl)^{-1+s}$.
Recall that a covariance operator $Q$
of a mean-zero probability measure $\mu$ on a Hilbert space $\H$
is a trace class operator, satisfying
\begin{align}
 \int_{\H} \jb{f, u}_\H \cj{\jb{h, u}}_\H d\mu (u)
= \jb{Q f, h}_\H
\label{G4}
\end{align}

\noi
for all $f, h \in \H$.

We can also view the Gaussian measure $\mu$
as the induced probability measure
under the map:\footnote{Strictly speaking, 
there is a factor of $(2\pi)^{-1}$ in~\eqref{G5}.
For simplicity of the presentation, however, 
we drop such harmless $2\pi$ hereafter.}
\begin{align}
 \o \in \O \longmapsto u(x) = u(x; \o) = \sum_{n \in \Z^2} \frac{g_n(\o)}{\sqrt{1+|n|^2}}e^{in\cdot x},
\label{G5}
 \end{align}

\noi
where $\{g_n\}_{n \in \Z^2}$
is a sequence of independent standard\footnote{Namely, $g_n$ has mean 0
and $\text{Var}(g_n) = 1$.} complex-valued Gaussian
random variables on a probability space $(\O, \F, P)$.
Namely, functions under $\mu$ are represented by 
the random Fourier series given in~\eqref{G5}.
Note that the random function $u$
in~\eqref{G5}
is in $H^{s}(\T^2)\setminus L^2(\T^2)$ for any $s < 0$, almost surely.
Thus, 
$\mu$ is a Gaussian probability measure on 
$H^s(\T^2)$ for any $s< 0$.
Moreover,  it is easy to see that 
\eqref{G4} with $\H = H^s(\T^2)$ $Q_s = (\text{Id}-\Dl)^{-1+s}$, $s< 0$, 
follows from~\eqref{G5}.  Indeed, we have
\begin{align}
 \int_{H^s} \jb{f, u}_{H^s} \cj{\jb{h, u}}_{H^s} d\mu (u)
& =  \E\bigg[ 
 \sum_{n \in \Z^2} \frac{\ft f(n) \cj{g_n(\o)}}{\jb{n}^{1-2s}}
 \sum_{m \in \Z^2} \frac{\cj{\ft h(m)} g_m(\o)}{\jb{m}^{1-2s}}
 \bigg]\notag \\
& = 
 \sum_{n \in \Z^2} \frac{\ft f(n)\cj{\ft h(n)}}{\jb{n}^{2-4s}}
= \jb{Q_s f, h}_{H^s}.
\label{G6}
\end{align}

\noi
Here, $\jb{\,\cdot\, } = (1 + |\cdot|)^\frac{1}{2}$.
Note that the second equality in~\eqref{G6} holds even for $s\geq0$.
For $s\geq 0$, however, $\mu$ is not a probability measure on $H^s(\T^2)$.
Indeed, we have  $\mu(L^2(\T^2)) = 0$.

The next step is to make sense of the Gibbs measure $\Pk$ in~\eqref{G2}.
First, let us briefly go over the situation when $d = 1$.
In this case, $\mu$ defined by~\eqref{G3}
is a probability measure on $H^s(\T)$, $s < \frac{1}{2}$.
Then, it follows from Sobolev's inequality
that $\int_\T|u(x; \o)|^k dx$ is finite almost surely.
Hence, for any $k > 2$, the Gibbs measure:
\begin{align}
d P^{(k)}_1 = Z^{-1} e^{-\frac 1k \int_{\T} |u|^k dx} d\mu 
\label{G7}
 \end{align}

\noi
is a probability measure 
on $H^s(\T)$, $s < \frac{1}{2}$,
absolutely continuous with respect to $\mu$.
Moreover, by constructing global-in-time dynamics
in the support of $ P^{(k)}_1$, 
Bourgain~\cite{BO94} proved that 
the Gibbs measure
$ P^{(k)}_1$ is invariant under the dynamics of 
the defocusing NLS for $k > 2$. 
Here, by invariance, we mean that 
\begin{align}
P^{(k)}_1\big(\Phi(-t) A\big) = P^{(k)}_1(A)
\label{inv1}
\end{align}

\noi
for any measurable set $A \in \mathcal{B}_{H^s(\T)}$
and any $t \in \R$, 
where $\Phi(t): u_0 \in H^s(\T) \mapsto 
u(t) = \Phi(t) u_0 \in H^s(\T)$
is a well-defined solution map, at least almost surely with respect to 
$P^{(k)}_1$.
McKean~\cite{McKean} gave 
an independent proof 
of the invariance of the Gibbs measure when $k = 4$,
relying on a probabilistic argument.
See Remark~\ref{REM:focusing} below
for the discussion on the focusing case.
Over the recent years, 
there has been a significant progress 
in the study of invariant Gibbs measures
for Hamiltonian PDEs.
See, for example, \cite{LRS, BO94, MV, McKean, BO96, BO97,  Zhid2,
TZ1, TZ2, BTIMRN, BT2, OH3,  OHSBO, Tzv, TTz, NORS, OQV, 
 SuzzoniBBM, BTT,  DengBO,  R, 
BTT1}.

The situation for $d = 2$ is entirely different.
As discussed above, the random function $u$ in~\eqref{G5}
is not in $L^2(\T^{2})$ almost surely.
This in particular implies that 
\begin{align}
 \int_{\T^2} |u(x; \o)|^k dx = \infty
\label{G8}
 \end{align}

\noi
almost surely for any $k \geq 2$.
Therefore, we can not construct a probability measure of the form:
\begin{align}
d P_2^{(k)} = Z^{-1} e^{-\frac 1k \int_{\T^2} |u|^k dx} d\mu. 
\label{G9}
 \end{align}

\noi
Thus, we are required to perform a (Wick) renormalization 
on the nonlinear part $|u|^{k}$
of the Hamiltonian.
This is a well studied subject 
in the Euclidean quantum field theory, at least in the real-valued setting.
See Simon~\cite{Simon} and  Glimm-Jaffe~\cite{GJ}.
Also, see Da Prato-Tubaro~\cite{DPT1}
for a concise discussion on $\T^2$,
where the Gibbs measures naturally appear in the context of the stochastic
quantization equation.

\subsection{Wick renormalization} 

There are different ways
to  introduce the Wick renormalization. 
One classical way is to  use the Fock-space formalism, 
where the Wick ordering is given as the reordering  of the creation and annihilation operators.  
See \cite{Simon, Meyer, DerezinskiGerard} for more details. 
It can be also defined through the multiple Wiener-Ito integrals.
In the following, we directly define it as the orthogonal projection onto 
the Wiener homogeneous chaoses  (see the Wiener-Ito decomposition \eqref{IW} below)
by using the Hermite polynomials and the (generalized) Laguerre polynomials,
since this allows us to introduce only  the necessary objects
without introducing cumbersome notations and formalism,
making our presentation accessible to readers without prior knowledge in the problem.

Before we study the Wick renormalization for NLS, let us briefly discuss the Wick renormalization on $\T^2$ in the real-valued setting.
We refer to  \cite{DPT1} 
for more details.
 We assume that $u$ is real-valued.
 Then,  the random function $u$ under $\mu$ in~\eqref{G3}
is represented by the random Fourier series~\eqref{G5}
conditioned that $g_{-n} = \cj{g_n}$.
Given $N \in \N$, 
let $\P_N$ be the Dirichlet projection onto the frequencies $\{|n|\leq N\}$
and set $u_N = \P_N u$, where $u$ is as in~\eqref{G5}.
Note that, for each $x \in \T^2$,  
the random variable $u_N(x)$ is a mean-zero real-valued Gaussian with variance
\begin{align}
\s_N: = \E [u_N^2(x)]
= \sum_{|n|\leq N}\frac{1}{1+|n|^2} \sim \log N.
\label{Wick1a}
\end{align}

\noi
Note that $\s_N$ is independent of $x \in \T^2$.
Fix  an even integer
$k \geq 4$. We define the Wick ordered monomial $:\! u_N^k  \!:$
by 
\begin{align}
:\! u_N^k \!: \, = H_k(u_N; \s_N), 
\label{Wick1}
\end{align}
	
\noi
where $H_k(x; \s)$ is the Hermite polynomial of degree $k$
defined in~\eqref{H1}.
Then, one can show that the limit
\begin{align}
\int_{\T^2} :\! u^k\!: dx
= \lim_{N \to \infty} 
\int_{\T^2}:\! u_N^k\!: dx
\label{Wick2}
\end{align}
	
\noi
exists in $L^p( \mu)$
for any finite $p \geq 1$.
Moreover, one can construct the Gibbs measure:
\begin{align*}
d P_2^{(k)} = Z^{-1} e^{-\frac 1k \int_{\T^2} :u^k : \,dx} d\mu
 \end{align*}

\noi
as the  limit of 
\begin{align*}
d P_{2,N}^{(k)} = Z_N^{-1} e^{-\frac 1k \int_{\T^2} :u_N^k : \, dx} d\mu.
 \end{align*}

\noi
The key ingredients of the proof of the above claims are
the Wiener-Ito decomposition of $L^2(H^s(\T^2), \mu)$ for $s< 0$, 
the hypercontractivity of the Ornstein-Uhlenbeck semigroup, 
and
Nelson's estimate~\cite{Nelson2, Nelson}.\medskip

For our problem on 
NLS~\eqref{NLS1}, we need to work on complex-valued functions.
In the real-valued setting, 
the Wick ordering was defined by 
the Hermite polynomials.
In the complex-valued setting, we also define the Wick ordering
by the Hermite polynomials, but through  applying the Wick
ordering the real and imaginary parts separately.

 Let   $ u$ be as in \eqref{G5}.
Given  
 $N\in \N$, we define $u_N$ by
\[u_N = \P_N u= \sum_{|n|\leq N }\ft u (n)e^{in\cdot x}, \]

\noi
where $\P_N$ is the Dirichlet projection onto the frequencies $\{|n|\leq N\}$ as above.
Then, 
for $m \in \N$, 
we define the Wick ordered 
monomial $:\! |u_N|^{2m}  \!:$
by 
\begin{align}
:\! |u_N |^{2m} \!: \, 
&
= \, :\! \big((\Re u_N)^2 + (\Im u_N)^2\big)^m\!: \,  \notag \\
& = \sum_{\l = 0}^m 
\begin{pmatrix}
m\\ \l
\end{pmatrix}
  \, :\! (\Re u_N)^{2\l} \!:    \, :\! (\Im u_N)^{2(m - \l)}\!:.
 \label{L0}
\end{align}

\noi
It turns out, however, that it is more convenient 
to work with 
the Laguerre polynomials in the current complex-valued  setting; see Section~\ref{SEC:2}.
Recall that the Laguerre polynomials $L_m(x)$
are 
defined through the following generating function:
\begin{equation}
G(t, x) : = \frac{1}{1-t} e^{- \frac{tx}{1-t}} = \sum_{m = 0}^\infty t^mL_m(x),
\label{L1}
 \end{equation}
	
\noi
for $|t| < 1$ and  $x\in \R$.
For readers' convenience, we write out the first few Laguerre polynomials
in the following:
\begin{align}
& 
\hphantom{XXXXX}
L_0(x) = 1, 
\qquad 
L_1(x) = -x+1, 
\qquad
L_2(x) = \tfrac{1}{2}(x^2 -4x + 2), \notag \\
&
 L_3(x) = \tfrac{1}{3!}(-x^3 + 9 x^2 - 18 x +6), 
\qquad 
L_4(x) = \tfrac{1}{4!}(x^4 - 16x^3 +72x^2 -96x +24).
\label{L1a}
\end{align}
More generally, the $L_{m}$ are given by the formula
\begin{align}
L_m(x) = 
\sum_{\l = 0}^m
\begin{pmatrix}
m\\ \l
\end{pmatrix}
\frac{(-1)^\l}{\l!} x^\l.
\label{L1b}
\end{align}

\noi
Given $\s >0$, we set 
\begin{align}
 L_m(x; \s) := \s^m L_m\big(\tfrac{x}{\s}\big).
\label{L2}
 \end{align}

\noi
Note that $L_m(x; \s)$ is a homogenous polynomial  of degree $m$ in $x$ and $\s$.
Then, given  $N \in \N$, 
we can rewrite 
the Wick ordered 
monomial $:\! |u_N|^{2m}  \!:$ defined in~\eqref{L0} as 
\begin{align}
 \,  :\! |u_N|^{2m} \!: \, =
 (-1)^m m! \cdot L_m(|u_N|^2; \s_N),
\label{L3}
\end{align}

\noi
where $\s_N$ is given by
\begin{align}
\s_N = \E [|u_N(x)|^2]
= \sum_{|n|\leq N}\frac{1}{1+|n|^2} \sim \log N,
\label{Wick4}
\end{align}

\noi
independently of $x \in \T^2$.
See Lemma~\ref{LEM:Wick1}
for the equivalence of~\eqref{L0} and~\eqref{L3}.

For $N \in \N$, let 
\begin{align}
G_N(u) =  \frac 1{2m} \int_{\T^2}:\! |\P_N u|^{2m}\!: dx.
\label{L4}
\end{align}

\noi
Then, we have the following proposition.

\begin{proposition}\label{PROP:WH1}
Let $m \geq2$ be an   integer.
Then, $\{G_N(u)\}_{N\in \N}$
is a Cauchy sequence
in $L^p( \mu)$ for any $p\geq 1$.
More precisely, 
there exists $C_{m} > 0$ such that 
\begin{align*}
\| G_M(u) - G_N(u) \|_{L^p(\mu)}
\leq C_{m} (p-1)^m\frac{1}{N^\frac{1}{2}}
\end{align*}

\noi
for 
 any $p \geq 1$ and 
any $ M \geq N \geq 1$.

\end{proposition}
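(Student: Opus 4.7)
The plan is to reduce the $L^p$ bound to an $L^2$ bound via hypercontractivity, compute the $L^2$ norm using orthogonality of complex Wick products, and bound the resulting sum by an iterated convolution estimate on $\Z^2$. By \eqref{L3} and \eqref{L1b}, $:\!|u_N|^{2m}\!:$ is a polynomial of total degree $2m$ in the independent complex Gaussians $\{g_n\}_{|n|\leq N}$ (and their conjugates), and the Wick ordering places it in the $2m$-th homogeneous Wiener chaos. Integration in $x$ and subtraction preserve this, so $G_M - G_N$ lies in the chaos of order $2m$. Applying the Wiener chaos estimate (Lemma~\ref{LEM:hyp3}), namely hypercontractivity of the Ornstein--Uhlenbeck semigroup in degree $2m$, yields
\begin{align*}
\|G_M - G_N\|_{L^p(\mu)} \leq (p-1)^m \|G_M - G_N\|_{L^2(\mu)} \qquad (p \geq 2),
\end{align*}
so it remains to prove $\|G_M - G_N\|_{L^2(\mu)} \leq C_m N^{-1/2}$.

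Next I would unfold the Wick ordering on the Fourier side. Using $u_N(x) = \sum_{|n|\leq N}\jb{n}^{-1} g_n e^{in\cdot x}$ and the fact that Wick ordering commutes with integration in $x$,
\begin{align*}
\int_{\T^2}\!:\!|u_N|^{2m}\!:\!dx = \sum_{\substack{n_1,\ldots,n_m,\, n'_1,\ldots,n'_m \in \Z^2 \\ |n_j|,\,|n'_j|\leq N \\ n_1+\cdots+n_m = n'_1+\cdots+n'_m}}\prod_{j=1}^m \frac{1}{\jb{n_j}\jb{n'_j}}\;:\!g_{n_1}\cdots g_{n_m}\bar g_{n'_1}\cdots\bar g_{n'_m}\!:,
\end{align*}
the momentum constraint coming from $\int_{\T^2} e^{i(\sum n_j - \sum n'_j)\cdot x}dx$. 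Because $\E[g_n\bar g_k]=\delta_{nk}$ and $\E[g_n g_k]=0$, orthogonality of complex Wick products forces the covariance of two such monomials to vanish unless their unprimed and primed index multisets each agree; when they do, the covariance is a sum of $(m!)^2$ pairings (with bounded corrections for index multiplicity). Subtracting the $N$-level sum from the $M$-level sum gives
\begin{align*}
\|G_M - G_N\|_{L^2(\mu)}^2 \leq C_m \sum_{\substack{|n_j|,\,|n'_j|\leq M \\ \max_j(|n_j|,|n'_j|) > N \\ \sum n_j = \sum n'_j}}\prod_{j=1}^m \frac{1}{\jb{n_j}^2\jb{n'_j}^2}.
\end{align*}

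To bound this sum I would pigeonhole on the single index $n_*$ with $|n_*|>N$ (losing a factor $2m$) and recognize the sum over the remaining $2m-1$ indices, subject to the momentum balance, as the convolution $\phi^{*(2m-1)}(n_*)$ with $\phi(n):=\jb{n}^{-2}$. The elementary iterated bound $\phi^{*k}(n)\leq C_k \jb{n}^{-2}(\log\jb{n})^{k-1}$ on $\Z^2$ --- proved inductively by splitting each convolution into $|m|\leq|n|/2$ and $|n-m|\leq|n|/2$ --- then gives
\begin{align*}
\|G_M-G_N\|_{L^2(\mu)}^2 \leq C_m \sum_{|n_*|>N}\frac{(\log\jb{n_*})^{2m-2}}{\jb{n_*}^4} \leq C_m \frac{(\log N)^{2m-2}}{N^2},
\end{align*}
comfortably better than the needed $N^{-1}$. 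Combined with hypercontractivity this yields the stated bound. The main obstacle is the combinatorial bookkeeping in the Wick expansion: verifying that $:\!|u_N|^{2m}\!:$ genuinely lives in the $2m$-th (and not merely up to $2m$-th) chaos, so that hypercontractivity produces the sharp exponent $m$ in $(p-1)^m$, and correctly counting pairings when the $n_j,n'_j$ have multiplicities. The convolution estimate itself is routine, but it is at this step that the planar geometry of $\T^2$ surfaces through the harmless logarithmic losses that are absorbed by the positive power of $N$.
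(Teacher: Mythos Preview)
Your argument is correct and leads to the same intermediate quantity as the paper, but the route is genuinely different. The paper avoids working on the Fourier side altogether: it introduces the white noise functional $W_f$ (Definition~\ref{DEF:W}) and the key orthogonality relation for Laguerre polynomials in Lemma~\ref{LEM:W1}, then writes $u_N(x)=\sigma_N^{1/2}\,\overline{W_{\eta_N(x)}}$ and computes directly on the physical side to obtain the exact identity
\[
(2m)^2\|G_M-G_N\|_{L^2(\mu)}^2=(m!)^2\int_{\T^2}\big[(\gamma_M(x))^{2m}-(\gamma_N(x))^{2m}\big]\,dx,
\]
which it then bounds by Cauchy--Schwarz and Hausdorff--Young (\eqref{W8b}, \eqref{W8c}). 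You instead expand $:\!|u_N|^{2m}\!:$ in Wick products of the $g_n$'s and invoke orthogonality of complex Wick monomials; after the pairing combinatorics you land on precisely the same frequency sum (the Fourier expansion of the integral above), and your iterated convolution bound $\phi^{*k}(n)\lesssim\jb{n}^{-2}(\log\jb{n})^{k-1}$ actually gives the sharper rate $(\log N)^{m-1}/N$ rather than $N^{-1/2}$.

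What each approach buys: your Fourier-side computation is self-contained and yields a better constant, but the Wick-product bookkeeping you flag as the ``main obstacle'' is exactly what the paper's machinery is designed to bypass (see Appendix~\ref{SEC:A} for the $m=3$ case done by hand). The white noise functional packaging pays off further in Section~\ref{SEC:3}, where the nonlinearity requires the generalized Laguerre polynomials $L^{(1)}_{m-1}$ and an analogue of Lemma~\ref{LEM:W1}, and in Section~\ref{SEC:mfd}, where the argument transfers to manifolds with essentially no change, whereas your convolution estimate is tied to the lattice structure of $\Z^2$. One small omission: you only treat $p\geq 2$; the range $1\leq p<2$ follows trivially from the $L^2$ bound by H\"older.
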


Proposition~\ref{PROP:WH1} states that we can define the limit $G(u)$ as
\begin{align*}
G(u)=\frac 1{2m} \int_{\T^2} :\! |u|^{2m}\!: dx
= \lim_{N \to \infty} G_N(u)
= \frac 1{2m}
\lim_{N \to \infty} 
\int_{\T^2}:\! |\P_N u|^{2m}\!: dx
\end{align*}

\noi
and that  $G(u) \in L^p( \mu)$
for any finite $p \geq  2$.
This allows us to define  the Wick ordered Hamiltonian:
\begin{align}
H_\text{\tiny Wick}(u) = \frac{1}{2}\int_{\T^2} |\nb u|^2dx
+ \frac1{2m} \int_{\T^2} :\! |u|^{2m}\!: dx
\label{Hamil2}
\end{align}

\noi
for an  integer $m \geq 2$.
In order to discuss the invariance property of the Gibbs measures, 
we need to  overcome the following two problems.
\begin{itemize}
\item[(i)] Define 
 the Gibbs measure of the form 
\begin{align}
\text{``}d \Pk = Z^{-1} e^{-H_{\tiny \text{Wick}}(u) - \frac 12 M(u)} du\text{''} , 
\label{Gibbs1}
 \end{align}

\noi
corresponding to the Wick ordered Hamiltonian $H_\text{\tiny Wick}$.

\item[(ii)] 
Make sense of 
the following defocusing Wick ordered NLS  on $\T^2$:
\begin{align}
i \dt u + \Dl  u = \, :\!|u|^{2(m-1)} u\!: \;, 
\qquad  (t,x) \in \R \times \T^2,
\label{NLS2}
\end{align}

\noi
arising as a Hamiltonian PDE: $\dis \dt u = -i \dd_{\cj u} H_\text{Wick}$. 
In particular, we need to give a precise meaning to 
the Wick ordered nonlinearity 
$:\!|u|^{2(m-1)} u\!:$.
\end{itemize}

Let us first discuss Part (i).
For $N \in \N$, 
let
\begin{equation*}
R_N(u) =  e^{-G_{N}(u)} = e^{-\frac 1{2m} \int_{\T^2} :|u_N|^{2m} : \, dx}
\end{equation*}

\noi
and define 
the truncated Gibbs measure $\Pkn $ by 
\begin{align}
d \Pkn := Z_N^{-1} R_N(u) d\mu = Z_N^{-1} e^{-\frac 1{2m} \int_{\T^2} :| u_N|^{2m} : \, dx} d\mu,
\label{L6}
 \end{align}

\noi
corresponding to the truncated Wick ordered Hamiltonian:
\begin{align}
H^N_\text{\tiny Wick}(u) = \frac{1}{2}\int_{\T^2} |\nb u|^2dx
+ \frac1{2m} \int_{\T^2} :\! |  u_N|^{2m}\!: dx.
\label{Hamil3}
\end{align}

\noi
Note that 
$\Pkn$ is  absolutely continuous with respect to the Gaussian free field $\mu$.

We have the following proposition
on the construction of the Gibbs measure $\Pk$ 
as a limit of $\Pkn$.

\begin{proposition}\label{PROP:Gibbs1}
Let $m \geq 2$ be an  integer.
Then, 
$R_N(u) \in L^p(\mu)$ for any $p\geq 1$
with a uniform bound in $N$, depending on $p \geq 1$.
Moreover, for any finite $p \geq 1$, 
$R_N(u)$ converges to some $R(u)$ in $L^p(\mu)$ as $N \to \infty$.
\end{proposition}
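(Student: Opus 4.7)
\medskip
\noindent\textbf{Proof plan.}
The plan is to first establish a uniform $L^p(\mu)$ bound on $R_N$ for every finite $p\geq 1$ and then combine it with Proposition~\ref{PROP:WH1} to deduce $L^p$ convergence. The uniform bound is the substantive step; once one has it, convergence follows from a soft functional-analytic argument.

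\medskip
\noindent\textbf{Step 1 (uniform $L^p$ bound).} The aim is to show that $\sup_N \|e^{-G_N(u)}\|_{L^p(\mu)} < \infty$ for every $p\geq 1$. Note that $G_N$ is sign-indefinite (after Wick renormalization), so an a~priori $L^p$ bound on $G_N$ alone is insufficient. Writing
\begin{align*}
\int e^{-pG_N(u)}\,d\mu \;=\; 1 + \int_0^\infty p\,e^{p\lambda}\,\mu\bigl(G_N(u) < -\lambda\bigr)\,d\lambda,
\end{align*}
one sees that a tail bound of Gaussian or better type is needed. The bound coming from Proposition~\ref{PROP:WH1} plus Chebyshev gives only $\mu(G_N < -\lambda) \lesssim \exp(-c\lambda^{1/m})$, which is too weak for $m\geq 2$. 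Thus, one has to prove a Nelson-type large deviation estimate
\begin{align*}
\mu\bigl(G_N(u) < -\lambda\bigr) \;\leq\; C\exp\bigl(-c\lambda^{\beta}\bigr)
\end{align*}
for some $\beta > 1$, uniformly in $N$. The standard route is a frequency truncation: fix a parameter $K = K(\lambda) \ll N$, split $\P_N u = \P_K u + (\P_N-\P_K) u =: v + w$, and expand $:\!|v+w|^{2m}\!:$ into terms of the form $:\!|w|^{2j}\!:v^\alpha \bar v^\beta$ using the product formulas for Wick monomials (which follow from the Hermite/Laguerre algebra recalled in Section~\ref{SEC:2}). The positive term $|v|^{2m}$ is the leading one, and one bounds all mixed terms via Young's inequality in such a way that they are absorbed into $\int|v|^{2m}\,dx$ plus harmless contributions proportional to $\sigma_N^j$ and $\int :\!|w|^{2j}\!:dx$ with $j<m$. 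The latter are controlled in $L^p(\mu)$ by hypercontractivity, yielding the desired stretched-exponential tail. This is the main obstacle, essentially because $G_N$ is not bounded below deterministically and all estimates must be uniform in $N$.

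\medskip
\noindent\textbf{Step 2 (a.s. convergence along a subsequence).} From Proposition~\ref{PROP:WH1}, $\{G_N\}$ is Cauchy in $L^2(\mu)$ with an explicit polynomial rate, so $G_N \to G$ in $L^2(\mu)$. Choosing a sufficiently fast subsequence $N_k$ (e.g.\ $N_k = 2^k$) so that $\sum_k \|G_{N_{k+1}} - G_{N_k}\|_{L^2(\mu)} < \infty$, one obtains $G_{N_k}(u) \to G(u)$ $\mu$-a.s., and hence $R_{N_k}(u) = e^{-G_{N_k}(u)} \to e^{-G(u)} =: R(u)$ $\mu$-a.s.

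\medskip
\noindent\textbf{Step 3 ($L^p$ convergence).} Given the uniform bound $\sup_N \|R_N\|_{L^{2p}(\mu)} < \infty$ from Step~1, the family $\{|R_N|^p\}$ is uniformly integrable. Combined with the a.s.\ convergence of Step~2, Vitali's convergence theorem yields $R_{N_k} \to R$ in $L^p(\mu)$. Applying the same argument to any subsequence of $\{R_N\}$ produces a further subsequence converging to the same limit $R$ in $L^p(\mu)$; by a standard subsubsequence argument, the full sequence $R_N$ converges to $R$ in $L^p(\mu)$, and $R \in L^p(\mu)$ by Fatou.
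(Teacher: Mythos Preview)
Your overall strategy is sound and would work, but your Step~1 takes a considerably harder road than the paper's, and the reason is a small misconception: you write that ``$G_N$ is not bounded below deterministically.'' In fact it is. Since $(-1)^m L_m(x)\geq -a_m$ for some finite $a_m$, the definition of the Wick ordering via Laguerre polynomials gives the pointwise bound
\[
-\,G_N(u)\;=\;-\frac{1}{2m}\int_{\T^2}:\!|u_N|^{2m}\!:\,dx\;\le\;b_m\,\sigma_N^{\,m}\;\le\;b_m(\log N)^m,
\]
valid for \emph{every} $u$. The bound is not uniform in $N$, but it grows only like a power of $\log N$, and this is exactly what makes the Nelson estimate short. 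Given $\lambda>0$, pick $N_0$ with $\lambda=2pb_m(\log N_0)^m$, i.e.\ $N_0^{1/(2m)}=e^{c\lambda^{1/m}}$. For $N<N_0$ the deterministic bound already gives $\mu(-pG_N>\lambda)=0$. For $N\ge N_0$ write $-pG_N=(-pG_N+pG_{N_0})-pG_{N_0}$ and use the deterministic bound on the second piece together with the Cauchy estimate of Proposition~\ref{PROP:WH1} (optimized in $p$) on the first; this yields $\mu(-pG_N>\lambda)\le C\exp(-c\,e^{c'\lambda^{1/m}}\lambda^{1/m})$, which is far better than exponential. No Wick-product expansion of $:\!|v+w|^{2m}\!:$ is needed.

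Your proposed route---splitting $\P_N u=\P_K u+(\P_N-\P_K)u$, expanding the Wick monomial, isolating $|v|^{2m}$, and absorbing cross terms by Young plus hypercontractivity---is the classical constructive-QFT argument and certainly works, but it requires the binomial formula for Wick powers of sums of independent Gaussians and careful bookkeeping of the $\sigma_K$-dependent coefficients. The paper's argument replaces all of that by the single observation $G_{N_0}(u)\ge -\tfrac{\lambda}{2p}$, which is already contained in Proposition~\ref{PROP:WH1}. Your Steps~2--3 are fine; the paper proceeds slightly differently (convergence of $G_N$ in $L^p$ gives convergence of $R_N$ in measure, and then the uniform $L^{2p}$ bound plus Cauchy--Schwarz on $\mathbf 1_{A_{N,\varepsilon}^c}$ does the job), but Vitali plus a subsubsequence argument is an equivalent packaging.
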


In particular,  
by writing the limit $R(u)\in L^p(\mu)$ as 
\[ R(u) =  e^{-\frac 1{2m} \int_{\T^2} :|u|^{2m} : \, dx},\]

\noi
Proposition~\ref{PROP:Gibbs1}
allows us to define the Gibbs measure $\Pk$  in~\eqref{Gibbs1}  by 
\begin{align}
d \Pk =Z^{-1} R(u) d\mu =  Z^{-1}  e^{-\frac 1{2m} \int_{\T^2} :|u|^{2m} : \,dx} d\mu.
\label{Gibbs2}
 \end{align}

\noi
Then,  $\Pk$  is a probability measure on $H^s(\T^2)$, $s< 0$,
absolutely continuous to the Gaussian field $\mu$.
Moreover, $\Pkn$ converges weakly to $\Pk$.

\medskip


\subsection{Invariant dynamics for the Wick ordered NLS}

In this subsection, we study the dynamical problem
\eqref{NLS2}.
First, we consider the Hamiltonian PDE corresponding to 
the truncated Wick ordered Hamiltonian
$H^N_\text{\tiny Wick}$ in~\eqref{Hamil3}:
\begin{align}
i \dt u^N + \Dl  u^N = \P_N\big(\! :\!|\P_Nu^N|^{2(m-1)} \P_Nu^N\!: \!\big).
\label{NLS3}
\end{align}

\noi
The high frequency part $\P_N^\perp u^N$
evolves according to the linear flow,
while the low frequency part $\P_N u^N$
evolves according to the finite dimensional system of
ODEs viewed on the Fourier side. 
Here,  $\P_N^\perp $ is the Dirichlet projection onto the high frequencies $\{|n|> N\}$.

Let $\mu = \mu_N \otimes \mu_N^\perp$, 
where $\mu_N$  
and $\mu_N^\perp$ are
the marginals of $\mu$ on $E_N = \text{span}\{ e^{in\cdot x}\}_{|n|\leq N}$
and 
$E_N^\perp = \text{span}\{ e^{in\cdot x}\}_{|n|> N}$, respectively.
Then,  we can write $\Pkn$ in~\eqref{L6} as 
\begin{align}
\Pkn = \ft P^{(2m)}_{2, N}\otimes \mu^\perp_N,
\label{NGibbs1}
\end{align} 

\noi
where 
$\ft P^{(2m)}_{2, N}$ is the finite dimensional Gibbs measure defined by 
\begin{align}
d\ft P^{(2m)}_{2, N} = \ft Z_N^{-1} e^{-\frac 1{2m} \int_{\T^2} \, :|\P_N u^N|^{2m} : \, dx} d\mu_N.
\label{NGibbs2} 
\end{align}

\noi
Then, it is easy to see that $\Pkn$
is invariant under the dynamics of~\eqref{NLS3};
see  Lemma~\ref{LEM:global} below.
In particular, 
 the law of $u^N(t)$ is given by $\Pkn$
for any $t \in \R$.

For $N \in \N$, define  $F_N (u)$ by 
\begin{equation}
 F_N(u) = \P_N\big(\! :\!|\P_Nu|^{{2(m-1)}} \P_Nu\!: \!\big).
\label{nonlin2}
 \end{equation}

\noi
Then, assuming that $u$ is distributed according to the Gaussian free field $\mu$
in~\eqref{G3}, 
the following proposition lets us make sense of the Wick ordered
nonlinearity $:\!|u|^{{2(m-1)}} u\!:  $ in~\eqref{NLS2}
as the limit of $F_N(u)$.

\begin{proposition}
\label{PROP:nonlin}
Let $m \geq2$ be an   integer and $s< 0$.
Then, $\{F_N(u)\}_{N\in \N}$
is a Cauchy sequence
in $L^p(\mu;  H^s(\T^2))$ for any $p\geq 1$.
More precisely, 
 given $\eps >0 $ with $s + \eps < 0$, 
there exists $C_{m, s, \eps} > 0$ such that 
\begin{align}
\big\|\| F_M(u) - F_N(u) \|_{H^s}\big\|_{L^p(\mu)}
\leq C_{m, s, \eps} (p-1)^{m- \frac{1}{2}}\frac{1}{N^\eps}
\label{nonlin1}
\end{align}

\noi
for 
 any $p \geq 1$ and 
any $ M \geq N \geq 1$.

\end{proposition}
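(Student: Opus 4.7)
The plan is to mirror the strategy that (by the earlier discussion) yields Proposition~\ref{PROP:WH1}, adapting it to the nonlinearity. The key structural fact is that, by Wick ordering, the random field $F_N(u)$ is frequency-localized to $\{|n|\leq N\}$ and, at each fixed frequency $n$, lies in the homogeneous Wiener chaos of order $2m-1$. This enables the Wiener chaos estimate (Lemma~\ref{LEM:hyp3}) to reduce the $L^p(\mu)$-bound to the $L^2(\mu)$-bound, at the cost of a factor $(p-1)^{(2m-1)/2} = (p-1)^{m-\frac{1}{2}}$, which is precisely the factor appearing in~\eqref{nonlin1}.

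First I would derive an explicit Fourier representation for the Wick ordered nonlinearity. Using the Laguerre representation~\eqref{L3} and the generating function identity~\eqref{L1} differentiated in $\bar{u}$, one obtains
\begin{align*}
:\!|\P_N u|^{2(m-1)} \P_N u\!: \, = (-1)^{m-1}(m-1)! \cdot \P_N u \cdot L_{m-1}(|\P_N u|^2; \s_N).
\end{align*}
Combining this with~\eqref{G5} gives
\begin{align*}
\widehat{F_N(u)}(n) = c_m \sum_{\Gamma_N(n)} \frac{g_{n_1}\cdots g_{n_m}\,\overline{g_{k_1}}\cdots \overline{g_{k_{m-1}}}}{\jb{n_1}\cdots \jb{n_m}\jb{k_1}\cdots \jb{k_{m-1}}}, \qquad |n|\leq N,
\end{align*}
where $\Gamma_N(n)$ is the set of tuples $(n_1,\dots,n_m,k_1,\dots,k_{m-1})\in(\Z^2)^{2m-1}$ with $|n_j|,|k_\ell|\leq N$, $\sum_j n_j - \sum_\ell k_\ell = n$, and the no-pairing (Wick) restriction $\{n_j\}\cap\{k_\ell\}=\emptyset$ as multisets. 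A telescoping decomposition
$F_M - F_N = \P_N\bigl(:\!|\P_M u|^{2(m-1)}\P_M u\!:-:\!|\P_N u|^{2(m-1)}\P_N u\!:\bigr) + (\P_M-\P_N)\bigl(:\!|\P_M u|^{2(m-1)}\P_M u\!:\bigr)$
expresses $\widehat{(F_M-F_N)}(n)$ as a sum, over $\Gamma_M(n)\setminus\Gamma_N(n)$, of monomials of the same form; the latter index set forces at least one frequency $n_j$ or $k_\ell$ to have modulus $>N$.

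Next, applying Lemma~\ref{LEM:hyp3} coefficient-wise (each $\widehat{(F_M-F_N)}(n)$ lies in the chaos of order $2m-1$) and then Minkowski's inequality in $L^p(\mu)$ for $p\geq 2$,
\begin{align*}
\big\|\|F_M(u) - F_N(u)\|_{H^s}\big\|_{L^p(\mu)}
\leq (p-1)^{m-\frac{1}{2}} \Bigl(\sum_{n\in\Z^2}\jb{n}^{2s}\bigl\|\widehat{(F_M-F_N)}(n)\bigr\|_{L^2(\mu)}^2\Bigr)^{1/2}.
\end{align*}
Exploiting the orthogonality of the Gaussian monomials in $L^2(\mu)$ together with the no-pairing restriction reduces the $L^2$-norm to a combinatorial count over pairings of the $n_j$'s (respectively $k_\ell$'s), producing
\begin{align*}
\bigl\|\widehat{(F_M-F_N)}(n)\bigr\|_{L^2(\mu)}^2 \les_m
\sum_{\substack{\sum_j n_j-\sum_\ell k_\ell = n \\ \max(|n_j|,|k_\ell|) > N}}
\prod_{j=1}^{m}\frac{1}{\jb{n_j}^2}\prod_{\ell=1}^{m-1}\frac{1}{\jb{k_\ell}^2}.
\end{align*}

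The main technical obstacle is the multilinear convolution estimate for this right-hand side. One can iterate the two-dimensional bound $\sum_{a+b=c}\jb{a}^{-2}\jb{b}^{-2}\les\jb{c}^{-2+\eps}$ (with small log-losses absorbed by enlarging $\eps$), and use the high-frequency restriction $\max(|n_j|,|k_\ell|)>N$ to extract a gain $N^{-2\eps}$. This should yield a bound of order $\jb{n}^{2\eps}N^{-2\eps}$, and summing $\sum_n \jb{n}^{2s+2\eps}$ converges exactly when $s+\eps<0$, giving the claimed inequality with constant depending on $m,s,\eps$. The delicacy is that in two dimensions each individual convolution is logarithmically divergent without renormalization, so one must carefully distribute the $\eps$-losses across the $2m-1$ convolutions and verify that the no-pairing constraint (needed for orthogonality) does not affect the bookkeeping beyond constants depending on $m$.
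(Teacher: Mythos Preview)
Your overall architecture---reduce from $L^p$ to $L^2$ via hypercontractivity on the chaos of order $2m-1$, then bound each Fourier coefficient in $L^2(\mu)$---is exactly the paper's. But two steps are not right as written.

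First, the ``no-pairing'' Fourier representation is incorrect. The Wick ordered nonlinearity is \emph{not} the restricted sum $\sum_{\{n_j\}\cap\{k_\l\}=\emptyset}(\cdots)$; for instance when $m=2$ one has $:|u_N|^2 u_N:\,=|u_N|^2 u_N-2\s_N u_N$, and $\s_N=\sum_{|n|\le N}\jb{n}^{-2}$ is a deterministic constant, not the random $\sum|g_n|^2\jb{n}^{-2}$ that removing the diagonal would produce. (Relatedly, the polynomial appearing after differentiating $:|u_N|^{2m}:$ in $\bar u_N$ is the \emph{generalized} Laguerre polynomial $L^{(1)}_{m-1}$, not $L_{m-1}$; cf.\ \eqref{T2}--\eqref{T3}.) What is true, and what you need, is that $\widehat{F_N(u)}(n)$ lies in the homogeneous chaos of order $2m-1$; the $L^2$-norm then follows from the orthogonality of Wick products, but your justification via a nonexistent no-pairing formula does not establish this. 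The paper sidesteps the combinatorics entirely: it writes $u_N(x)=\s_N^{1/2}\,\overline{W_{\eta_N(x)}}$ via the white noise functional and uses the single identity
\[
\E\big[L^{(1)}_k(|W_f|^2)W_f\,\overline{L^{(1)}_\ell(|W_h|^2)W_h}\big]=\dl_{k\ell}(k+1)|\jb{f,h}|^{2k}\jb{f,h}
\]
(Lemma~\ref{LEM:Z1}) to obtain directly $\|\jb{F_N(u),e_n}\|_{L^2(\mu)}^2=C_m\,\F[|\g_N|^{2m-2}\g_N](n)$.

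Second, your convolution bound drops the crucial decay in $n$. Iterating $\sum_{a+b=c}\jb{a}^{-2}\jb{b}^{-2}\les\jb{c}^{-2+}$ over $2m-1$ factors on $\Z^2$ gives $\les\jb{n}^{-2+\theta}$ (any $\theta>0$), not merely $\jb{n}^{2\eps}$; and with the high-frequency restriction one gets $N^{-2\eps}\jb{n}^{-2+2\theta+2\eps}$ (cf.\ \eqref{Z8} and the estimate following \eqref{Z9}). Consequently the sum to control is $\sum_{n\in\Z^2}\jb{n}^{2s-2+2\theta+2\eps}$, which converges iff $s+\theta+\eps<0$---consistent with the hypothesis $s+\eps<0$ once $\theta$ is chosen small. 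Your stated sum $\sum_n\jb{n}^{2s+2\eps}$ would require $s+\eps<-1$ in two dimensions, so the argument as written does not close.
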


\noi
In the real-valued setting, 
the nonlinearity corresponding to the Wick ordered Hamiltonian
is again given by a Hermite polynomial.
Indeed, from~\eqref{Wick1}, we have
\[
\tfrac{1}{k} \dd_{u_N}\big(:\! u_N^k \!: \big)=
\tfrac{1}{k}\dd_{u_N}
 H_k(u_N; \s_N)
 = H_{k-1}(u_N; \s_N),
\]

\noi
since $\dx H_k(x; \rho) = k H_{k-1}(x; \rho)$; see~\eqref{H1b}.
The situation is slightly different  in the complex-valued setting.
In the proof of Proposition~\ref{PROP:nonlin}, 
the generalized Laguerre polynomials $L^{(\al)}_m (x)$
with $\al = 1$ plays an important role.
See Section~\ref{SEC:3}.

We denote the limit by $F(u) = \,\, :\! \!|u|^{2(m-1)} u\!:  $
and consider the Wick ordered NLS~\eqref{NLS2}.
When $m= 2$, 
Bourgain~\cite{BO96} 
constructed 
almost sure global-in-time strong solutions
and proved the invariance of the Gibbs measure $P^{(4)}_2$
for the defocusing cubic Wick ordered NLS. See Remark~\ref{REM:NLS} below.
The main novelty in~\cite{BO96} was to construct local-in-time dynamics
in a probabilistic manner, exploiting the gain of integrability
for the random rough linear solution.
By a similar approach,  
Burq-Tzvetkov~\cite{BT1, BT2}
constructed
almost sure global-in-time strong solutions
and proved the invariance of the Gibbs measure
for the defocusing 
subquintic nonlinear wave equation (NLW)
posed on the three-dimensional ball
in the radial setting.

On the one hand, when $m=2$,  
there is only an $\eps$-gap between the regularity of 
the support $H^{s}(\T^2)$, $s< 0$,  of the Gibbs measure
$P^{(4)}_2$ and the scaling criticality $s = 0$
(and the regularity $s>0$ of the known deterministic local well-posedness~\cite{BO93}).
On the other hand, when $m \geq 3$, 
the gap between the regularity of the Gibbs measure $\Pk$
and the scaling criticality 
is slightly more than  $1 - \frac{1}{m-1} \geq \frac 12$. 
At present, it seems very difficult to close this gap
and to construct strong solutions
even in a probabilistic setting.

In the following, 
we instead follow the approach presented
in the work~\cite{BTT1} by the second author with Burq and Tzvetkov.
This work, in turn, was motivated
by the works of Albeverio-Cruzeiro~\cite{AC}
and Da Prato-Debussche~\cite{DPD}
in the study of fluids.
The main idea is to exploit the invariance
of the truncated Gibbs measures $\Pkn$
for~\eqref{NLS3}, 
then to construct 
global-in-time {\it weak} solutions for 
the Wick ordered NLS~\eqref{NLS2},
and finally to prove the invariance
of the Gibbs measure $\Pk$ in some mild sense.

Now, we are ready to state our main theorem.

\begin{theorem}\label{THM:1}
Let $m \geq 2$ be an   integer.
Then, there exists a set $\Si$ of full measure
with respect to $\Pk$
such that for every $\phi \in \Si$, 
the Wick ordered NLS~\eqref{NLS2} with initial condition $u(0) = \phi$
has a global-in-time solution 
\[ u \in C(\R; H^s(\T^2))\]

\noi
for any $s < 0$.
Moreover, for all $t \in \R$, 
the law of the random function $u(t)$ is given by~$\Pk$.
\end{theorem}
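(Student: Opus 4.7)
The plan is to follow the Bourgain/Albeverio--Cruzeiro/Da Prato--Debussche/Burq--Tzvetkov--Thomann compactness scheme sketched in the discussion preceding the theorem. First, for each $N$ one notes that the truncated equation \eqref{NLS3} splits into a linear flow on $\P_N^\perp L^2$ and a finite-dimensional Hamiltonian ODE system on $E_N = \P_N L^2$. Liouville's theorem on $E_N$ together with conservation of mass and of the truncated Wick Hamiltonian $H^N_{\text{\tiny Wick}}$ and the product structure \eqref{NGibbs1} yield the invariance of $P_{2,N}^{(2m)}$ under the flow $\Phi_N(t)$ of \eqref{NLS3} (this is Lemma~\ref{LEM:global}). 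Global existence and the uniform-in-$N$ bound $\|u^N(t)\|_{H^s}\in L^p(\Pkn)$ for any $s<0$ and any $p<\infty$ follow as well, with a bound independent of $t$ by invariance.

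Next, I would view each trajectory $u^N$ as a random variable in $X_T:=C([-T,T];H^s(\T^2))$ with $s<0$ and consider its law $\nu_N^T$. The goal is to prove tightness of $\{\nu_N^T\}_N$ on $X_T$. By Proposition~\ref{PROP:nonlin} and the a.s.\ bound on $\|u^N(t)\|_{H^s}$, the Duhamel formulation
\begin{equation*}
u^N(t) = S(t)\phi^N - i\int_0^t S(t-t')F_N(u^N(t'))\,dt'
\end{equation*}
gives a uniform estimate of $\|u^N\|_{C^\al_t H^{s-\eps}}$ in $L^p$ for some $\al>0$, combined with the uniform $L^p$ bound in $H^s$; the compact embedding $C^\al_t H^{s-\eps/2}\cap L^\infty_t H^s \hookrightarrow C_t H^{s-\eps}$ then yields tightness via a standard Arzel\`a--Ascoli criterion (Aubin--Lions). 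By Prokhorov a subsequence $\nu_{N_k}^T$ converges weakly to some $\nu^T$, and by Skorokhod's representation theorem there is a new probability space carrying variables $\wt u^{N_k}$ with the same laws as $u^{N_k}$ and a limit $\wt u$ with $\wt u^{N_k}\to \wt u$ almost surely in $X_T$.

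The main obstacle is to identify $\wt u$ as a solution of the limiting Wick-ordered NLS \eqref{NLS2}, since the nonlinearity itself depends on $N$. To do so, I would combine the a.s.\ convergence $\wt u^{N_k}\to \wt u$ in $C_t H^s$ with the fact that, because the time-$t$ marginal of $\wt u^{N_k}$ is $P_{2,N_k}^{(2m)}$, Proposition~\ref{PROP:nonlin} together with Proposition~\ref{PROP:Gibbs1} lets one show that $F_{N_k}(\wt u^{N_k}(t))$ converges in $L^p(d\P\otimes dt;H^s)$ to $F(\wt u(t))$, where $F(u)= {:}|u|^{2(m-1)}u{:}$ is the limiting object constructed in Proposition~\ref{PROP:nonlin}. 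Passing to the limit in the Duhamel formula then gives that $\wt u$ solves \eqref{NLS2} in the mild sense on $[-T,T]$ on the new probability space. Since laws are preserved, the original sequence $u^{N_k}$ admits a limit $u$ (defined on the original space via its law) solving \eqref{NLS2}, and the time-$t$ marginal of this limit is the weak limit of $P_{2,N_k}^{(2m)}$, which by Proposition~\ref{PROP:Gibbs1} equals $P_2^{(2m)}$.

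Finally, to pass from $[-T,T]$ to all of $\R$ and to produce the exceptional set $\Si$, I would apply a diagonal argument letting $T\to\infty$ along an increasing sequence of times, extract a further subsequence so that the limit is defined on $\R$, and let $\Si\subset H^s(\T^2)$ be the set of initial data $\phi$ for which the above construction yields a trajectory $u\in C(\R;H^s)$. The invariance statement of the theorem, namely that $\mathrm{Law}(u(t))=P_2^{(2m)}$ for every $t\in\R$, is then automatic from the invariance at the truncated level and weak convergence of $P_{2,N_k}^{(2m)}\to P_2^{(2m)}$, applied at each fixed time. The set $\Si$ has full $P_2^{(2m)}$-measure because $P_2^{(2m)}(\Si^c)=\lim_{k}P_{2,N_k}^{(2m)}(\Si^c)=0$, using that for each $N_k$ the truncated flow exists globally almost surely.
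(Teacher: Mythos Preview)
Your overall strategy matches the paper's: invariance of $\Pkn$ under the truncated flow, tightness of the push-forward measures $\nu_N$ on path space via a priori bounds coming from invariance, Prokhorov and Skorokhod to extract an almost sure limit $\wt u$ on a new probability space, identification of the time-$t$ marginal as $\Pk$, and identification of $\wt u$ as a distributional solution. One cosmetic difference is that the paper proves tightness directly on $C(\R;H^s)$ with the compact-open topology (using $\|\dt u\|_{L^p_TH^{s-2}}$ rather than Duhamel, and an interpolation lemma from \cite{BTT1}), which avoids the diagonal-in-$T$ step you describe; either route works.

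There are, however, two places where your argument as written has a gap. First, the passage ``$F_{N_k}(\wt u^{N_k}(t))\to F(\wt u(t))$ in $L^p$'' needs more than Propositions~\ref{PROP:Gibbs1} and~\ref{PROP:nonlin}: the limiting nonlinearity $F$ is defined only $\mu$-almost surely, so $F(\wt u(t))$ has no meaning until you know $\L(\wt u(t))\ll\mu$, and the convergence $F_N\to F$ in $L^p(\mu;H^s)$ does not by itself interact with the a.s.\ convergence $\wt u^{N_k}\to\wt u$ in $H^s$. The paper handles this via the four-term splitting
\[
F_{N_k}(\wt u^{N_k})-F(\wt u)=\big(F_{N_k}-F\big)(\wt u^{N_k})+\big(F-F_M\big)(\wt u^{N_k})+\big(F_M(\wt u^{N_k})-F_M(\wt u)\big)+\big(F_M-F\big)(\wt u),
\]
where the third term uses continuity of the \emph{finite-dimensional} $F_M$ together with a.s.\ convergence, and the remaining terms use Proposition~\ref{PROP:nonlin} through the identification of the time-$t$ marginals (which must therefore be established \emph{before} this step, as in the paper's Lemma~5.8). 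Second, your argument that $\Si$ has full measure, namely $\Pk(\Si^c)=\lim_k P^{(2m)}_{2,N_k}(\Si^c)=0$, is incorrect: weak convergence does not give equality of measures on an arbitrary set, and the global existence of the $N_k$-truncated flow says nothing about $\Si$, which is defined through the limiting construction. The correct (and simpler) argument is that on the Skorokhod space one has $\wt P(\wt u\text{ solves \eqref{NLS2}})=1$ and $\L(\wt u(0))=\Pk$; taking $\Si$ to be the image of this full-measure event under $\wt u(0)$ gives $\Pk(\Si)=1$ directly.
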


There are two components in Theorem~\ref{THM:1}:
existence of solutions and invariance of $\Pk$.
A precursor to the existence part of Theorem~\ref{THM:1}
appears in~\cite{BTT0}.
In~\cite{BTT0}, 
 the second author with Burq and Tzvetkov
used the energy conservation and a regularization 
property under randomization
to construct global-in-time solutions
to the cubic NLW on $\T^d$ for $d \geq 3$.
The main ingredient in~\cite{BTT0} is the compactness of
the solutions to the approximating PDEs.
In order to prove Theorem~\ref{THM:1}, 
we instead follow the argument in~\cite{BTT1}.
Here, the main ingredient
is the tightness (= compactness)
of measures on space-time functions, emanating
from the truncated Gibbs measure $\Pkn $
and Skorokhod's theorem (see Lemma~\ref{LEM:Sk} below).
We point out that 
Theorem~\ref{THM:1}
states only the existence of a global-in-time solution $u$ 
without uniqueness.

Theorem~\ref{THM:1} only claims
that the law $\L(u(t))$ of the $H^s$-valued random variable $u(t)$
satisfies
\[ \L(u(t)) = \Pk\]

\noi
for any $t \in \R$.
This implies the invariance property of the Gibbs measure $\Pk$
in some mild sense, but 
it is weaker than the actual invariance in the sense of~\eqref{inv1}.

\medskip

In fact, 
the result of Theorem~\ref{THM:1} remains true in a more general setting. 
Let~$(\M,g)$ be  a two-dimensional compact Riemannian manifold without boundary or a bounded domain in $\R^2$.
We consider the equation~\eqref{MNLS} on $\M$ (when $\M$ is a domain in $\R^2$, we impose the Dirichlet or Neumann boundary condition). 
Assume that  $k=2m$ for some integer $m\geq 2$. 
  In Section~\ref{SEC:mfd}, 
we prove the analogues
of Propositions~\ref{PROP:WH1},~\ref{PROP:Gibbs1},  and~\ref{PROP:nonlin} 
in this geometric setting, 
by incorporating the geometric information
such as the eigenfunction estimates.
In particular, it is worthwhile to note that 
the variance parameter $\s_N$ in \eqref{Wick4}
now depends on $x \in \M$ in this geometric setting
and more care is needed.
Once we establish 
 the analogues
of Propositions~\ref{PROP:WH1},~\ref{PROP:Gibbs1},  and~\ref{PROP:nonlin},
we can proceed as in the flat torus case. 
Namely, these propositions
 allow us to 
define a renormalized Hamiltonian:
\begin{equation*}
H_\text{\tiny Wick}(u) = \frac{1}{2}\int_{\M} |\nb u|^2dx
+ \frac1{2m} \int_{\M} :\! |u|^{2m}\!: dx,
\end{equation*}

\noi
and a Gibbs measure $\Pk$ as in~\eqref{Gibbs1}. Moreover, we are able to give a sense to NLS with a Wick ordered nonlinearity:
\begin{align}
\begin{cases}
i \dt u + \Dl_g  u = \,\,  : \!|u|^{2(m-1)}u \! : \\
u|_{t = 0} = \phi,
\end{cases}
\qquad  (t,x) \in \R \times \M.
\label{gNLS2}
\end{align}
\medskip 

%
%

%
%

In this general setting,  we have the following result.

\begin{theorem}\label{THM:2}
Let $m \geq 2$ be an   integer.
Then, there exists a set $\Si$ of full measure
with respect to $\Pk$
such that for every $\phi \in \Si$, 
the Wick ordered NLS~\eqref{gNLS2} with initial condition $u(0) = \phi$
has a global-in-time solution 
\[ u \in C(\R; H^s(\M))\]

\noi
for any $s < 0$.
Moreover, for all $t \in \R$, 
the law of the random function $u(t)$ is given by~$\Pk$.
\end{theorem}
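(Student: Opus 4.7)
The strategy is to mirror the proof of Theorem~\ref{THM:1}, with the analogues of Propositions~\ref{PROP:WH1}, \ref{PROP:Gibbs1}, and~\ref{PROP:nonlin} (established in Section~\ref{SEC:mfd}) as the main analytic inputs. Let $\{\varphi_n\}_{n\in\N}$ be an $L^2(\M)$-orthonormal basis of eigenfunctions of $-\Dl_g$ (with the imposed Dirichlet/Neumann boundary condition in the domain case), with eigenvalues $\ld_n^2$, and let $\P_N$ be the spectral projector onto $\{\ld_n \leq N\}$. The Gaussian free field is then realized as
\[
u(\omega) = \sum_n \frac{g_n(\omega)}{\sqrt{1+\ld_n^2}}\,\varphi_n,
\]
a Gaussian measure $\mu$ on $H^s(\M)$ for every $s<0$, and the now $x$-dependent variance $\s_N(x)=\sum_{\ld_n\leq N}|\varphi_n(x)|^2/(1+\ld_n^2)$ replaces~\eqref{Wick4} in the Wick construction. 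Granted Section~\ref{SEC:mfd}, we obtain the Gibbs measure $\Pk$ as a weak limit of $\Pkn$ and the convergence $F_N(u)\to F(u)=\,:\!|u|^{2(m-1)}u\!:$ in $L^p(\mu;H^s(\M))$.

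I would next introduce the truncated flow $\Phi^N(t)$ associated to
\[
i\dt u^N + \Dl_g u^N = \P_N\big(\!:\!|\P_N u^N|^{2(m-1)}\P_N u^N\!:\!\big),
\]
which decouples into a finite-dimensional Hamiltonian ODE on $\P_N L^2(\M)$ and a linear flow on its orthogonal complement. Liouville together with the conservation of $H^N_{\text{\tiny Wick}}$ and mass yields the invariance of $\Pkn=\ft P^{(2m)}_{2,N}\otimes\mu_N^\perp$ under $\Phi^N(t)$, so that launching initial data from $\Pkn$ produces a stochastic process $u^N$ with $\L(u^N(t))=\Pkn$ for every $t\in\R$.

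The heart of the argument is a compactness-and-Skorokhod extraction in the spirit of~\cite{BTT1}. Fix $T>0$ and $s'<s<0$. Using Duhamel and the invariance of $\Pkn$ together with Proposition~\ref{PROP:nonlin}'s manifold analogue at regularity $s'$, one derives an equicontinuity estimate
\[
\E\big[\|u^N(t+h) - u^N(t)\|_{H^{s'}(\M)}^p\big] \les |h|^{\alpha p},
\]
uniformly in $N$, for some $\alpha>0$ and every $p\geq 1$. Combined with the tightness of $\{\Pkn\}$ on $H^s(\M)$ (from Proposition~\ref{PROP:Gibbs1}'s analogue), a Kolmogorov/Aldous criterion yields tightness of the laws $\nu_N$ of $u^N$ on $C([-T,T];H^s(\M))$. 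Prokhorov and Skorokhod (Lemma~\ref{LEM:Sk}) then provide a new probability space carrying $\wt u^N\to\wt u$ almost surely in $C([-T,T];H^s(\M))$ with $\L(\wt u^N)=\nu_N$. Identifying $\wt u$ as a solution of~\eqref{gNLS2} is done by splitting $F_N(\wt u^N)-F(\wt u)$ across an auxiliary fixed $N_0$: the terms $F_N(\wt u^N)-F_{N_0}(\wt u^N)$ and $F_{N_0}(\wt u)-F(\wt u)$ are controlled in probability using Proposition~\ref{PROP:nonlin}'s manifold analogue together with the uniform $L^p(\mu)$-bound on the density of $\Pkn$, while $F_{N_0}(\wt u^N)-F_{N_0}(\wt u)$ vanishes by almost-sure convergence of $\wt u^N$ and continuity of the polynomial $F_{N_0}$. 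The weak convergence $\Pkn\to\Pk$ then gives $\L(\wt u(t))=\Pk$ for every $t\in[-T,T]$, and a diagonal extraction over $T\to\infty$ yields the global-in-time statement and the full-$\Pk$-measure set $\Si$.

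The main obstacle is the passage from the Fourier world on $\T^2$ to the geometric setting: eigenfunctions $\varphi_n$ are not uniformly bounded and $\s_N(x)$ is now $x$-dependent, so the $L^p(\mu)$-estimates underpinning Propositions~\ref{PROP:WH1} and~\ref{PROP:nonlin} must be re-derived using Sogge-type eigenfunction bounds, Weyl asymptotics, and pointwise control of $\s_N(x)$. These are precisely the inputs to be supplied in Section~\ref{SEC:mfd}; once they are available, the dynamical compactness argument above carries over to the manifold/domain setting with only cosmetic modifications.
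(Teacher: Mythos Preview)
Your proposal is correct and follows essentially the same route as the paper. The only differences are packaging: the paper obtains tightness directly on $C(\R;H^s)$ with the compact-open topology by bounding $\|u\|_{W^{1,p}_T H^{s-2}}$ via the equation and then interpolating to a H\"older norm (Arzel\`a--Ascoli plus Markov), whereas you phrase the same input as a Kolmogorov-type equicontinuity estimate on $C([-T,T];H^s)$ followed by a diagonal extraction over $T$; and the paper's identification of the limit as a distributional solution uses exactly your $F_N/F_{N_0}/F$ splitting.
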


Theorems~\ref{THM:1} and~\ref{THM:2} 
 extend~\cite[Theorem 1.11]{BTT1} for the defocusing Wick ordered cubic NLS
 ($m = 2$)
to all defocusing nonlinearities (all  $m \geq 2$).
While the main structure of the argument follows that in~\cite{BTT1}, 
the main source of challenge for our problem is 
the more and more complicated
 combinatorics for higher values of $m$.
See Appendix~\ref{SEC:A}
for an example of an concrete combinatorial
argument for $m = 3$ in the case $\M=\T^2$, following the methodology in~\cite{BO96, BTT1}.
In order to overcome this combinatorial difficulty, 
we introduce the {\it white noise functional} (see Definition~\ref{DEF:W} below)
and avoid 
combinatorial arguments
of increasing complexity in~$m$,
allowing us to prove Propositions~\ref{PROP:WH1} and~\ref{PROP:nonlin}
in a concise manner.
 In order to present how we overcome the combinatorial complexity  in a clear manner, 
we decided to first discuss the proofs 
 of Propositions \ref{PROP:WH1}, \ref{PROP:Gibbs1}, 
 and \ref{PROP:nonlin}
 in the case of the flat torus $\T^2$
 (Sections~\ref{SEC:2} and~\ref{SEC:3}).
 This allows us to isolate the main idea.
We then discuss the geometric component
and prove 
the analogues of  Propositions \ref{PROP:WH1}, \ref{PROP:Gibbs1}, 
 and \ref{PROP:nonlin}
in a general geometric setting
(Section~\ref{SEC:mfd}).

\begin{remark}\label{REM:NLS}\rm

Let $m = 2$ and $\M = \T^2$.
Then,  the Wick ordered NLS~\eqref{NLS2}
can be formally written as 
\begin{align}
i \dt u + \Dl  u = (|u|^2 - 2\s_\infty) u, 
\label{NLS4}
\end{align}

\noi
where $\s_\infty$ is the (non-existent) limit of $\s_N \sim \log N $ as $N \to \infty$.

Given $u $ as in~\eqref{G5},  define
$\theta_N =  \fint_{\T^2} |\P_N u|^2 dx   - \s_N $, 
where $\fint_{\T^2} f(x)  dx = \frac 1{4\pi^2} \int_{\T^2} f(x) dx$.
Then, 
it is easy to see that 
the limit $\theta_\infty := \lim_{N \to \infty} \theta_N$ exists
in $ L^p(\mu)$ for any $p \geq 1$.
Thus, by 
 setting $v  (t) =  e^{2 i t \theta_\infty }u (t)$, 
 we can rewrite~\eqref{NLS4} as 
\begin{align}
i \dt v + \Dl  v = (|v|^2 - 2 \textstyle \fint_{\T^2} |v|^2 dx) v. 
\label{NLS5}
\end{align}

\noi
Note that $\|v\|_{L^2} = \infty$
almost surely. 
Namely,~\eqref{NLS5} is also a formal expression for the limiting dynamics.
In~\cite{BO96}, Bourgain studied 
\eqref{NLS5}
and proved local well-posedness below $L^2(\T^2)$
in a probabilistic setting.

If $v$ is a smooth solution to~\eqref{NLS5}, 
then
by  setting $w  (t) =  e^{- 2 i t \fint_{\T^2} |v|^2 dx}v(t)$, 
we see that $w$ is a solution to the standard cubic NLS:
\begin{align}
i \dt w + \Dl  w = |w|^2 w.
\label{NLS6}
\end{align}

\noi
This shows that the Wick ordered NLS~\eqref{NLS4} and~\eqref{NLS5}
are ``equivalent'' to the standard cubic NLS
in the smooth setting.
Note that  this formal reduction relies on the fact that 
the Wick ordering introduces
only  a linear term 
when $m=2$.
For $m \geq 3$, 
the Wick ordering introduces higher order terms
and thus 
there is no formal equivalence between 
the standard NLS
\eqref{NLS1}
and the Wick ordered NLS~\eqref{NLS2}.

\end{remark}

\begin{remark}\label{REM:focusing}
\rm 

So far, we focused on the defocusing NLS.
Let us now discuss the situation in the focusing case:
\begin{align*}
i \dt u + \Dl  u = - |u|^{k-2}u 
\end{align*}

\noi
with the Hamiltonian given by 
\begin{align*}
H(u) = \frac{1}{2}\int_{\T^d} |\nb u|^2dx
- \frac1{k} \int_{\T^d} |u|^k dx.
\end{align*}

In the focusing case, 
the Gibbs measure can be formally written as 
\begin{align*}
d P^{(k)}_d = 
Z^{-1} e^{-H(u)} du
= 
Z^{-1} e^{\frac 1k \int_{\T^d} |u|^k dx} d\mu. 
 \end{align*}

\noi
The main difficulty is that $\int_{\T^d} |u|^k dx$ is unbounded.
When $d = 1$, 
Lebowitz-Rose-Speer~\cite{LRS}
constructed the Gibbs measure $P^{(k)}_1$
for $2< k \leq 6$,
by adding an extra $L^2$-cutoff.
Then, Bourgain 
~\cite{BO94} 
 constructed global-in-time flow
 and proved the invariance of the Gibbs measure for $k \leq 6.$
See also McKean~\cite{McKean}.

When $ d= 2$, the situation becomes much worse. 
Indeed,  Brydges-Slade~\cite{BS}
 showed that the Gibbs measure $P^{(4)}_2$
 for the focusing cubic NLS on $\T^2$
 can not be realized as a probability measure
 even 
 with the Wick order nonlinearity
 and/or
 with a (Wick ordered) $L^2$-cutoff.
In~\cite{BO97}, Bourgain pointed out that 
an $\eps$-smoothing on the nonlinearity
makes this problem well-posed
and the invariance of the Gibbs measure
may be proven even in the focusing case.

\end{remark}

\begin{remark}\rm
In a recent paper \cite{OTNLW}, 
we also studied 
the defocusing nonlinear wave equations
 (NLW) in two spatial dimensions (with an even integer $k = 2m \geq 4$ and $\rho \geq 0$):
\begin{align}
\begin{cases}
\dt^2 u -  \Dl_g  u + \rho u  +  u^{k-1} = 0 \\
(u, \dt u) |_{t = 0} = (\phi_0, \phi_1), 
\end{cases}
\qquad (t,x) \in \R \times \M
\label{NLW1}
\end{align}

\noi
and its associated Gibbs measure:
\begin{align}
d\Pk & = Z^{-1} \exp(-  H(u,\dt u ))du\otimes d(\dt u) \notag \\
 & = Z^{-1} e^{-\frac 1{2m} \int u^{2m} dx} 
e^{-\frac 12 \int (\rho u^2 + |\nb u |^2) dx} du\otimes e^{-\frac 12 \int (\dt u)^2} d(\dt u).  
\label{NLW2}
 \end{align}

\noi
As in the case of NLS, 
the Gibbs measure in \eqref{NLW2} is not well defined in the two spatial dimensions.
Namely, 
 one needs to consider the Gibbs measure $\Pk$
associated to the Wick ordered Hamiltonian\footnote{In
the case of NLW, we only need to use the Hermite polynomials since we deal with real-valued functions.}
as in~\eqref{Gibbs2}
and study the associated dynamical problem 
given by the following  defocusing Wick ordered NLW: 
\begin{align}
\dt^2 u -  \Dl  u + \rho u \, +  :\! u^{k-1} \!:\, = 0. 
\label{WNLW2}
\end{align}

In the case of the flat torus $\M = \T^2$ with  $\rho > 0$, 
we showed that 
the  defocusing Wick ordered NLW \eqref{WNLW2}
is almost surely globally  well-posed
with respect to 
the Gibbs measure $\Pk$
and that the Gibbs measure $\Pk$ is invariant under the dynamics of \eqref{WNLW2}.
%
%
For a general two-dimensional compact Riemannian manifold without boundary or a bounded domain in $\R^2$
(with the Dirichlet or Neumann boundary condition),
we showed that an analogue of Theorem \ref{THM:2} 
(i.e.\,almost sure global existence and invariance of the Gibbs measure $\Pk$ in 
some mild sense) holds
for \eqref{WNLW2} when $\rho > 0$.
In the latter case with the Dirichlet boundary condition, 
we can also take $\rho = 0$.

In particular, our result on $\T^2$
is analogous
to that for  the defocusing cubic NLS on $\T^2$ \cite{BO96},
where the main difficulty lies in  constructing local-in-time unique solutions
almost surely with respect to the Gibbs measure.
We achieved this goal for any even $k\geq 4$ 
by exploiting one degree of smoothing
in the Duhamel formulation of the Wick ordered NLW \eqref{WNLW2}.
As for the Wick ordered NLS \eqref{NLS2} on $\T^2$, such smoothing is not available
and the construction of  unique solutions with the Gibbs measure
as initial data remains  open
for the (super)quintic case.

\end{remark}

\begin{remark}\rm 
In \cite{BO94, R}, 
Bourgain ($k = 2, 3$)
and Richards ($k = 4$)
proved invariance of the Gibbs measures
for the 
generalized KdV equation (gKdV) on the circle:
\begin{align}
\dt u + \dx^3 u = \pm \dx (u^k), 
\qquad ( t, x) \in \R \times \T.
\label{gKdV1}
\end{align}

\noi
In \cite{ORT}, 
the authors and Richards
studied the problem for $k \geq 5$.
In particular, by following the approach in \cite{BTT1}
and this paper, 
we proved almost sure global existence and invariance of the Gibbs measuresin 
some mild sense
analogous to Theorem \ref{THM:1}
for (i) all $k \geq 5$ in the defocusing case
and (ii) $k = 5$ in the focusing case.
Note that there is no need to apply a renormalization
for constructing the Gibbs measures
for this problem since the equation is posed on $\T$.
See \cite{LRS, BO94}.

\end{remark}

This paper is organized as follows.
 In Sections~\ref{SEC:2} and~\ref{SEC:3}, 
 we present the details of the proofs of Propositions \ref{PROP:WH1}, \ref{PROP:Gibbs1}, 
 and \ref{PROP:nonlin}
 in the particular case when $\M=\T^2$.
 We then indicate the changes required to treat the general case
 in Section~\ref{SEC:mfd}.
In Section \ref{Sect5}, we prove Theorems \ref{THM:1} and \ref{THM:2}.  
In Appendix \ref{SEC:A}, 
we present an alternative proof of Proposition \ref{PROP:WH1}
when $m = 3$ in the case $\M = \T^2$, 
performing concrete combinatorial computations.

\section{Construction of the Gibbs measures}
\label{SEC:2}

 In this section, we present the proofs of Propositions \ref{PROP:WH1} and \ref{PROP:Gibbs1}
and construct the Gibbs measure $\Pk$ in \eqref{Gibbs2}.	
One possible approach
is to  use the Fock-space formalism in quantum field theory
\cite{Simon, GJ, Meyer, DerezinskiGerard}.
As mentioned above, however, we present a pedestrian Fourier analytic approach to the problem
since we believe that it is more
accessible to a wide range of readers.
The argument presented in this section and the next section (on Proposition \ref{PROP:nonlin})
follows  the presentation in \cite{DPT1} with one important difference;
we work in the complex-valued setting and hence we  will make use of  
 the (generalized) Laguerre polynomials instead of the Hermite polynomials. Their orthogonal properties play an essential role.
See Lemmas \ref{LEM:W1} and \ref{LEM:Z1}.

\subsection{Hermite polynomials, 
 Laguerre polynomials, 
and Wick ordering}
\label{SUBSEC:2.1}

First, recall the Hermite polynomials $H_n(x; \s)$
defined through the generating function:
\begin{equation}
F(t, x; \s) : =  e^{tx - \frac{1}{2}\s t^2} = \sum_{k = 0}^\infty \frac{t^k}{k!} H_k(x;\s)
\label{H1}
 \end{equation}
	
\noi
for $t, x \in \R$ and $\s > 0$.
For simplicity, we set $F(t, x) : = F(t, x; 1)$
and $H_k(x) : = H_k(x; 1)$ in the following.
Note that we have 
\begin{align}
 H_k(x, \s) = \s^{\frac{k}{2}}H_k\big(\s^{-\frac{1}{2}}x \big).
\label{H1a}
 \end{align}

\noi
From~\eqref{H1},  we directly deduce   the following recursion relation 
\begin{equation}\label{H1b}
\partial_{x}H_{k}(x;\s)=kH_{k-1}(x;\s),
\end{equation}	
for all $k\geq 0$.
This allows to compute the $H_{k}$, up to the constant term. 
The constant term    is given by
\begin{equation*}
H_{2k}(0,\s)=(-1)^k (2k-1)!! \, \s^k 
\qquad \text{and} \qquad H_{2k+1}(0,\s)=0, 
\end{equation*}

\noi
for all $k\geq0$,
where $(2k-1)!! = (2k-1)(2k-3)\cdots 3\cdot 1
= \frac{(2k)!}{2^k k!}$
and $(-1)!! = 1$ by convention.	
This  can   be easily deduced from~\eqref{H1} by taking $x=0$. For readers' convenience, we write out the first few Hermite polynomials
in the following:
\begin{align*}
& H_0(x; \s) = 1, 
\qquad 
H_1(x; \s) = x, 
\qquad
H_2(x; \s) = x^2 - \s, \\
& H_3(x; \s) = x^3 - 3\s x, 
\qquad 
H_4(x; \s) = x^4 - 6\s x^2 +3\s^2.
\end{align*}

\noi
The monomial $x^k$
can be expressed in term of the Hermite polynomials:
\begin{align}
x^k = \sum_{m = 0}^{[\frac{k}{2}]}
\begin{pmatrix}
k\\2m 
\end{pmatrix}
(2m-1)!! \, \s^m H_{k-2m}(x; \s).
\label{H2}
\end{align}

Fix $d \in \N$, \footnote{Indeed, the discussion presented here also holds for $d = \infty$ 
in the context of abstract Wiener spaces.
For simplicity, however, we restrict our attention to finite values for $d$.} 
let $\H = \R^d$.
Then, consider the Hilbert space\footnote{Here, $Q_\H = \R^d$ when $d < \infty$.
When $d = \infty$, we set $Q_\H$ to be an appropriate extension of $\H$
such that $(\H, Q_\H, \mu_\infty)$ forms an abstract Wiener space
with $\H$ as the Cameron-Martin space.} $\G_\H = L^2(Q_\H, \mu_d; \C)$
endowed with  the Gaussian measure  $d\mu_d
= (2\pi)^{-\frac{d}{2}} \exp(-{|x|^2}/{2})dx$, $x = (x_1, \dots,
x_d)\in \R^d$.
We define a {\it homogeneous Wiener chaos of order
$k$} to be an element of the form 
\[ {\bf H}_{k}(x) =  \prod_{j = 1}^d H_{k_j}(x_j),\] 

\noi
where $k= k_1 + \cdots + k_d$
and $H_{k_j}$ is the Hermite polynomial of degree $k_j$ defined in~\eqref{H1}. 
Denote  by $\G_k(\H)$ the closure of homogeneous Wiener chaoses of order $k$
under $L^2(\R^d, \mu_d)$.
Then,  we have the following Wiener-Ito decomposition:\footnote{This is (equivalent to) the Fock space in 
quantum field theory.
See \cite[Chapter I]{Simon}.
In particular, 
 the Fock space $\F (\H) 
= \bigoplus_{k = 0}^\infty \H_\C^{\otimes_\text{sym}^k}
$ is shown to be equivalent to  
the Wiener-Ito decomposition \eqref{IW}. 
In the Fock space formalism, the Wick renormalization 
can be stated as the  reordering  of the creation operators on the left and annihilation operator
on the right.  
We point out that 
while much of our discussion can be recast in the Fock space formalism, 
our main aim of this paper is to give a self-contained presentation (as much as possible)
accessible to readers not familiar with the formalism
in quantum field theory.
Therefore, we stick to a simpler Fourier analytic and probabilistic approach.}
\begin{align} L^2(Q_\H, \mu_d; \C) = \bigoplus_{k = 0}^\infty \G_k(\H).
\label{IW}
\end{align}

Given a homogeneous polynomial 
$P_k(x) =P_k(x_1, \dots, x_d)$ of degree $k$, we define {\it the Wick ordered
polynomial}  $\, :\!P_k(x)\!\!: \, $ to be its projection onto
$\mathcal{H}_k$. In particular, we have $:x_j^k\!: \, = H_{k}(x_j)$ and
$:\prod_{j = 1}^d x_j^{k_j}\!:\, = \prod_{j = 1}^d H_{k_j}(x_j)$ 
with $k= k_1 + \cdots + k_d$.

Now,  let $g$ be a standard complex-valued Gaussian random
variable. Then, $g$ can be written as $g = \frac{h_1}{\sqrt{2}}+ i\frac{h_2}{\sqrt{2}}$, 
where $h_1$ and $h_2$
are independent standard real-valued Gaussian random variables. 
We
investigate the Wick ordering on $|g|^{2m}$ for $m\in \mathbb{N}$,
that is, the projection of $|g|^{2m}$ onto $\mathcal{H}_{2m}$.
When $m = 1$, $|g|^2 = \frac 12 (h_1^2 + h_2^2)$ is Wick-ordered into
\begin{align}
 :\!|g|^2\!: \, = \tfrac 12 (h_1^2 - 1) + \tfrac 12 (h_2^2-1) = |g|^2 - 1.
\label{H2a}
 \end{align}

\noi
When $m = 2$, $|g|^4 = \frac 14 (h_1^2+ h_2^2)^2 = \frac 14 (h_1^4 + 2  h_1^2 h_2^2 +  h_2^4)$ is
Wick-ordered into
\begin{align*}
:\!|g|^4\!:\,  & = \tfrac 14 (h_1^4 -6 h_1^2 + 3) + \tfrac 12 (h_1^2 - 1)(h_2^2 - 1)
+ \tfrac 14(h_2^4 -6 h_2^2 + 3)\\
& = \tfrac 14(h_1^4 + 2 h_1^2 h_2^2 + h_2^4)
- 2 (h_1^2 + h_2^2) + 2 \\
& = |g|^4 - 4  |g|^2 + 2.
\end{align*}

\noi
When $m = 3$, 
a direct computation shows that
 $$|g|^6 = \frac 18 (h_1^2+ h_2^2)^3 = \frac 18(h_1^6 + 3 h_1^4 h_2^2 +3h_1^2 h_2^4 +h_2^6)$$
  is Wick-ordered into
\begin{align*}
:\!|g|^6\!: \, & =  \tfrac 18 H_6(h_1) 
+ \tfrac 38 H_4(h_1) H_2(h_2)
+ \tfrac 38 H_2(h_1) H_4(h_2)
+ \tfrac 18 H_6(h_2) \\
& = |g|^6 - 9|g|^4 + 18 |g|^2 - 6.
\end{align*}

\noi
In general, we have 
\begin{align}
:\! |g |^{2m} \!: \, 
 & = \frac 1{2^m}\sum_{\l = 0}^m 
\begin{pmatrix}
m\\ \l
\end{pmatrix}
H_{2\l}(h_1 ) H_{2m - 2\l}(h_2)\notag \\
&  = \sum_{\l = 0}^m 
\begin{pmatrix}
m\\ \l
\end{pmatrix}
H_{2\l}(\Re g; \tfrac 12 ) H_{2m - 2\l}(\Im g; \tfrac 12 ),
 \label{H3}
\end{align}

\noi
where we used~\eqref{H1a} in the second equality.
It follows from the rotational invariance of the 
complex-valued Gaussian random variable
that 
$:\! |g |^{2m} \!: \, = P_m(|g|^2)$
for some polynomial $P_m$ of degree $m$
with the leading coefficient 1.
This fact is, however, not obvious from~\eqref{H3}.

The following lemma shows that the 
Wick ordered monomials 
$:\! |g |^{2m} \!: $
can be expressed in terms of the Laguerre polynomials (recall the definition~\eqref{L1}).

\begin{lemma}\label{LEM:Wick1}
Let $m \in \N$.
For  a complex valued mean-zero
Gaussian random variable $g$ with $\textup{Var}(g) = \s>0$, 
we have 
\begin{align}
:\! |g |^{2m} \!: \, 
&  = \sum_{\l = 0}^m 
\begin{pmatrix}
m\\ \l
\end{pmatrix}
H_{2\l}(\Re g; \tfrac \s2 ) H_{2m - 2\l}(\Im g; \tfrac \s2 )\notag \\
& 
= (-1)^m m!\cdot L_m(|g|^2; \s).
 \label{H4}
\end{align}

As a consequence, 
 the Wick ordered 
monomial $:\! |u_N|^{2m}  \!:$
defined in~\eqref{L0}
satisfies~\eqref{L3}
for any $N \in \N$.

\end{lemma}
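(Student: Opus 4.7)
The plan is to treat the two equalities in~\eqref{H4} separately. The first equality is essentially the definition of Wick ordering. Writing $a = \Re g$ and $b = \Im g$, which are independent centered real Gaussians each of variance $\s/2$, the binomial theorem gives
$$|g|^{2m} = (a^2 + b^2)^m = \sum_{\l = 0}^{m} \binom{m}{\l} a^{2\l}\, b^{2(m-\l)},$$
and the factorization of Wick ordering over independent variables (Subsection~\ref{SUBSEC:2.1}, $:\!\prod x_j^{k_j}\!: \,= \prod H_{k_j}(x_j;\cdots)$) converts each monomial $a^{2\l} b^{2(m-\l)}$ into $H_{2\l}(a;\s/2)\,H_{2(m-\l)}(b;\s/2)$, reproducing the middle expression in~\eqref{H4}.

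For the nontrivial second equality, the plan is to compare two generating functions. Setting $S_m$ equal to the middle sum in~\eqref{H4} and $\Phi(t) := \sum_{m\ge 0}\frac{t^m}{m!} S_m$, the identity $\binom{m}{\l}/m! = [\l!(m-\l)!]^{-1}$ forces the double sum to factor as
$$\Phi(t) = E(t,a)\, E(t,b), \qquad E(t,y) := \sum_{\l \ge 0} \frac{t^\l}{\l!}\, H_{2\l}(y;\s/2).$$
The key ingredient for computing $E(t,y)$ is the probabilistic representation $H_n(y;\rho) = \E[(y + i\rho^{1/2} X)^n]$ with $X$ a standard real Gaussian, which is immediate from~\eqref{H1} applied to $\E[e^{u(y + i\rho^{1/2}X)}] = e^{uy - \rho u^2/2}$. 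Exchanging sum and expectation (legitimate for $|t|$ small) gives
$$E(t,y) = \E\!\left[ e^{t (y + i (\s/2)^{1/2}X)^2}\right],$$
a Gaussian integral which, after completing the square, evaluates in closed form to $(1 + \s t)^{-1/2} \exp\bigl(ty^2/(1 + \s t)\bigr)$. The product then collapses to
$$\Phi(t) = \frac{1}{1+\s t}\,\exp\!\left(\frac{t(a^2+b^2)}{1+\s t}\right) = \frac{1}{1+\s t}\,\exp\!\left(\frac{t|g|^2}{1+\s t}\right).$$

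It remains to identify the right-hand side. The scaled Laguerre generating function, obtained from~\eqref{L1}--\eqref{L2}, reads $\sum_m t^m L_m(x;\s) = (1-\s t)^{-1}\exp(-tx/(1-\s t))$; substituting $t \mapsto -t$ and $x = |g|^2$ identifies $\Phi(t)$ with $\sum_{m \ge 0}(-t)^m L_m(|g|^2;\s) = \sum_{m \ge 0}\frac{t^m}{m!}(-1)^m m!\, L_m(|g|^2;\s)$. Matching coefficients of $t^m$ yields $S_m = (-1)^m m!\, L_m(|g|^2;\s)$, which is the desired second equality. The concluding statement that~\eqref{L0} and~\eqref{L3} coincide for $:\!|u_N|^{2m}\!:$ is then immediate: for each fixed $x \in \T^2$, the random variable $u_N(x)$ is a centered complex Gaussian of variance $\s_N$ by~\eqref{Wick4}, so applying~\eqref{H4} pointwise turns~\eqref{L0} into~\eqref{L3}.

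The main (and only) computational step is the Gaussian integral giving $E(t,y)$; everything else is formal manipulation of power series. Conceptually, the factor $\binom{m}{\l}/m! = [\l!(m-\l)!]^{-1}$ is what decouples the real and imaginary parts of $g$ at the generating-function level, reducing the identity to matching two explicit rational-exponential expressions in $t$.
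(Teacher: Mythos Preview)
Your proof is correct and takes a genuinely different route from the paper's. The paper proves the second equality in~\eqref{H4} by an induction argument: it shows that both expressions satisfy the same second-order recursion $\frac{\partial^2}{\partial g\,\partial\bar g}(\cdot)_m = m^2(\cdot)_{m-1}$ (using $\partial_x H_k = kH_{k-1}$ on the Hermite side and the explicit formula~\eqref{L1b} on the Laguerre side), together with the base case $m=1$ and the normalization $\E[(\cdot)_m] = 0$ to pin down the constant term at each step. You instead bypass the induction entirely by summing in $m$: the probabilistic representation $H_n(y;\rho) = \E[(y+i\rho^{1/2}X)^n]$ lets you evaluate the exponential generating function $\Phi(t)$ as a Gaussian integral in closed form, after which the identification with the Laguerre generating function~\eqref{L1} (with $t\mapsto -t$) is immediate. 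Your argument is arguably slicker and more in the spirit of the later sections of the paper, which repeatedly exploit generating-function identities (Lemmas~\ref{LEM:W1} and~\ref{LEM:Z1}); the paper's proof is more elementary in that it uses only the differential recursion~\eqref{H1b} and the explicit Laguerre formula~\eqref{L1b}, avoiding the integral representation of $H_n$.
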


\begin{proof}
The first equality follows from~\eqref{H3}
and scaling with~\eqref{H1a}.
Moreover, by scaling with~\eqref{L2} and~\eqref{H1a}, 
we can assume that 
$g$ is a standard complex-valued Gaussian random variable
with $g_1 = \Re g$ and $g_2 = \Im g$.
Define $\mathfrak{H}_m(|g|^2) $ and $\mathfrak{L}_m(|g|^2) $ by 
\begin{align}
\mathfrak{H}_m(|g|^2) &  
= \sum_{\l = 0}^m 
\begin{pmatrix}
m\\ \l
\end{pmatrix}
H_{2\l}(g_1; \tfrac 12) H_{2m - 2\l}(g_2 ; \tfrac 12), \notag \\
\mathfrak{L}_m(|g|^2) & = (-1)^m m!\cdot L_m(|g|^2).
 \label{H5}
\end{align}

\noi
Then,~\eqref{H4} follows once we prove
the following three properties:
\begin{align}
& \hphantom{X}
\mathfrak{H}_1(|g|^2)
 = \mathfrak{L}_1(|g|^2)
= |g|^2 - 1, \label{H6}\\
&  \begin{cases}
\vphantom{\Big|}
\frac{\dd^2}{\dd g \dd \cj g}
\mathfrak{H}_m(|g|^2)
= m^2 
 \mathfrak{H}_{m-1} (|g|^2),\\
\vphantom{\Big|}
\frac{\dd^2}{\dd g \dd \cj g}
\mathfrak{L}_m(|g|^2)
= 
m^2 \mathfrak{L}_{m-1}(|g|^2), 
\end{cases}
 \label{H7}\\
& \hphantom{X}
 \E[\mathfrak{H}_m(|g|^2)]
 = \E[\mathfrak{L}_m(|g|^2)] = 0\label{H8},
\end{align}

\noi
for all $m \geq 2$.
Noting that both 
$\mathfrak{H}_m(|g|^2)$
and $\mathfrak{L}_m(|g|^2)$
are polynomials in $|g|^2$, 
the three properties~\eqref{H6}, 
\eqref{H7}, and~\eqref{H8}
imply that 
$\mathfrak{H}_m(|g|^2) = \mathfrak{L}_m(|g|^2)$
for all $m \in \N$.

The first property~\eqref{H6}
follows from~\eqref{H2a} and 
\eqref{L1a}.
Next, we prove~\eqref{H7} for $\mathfrak{H}_m(|g|^2)$.
From $\dd_{g} = \frac 12( \dd_{g_1} - i \dd_{g_2})$
and
$\dd_{\cj g} = \frac 12( \dd_{g_1} + i \dd_{g_2})$, 
we have 
\begin{align*}
\frac{\dd^2}{\dd g \dd \cj g}
= \frac{1}{4} \Dl_{g_1, g_2}, 
\end{align*}

\noi
where $ \Dl_{g_1, g_2}$	
denotes the usual Laplacian on $\R^2$ in the variables
$(g_1, g_2)$.
Then, recalling that $\dx H_k(x; \s) = k H_{k-1}(x; \s)$, 
we have
\begin{align}
\frac{\dd^2}{\dd g \dd \cj g}
& \mathfrak{H}_m(|g|^2)
 = \frac{1}{4} \Dl_{g_1, g_2}
\mathfrak{H}_m(|g|^2)\notag \\
& = 
\frac 14 \sum_{\l = 1}^m 
\begin{pmatrix}
m\\ \l
\end{pmatrix}
2\l (2\l-1)
H_{2\l-2}(g_1; \tfrac 12) H_{2m - 2\l}(g_2 ; \tfrac 12)\notag \\
& \hphantom{X}
+ \frac 14 
\sum_{\l = 0}^{m-1} 
\begin{pmatrix}
m\\ \l
\end{pmatrix}
(2m -2\l) (2m - 2\l - 1)
H_{2\l}(g_1; \tfrac 12) H_{2m - 2\l - 2}(g_2 ; \tfrac 12)\notag \\
& 
= m^2 \sum_{\l = 0}^{m-1} 
\begin{pmatrix}
m - 1\\ \l
\end{pmatrix}
H_{2\l}(g_1; \tfrac 12) H_{2(m-1) - 2\l}(g_2 ; \tfrac 12). \notag
\end{align}
	
\noi
As for the second identity in~\eqref{H7}, thanks to the formula~\eqref{L1b},  we get
\begin{align*}
\frac{\dd^2}{\dd g \dd \cj g}
 \mathfrak{L}_m(|g|^2)
&  = \frac{(-1)^m m!}{4} 
\sum_{\l = 0}^m
\begin{pmatrix}
m\\ \l
\end{pmatrix}
\frac{(-1)^\l}{\l!} \Dl_{g_1, g_2} (g_1^2 + g_2^2)^\l \\
& 
 = (-1)^{m-1} m!
\sum_{\l = 1}^m
\begin{pmatrix}
m\\ \l
\end{pmatrix}
\frac{(-1)^{\l-1}}{\l!} \l^2 |g|^{2(\l-1)}
= m^2 \mathfrak{L}_{m-1}(|g|^2).
\end{align*}

\noi
This proves~\eqref{H7}.
The property~\eqref{H8} follows
from (i) independence of $g_1$ and $g_2$
together with 
the orthogonality of 
$H_k(x)$ and the constant function 1
under $e^{-x^2} dx$
and (ii) the orthogonality of 
$L_m(x)$ and the constant function 1
under $\ind_{\R_+}e^{-x} dx$

Let $u$ be as in~\eqref{G5}.
Fix  $ x \in \T^2$.
Letting $\wt g_n = g_n e^{in\cdot x}$, 
we see   that $\{\wt g_n\}_{n \in \N}$  is 
 a sequence of independent standard complex-valued Gaussian
random variables.
Then, given $N \in \N$, 
$\Re u_N(x)$ and $\Im u_N(x)$
are mean-zero real-valued Gaussian
random variables with variance $\frac{\s_N}{2}$, 
while 
$u_N(x)$ is a mean-zero complex-valued Gaussian
random variable with variance $\s_N$, 
Then, 
it follows from~\eqref{L0} with~\eqref{Wick1}
and~\eqref{H4} that
\begin{align*}
:\! |u_N(x) |^{2m} \!: \, 
& = \sum_{\l = 0}^m 
\begin{pmatrix}
m\\ \l
\end{pmatrix}
H_{2\l}(\Re u(x); \tfrac {\s_N}2) H_{2m - 2\l}(\Im u(x) ; \tfrac  {\s_N}2) \notag \\
& = (-1)^m m!\cdot L_m(|u_N(x)|^2; \s_N),
\end{align*}

\noi
verifying~\eqref{L3}.
This proves the second claim in Lemma~\ref{LEM:Wick1}.
\end{proof}

\subsection{White noise functional}
Next, we define the white noise functional.
Let  $w(x;\o)$ be the  mean-zero complex-valued Gaussian white noise on $\T^2$
defined by
\[ w(x;\o) = \sum_{n\in \Z^2} g_n(\o) e^{in\cdot x}.\]

\begin{definition}\label{DEF:W}\rm
The white noise functional $W_{(\cdot)}: L^2(\T^2) \to L^2(\O)$
is defined by 
\begin{equation}
 W_f (\o) = \jb{f, w(\o)}_{L^2_x} = \sum_{n \in \Z^2} \ft f(n) \cj{g_n}(\o).
\label{W0}
 \end{equation}

\noi
for a function $f \in L^2(\T^2)$.

\end{definition}

Note that this is basically the periodic
and higher dimensional version of the classical Wiener integral
$\int_a^b f dB$.
It can also be viewed as the Gaussian process indexed by $f \in L^2(\T^2)$.
See \cite[Model 1 on p.\,19 and Model 3 on p.\,21]{Simon}.
For each $f \in L^2(\T^2)$,  
$W_f$ is a complex-valued Gaussian random variable
with mean 0 and variance $\|f\|_{L^2}^2$.
Moreover, we have 
\[ E\big[ W_f \cj{W_h} ] = \jb{f, h}_{L^2_x}\]

\noi
for $f, h \in L^2(\T^2)$.
In particular, the white noise functional
$W_{(\cdot)}: L^2(\T^2) \to L^2(\O)$
is an isometry.

\begin{lemma}\label{LEM:W0}
Given $f \in L^2(\T^2)$, we have
\begin{align}
\int_{\O} e^{\Re W_f(\o)} dP(\o)
= e^{\frac{1}{4}\|f\|_{L^2}^2}.
\label{W0a}
\end{align}

\end{lemma}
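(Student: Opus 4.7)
The plan is to compute the integral by reducing it to the standard moment generating function for a real Gaussian. The key observation is that $\Re W_f$ is itself a mean-zero real-valued Gaussian random variable, so I only need to identify its variance.

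First I would write $g_n = \frac{1}{\sqrt 2}(h_n^{(1)} + i h_n^{(2)})$ with $h_n^{(1)}, h_n^{(2)}$ independent standard real Gaussians, so that
\begin{equation*}
\Re\bigl(\ft f(n) \cj{g_n}\bigr) = \tfrac{1}{\sqrt 2}\bigl(\Re \ft f(n)\, h_n^{(1)} + \Im \ft f(n)\, h_n^{(2)}\bigr).
\end{equation*}
This exhibits $\Re W_f = \sum_n \Re(\ft f(n)\cj{g_n})$ as a (convergent in $L^2$) sum of independent mean-zero real Gaussians, and hence $\Re W_f$ itself is a mean-zero real Gaussian. Computing
\begin{equation*}
\E\bigl[|\Re(\ft f(n)\cj{g_n})|^2\bigr] = \tfrac{1}{2}\bigl((\Re \ft f(n))^2 + (\Im \ft f(n))^2\bigr) = \tfrac{1}{2}|\ft f(n)|^2
\end{equation*}
and summing gives $\E[(\Re W_f)^2] = \frac{1}{2}\|f\|_{L^2}^2$ by Plancherel. (Equivalently, noting $\E[g_n^2]=0$ and $\E[|g_n|^2]=1$, one checks $\E[W_f^2]=0$ and $\E[|W_f|^2] = \|f\|_{L^2}^2$, so $\Re W_f$ and $\Im W_f$ each have variance $\frac{1}{2}\|f\|_{L^2}^2$.)

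Having identified $\Re W_f \sim N\bigl(0, \tfrac{1}{2}\|f\|_{L^2}^2\bigr)$, the identity \eqref{W0a} reduces to the classical Gaussian moment generating function: for $X \sim N(0,\s^2)$,
\begin{equation*}
\E[e^{X}] = e^{\s^2/2}.
\end{equation*}
Applying this with $\s^2 = \tfrac{1}{2}\|f\|_{L^2}^2$ yields exactly $e^{\frac{1}{4}\|f\|_{L^2}^2}$, as claimed.

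There is no serious obstacle. The only mild point of care is justifying convergence of the series defining $W_f$ and interchanging limits with the expectation; this is handled either by first treating $f$ with $\ft f$ finitely supported and then passing to the limit using the $L^2$-isometry property of $W_{(\cdot)}$ together with uniform integrability of $e^{\Re W_{f_N}}$ (the $L^p$-norms of $e^{\Re W_{f_N}}$ are uniformly bounded since $\|f_N\|_{L^2} \le \|f\|_{L^2}$), or by invoking directly the fact that $\Re W_f$ is Gaussian with the stated variance.
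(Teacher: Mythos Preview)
Your proof is correct and takes essentially the same approach as the paper: both exploit that $\Re W_f$ is built from independent real Gaussians and reduce to the Gaussian moment generating function. The paper carries this out by factoring the expectation as a product over $n$ of explicit one-dimensional Gaussian integrals in $\Re g_n$ and $\Im g_n$, whereas you package the same computation by first identifying $\Re W_f \sim N(0,\tfrac{1}{2}\|f\|_{L^2}^2)$ and then applying $\E[e^X]=e^{\s^2/2}$ once; your version is slightly more streamlined and also addresses the limiting justification that the paper leaves implicit.
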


\begin{proof}
Noting that $\Re g_n$ and $\Im g_n$
are mean-zero real-valued Gaussian random variables with variance $\frac 12$,
it follows from~\eqref{W0} that 
\begin{align*}
\int_{\O} e^{\Re W_f(\o)} dP(\o)
& = \prod_{n \in \Z} \frac 1{\pi} \int_\R e^{\Re \ft f(n) \Re g_n - (\Re g_n)^2} d \Re g_n\\
& \hphantom{XXXXX}
\times
 \int_\R e^{  \Im \ft f(n) \Im g_n - (\Im g_n)^2} d \Im g_n
\\
& = e^{\frac{1}{4}\|f\|_{L^2}^2}.
\qedhere
\end{align*}

\end{proof}

The following lemma on the white noise functional
and the Laguerre polynomials plays an important role in our analysis.
In the following, we present an elementary proof, using the
generating function $G$ in \eqref{L1}.
See also Folland \cite{Folland}.

\begin{lemma}\label{LEM:W1}
Let $f, h \in L^2(\T^{2})$ such that $\|f\|_{L^2} = \|h\|_{L^2} = 1$.
Then, for $k, m \in \Z_{\geq 0}$, we have 
\begin{align}
\E\big[ L_k(|W_f|^2)L_m(|W_h|^2)\big]
=  \dl_{km}  |\jb{f, h}|^{2k}.
\label{W1}
\end{align}

\noi
Here, $\dl_{km}$ denotes the Kronecker delta function.
\end{lemma}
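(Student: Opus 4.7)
The plan is to encode both sides of \eqref{W1} into a two-variable generating function, reduce the computation to a single Gaussian integral, and then match Taylor coefficients. Concretely, for $|s|, |t| < 1$, consider
\begin{equation*}
\mathcal{G}(s,t) := \sum_{k,m \geq 0} s^k t^m \, \E\big[L_k(|W_f|^2) L_m(|W_h|^2)\big]
= \E\big[G(s, |W_f|^2)\, G(t, |W_h|^2)\big],
\end{equation*}
where $G$ is the generating function in \eqref{L1}. Using the explicit form of $G$, this becomes
\begin{equation*}
\mathcal{G}(s,t) = \frac{1}{(1-s)(1-t)}\, \E\bigg[\exp\Big(-\tfrac{s}{1-s} |W_f|^2 - \tfrac{t}{1-t} |W_h|^2 \Big)\bigg].
\end{equation*}

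The main step is to evaluate this expectation. From the definition \eqref{W0} of the white noise functional, $(W_f, W_h)$ is a jointly (circularly symmetric) complex Gaussian vector with covariance matrix
\begin{equation*}
K = \begin{pmatrix} \|f\|_{L^2}^2 & \jb{f,h} \\ \cj{\jb{f,h}} & \|h\|_{L^2}^2 \end{pmatrix} = \begin{pmatrix} 1 & \rho \\ \cj \rho & 1 \end{pmatrix}, \qquad \rho := \jb{f,h}.
\end{equation*}
Writing the joint density of $(W_f, W_h)$ as $(\pi^2 \det K)^{-1} e^{-z^* K^{-1} z}$ and setting $\alpha = s/(1-s)$, $\beta = t/(1-t)$, the integral becomes a finite-dimensional Gaussian of the form $\int_{\C^2} e^{-z^*(K^{-1} + D) z}$ with $D = \mathrm{diag}(\alpha, \beta)$, giving
\begin{equation*}
\E\big[e^{-\alpha|W_f|^2 - \beta|W_h|^2}\big] = \frac{1}{\det K \cdot \det(K^{-1} + D)} = \frac{1}{(1+\alpha)(1+\beta) - \alpha\beta |\rho|^2}.
\end{equation*}
A short algebraic simplification using $1+\alpha = 1/(1-s)$, $1+\beta = 1/(1-t)$ then yields
\begin{equation*}
\mathcal{G}(s,t) = \frac{1}{1 - st\, |\jb{f,h}|^2} = \sum_{k \geq 0} s^k t^k |\jb{f,h}|^{2k}.
\end{equation*}

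Comparing coefficients of $s^k t^m$ on both sides gives exactly \eqref{W1}. The only potentially delicate step is justifying the Gaussian integral formula for $(W_f, W_h)$, which can be handled either by diagonalizing the covariance (reducing to two independent complex Gaussians) or by observing that $W_f$ and $W_h$ can be decomposed as $W_h = \rho W_f + W_{h - \rho f}$ with $W_{h-\rho f}$ independent of $W_f$ (since $\jb{f, h - \rho f} = 0$ and the white noise functional is an isometry mapping orthogonal elements to independent Gaussians); the remaining integral is then a product of two explicit one-variable Gaussian integrals of the form $\int_\C e^{-c|z|^2}$. The exchange of expectation and the double series is justified for $|s|, |t|$ small since $L_k(|W_f|^2)$ has all moments (it is a polynomial in a Gaussian) and the generating series converges absolutely in that range; uniqueness of power series then transfers the identity to the coefficients, which are independent of $s, t$.
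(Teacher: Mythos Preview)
Your proof is correct and follows the same overall strategy as the paper: form the two-variable generating function $\E\big[G(s,|W_f|^2)G(t,|W_h|^2)\big]$, show it equals $\dfrac{1}{1-st|\jb{f,h}|^2}$, and match Taylor coefficients. The only difference lies in how the key expectation is evaluated. The paper ``linearizes'' the quadratic exponent via the Hubbard--Stratonovich identity $e^{u^2/2}=\tfrac{1}{\sqrt{2\pi}}\int_\R e^{xu-x^2/2}\,dx$, reducing everything to the exponential-moment formula of Lemma~\ref{LEM:W0}; you instead recognize $(W_f,W_h)$ as a circularly symmetric complex Gaussian pair and compute the $\C^2$-Gaussian integral directly via $\det(I+KD)$. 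Your route is shorter and avoids the four auxiliary real integrations, at the cost of invoking the joint complex-Gaussian density (which requires the caveat $|\jb{f,h}|<1$ that you correctly flag and circumvent by the orthogonal decomposition $W_h=\rho W_f+W_{h-\rho f}$). The paper's route stays closer to its ``pedestrian'' philosophy by using only the scalar moment formula of Lemma~\ref{LEM:W0}, and the same linearization trick is reused verbatim in the proof of Lemma~\ref{LEM:Z1}. Either way, the justification of the interchange $\E\sum=\sum\E$ follows from $\E\big[L_k(|W_f|^2)^2\big]=1$ (orthonormality of $L_k$ under the law of $|W_f|^2$) and Cauchy--Schwarz, so Fubini applies for all $|s|,|t|<1$.
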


First, recall the following identity:	
\begin{align}
e^\frac{u^2}{2} = \frac{1}{\sqrt{2\pi}} \int_\R e^{xu - \frac{x^2}{2}} dx. 
\label{W1a}
\end{align}

\noi
Indeed, we used a rescaled version of~\eqref{W1a}
in the proof of Lemma~\ref{LEM:W0}.

\begin{proof}[Proof of Lemma~\ref{LEM:W1}]
Let $G$ be as in~\eqref{L1}.
Then, for any $ -1< t, s <0$, 
from~\eqref{W1a} and Lemma~\ref{LEM:W0}, we have 
\begin{align}
\int_{\O} & G(t, |W_f(\o)|^2) G(s, |W_h(\o)|^2) dP(\o)
 =  \frac{1}{1-t}\frac1 {1-s} 
\int_\O
e^{-\frac{t}{1-t}|W_f|^2 - \frac{s}{1-s}|W_h|^2}
dP(\o)\notag \\
& =  \frac{1}{1-t}\frac1 {1-s} 
\frac1 {4\pi^2}
\int_{\R^4} e^{-\frac{x_1^2 + x_2^2 + y_1^2 + y_2^2}{2}}\notag \\
& \hphantom{XXXXXXXX}
\times 
\int_\O
\exp\Big(
\Re W_{
\sqrt{\frac{-2t}{1-t}}(x_1 -i  x_2 ) f
+ \sqrt{\frac{-2s}{1-s}}(y_1 -i y_2) h}\Big)
dP dx_1dx_2dy_1dy_2  \notag \\
& =  \frac{1}{1-t}\frac1 {1-s} 
\frac1 {4\pi^2}
\int_{\R^4} e^{-\frac{x_1^2 + x_2^2 }{2(1-t)}-\frac{y_1^2 + y_2^2 }{2(1-s)}}
\notag \\
& \hphantom{XXXXXXXX}
\times 
e^{\frac 12 \Re \big(
\sqrt{\frac{-2t}{1-t}} \sqrt{\frac{-2s}{1-s}}(x_1 -i  x_2 ) 
 (y_1 + i y_2)\jb{f,  h}\big)}
 dx_1dx_2dy_1dy_2  \notag \\
 \intertext{By a change of variables
 and applying~\eqref{W1a}, we have} 
& =  
\frac1 {4\pi^2}
\int_{\R^2} e^{-\frac{ y_1^2 + y_2^2}{2}}
\int_\R e^{ \sqrt{ts}
 (y_1 \Re \jb{f, h} - y_2 \Im \jb{f,  h})x_1 - \frac{x_1^2}{2}} dx_1 \notag \\
& \hphantom{XXXXXXXX}
\times \int_\R
 e^{ \sqrt{ts}
 (y_2 \Re \jb{f, h} + y_1 \Im \jb{f,  h})x_2- \frac{x_2^2}{2}} dx_2 dy_1dy_2  \notag \\
& =  
\frac1 {2\pi}
\int_{\R^2} e^{-\frac{ y_1^2 + y_2^2}{2}}
e^{ \frac 1 2 ts |\jb{f, h}|^2 (y_1^2 + y_2^2) }dy_1dy_2 \notag \\
& = \frac{1}{1-ts |\jb{f, h}|^2}
= \sum_{k = 0}^\infty t^ks^k |\jb{f, h}|^{2k}.
\label{W1b}
\end{align}

\noi
In the second to the last equality, we used the fact that 
$\frac 12 ts |\jb{f, h}|^2 < \frac 12$.
Hence, it follows from~\eqref{L1} and~\eqref{W1b} that 
\begin{align*}
\sum_{k = 0}^\infty t^ks^k |\jb{f, h}|^{2k}
 = \sum_{k, m  = 0}^\infty 
t^ks^m
\int_\O L_k(|W_f(\o)|^2)L_m(|W_h(\o)|^2)dP(\o).
\end{align*}

\noi
By comparing the coefficients of $t^ks^m$, we obtain~\eqref{W1}.
\end{proof}

Now, we are ready to make sense of the 
nonlinear part of the Wick ordered Hamiltonian $H_{\text{\tiny Wick}}$ in~\eqref{Hamil2}.
We first present the proof of Proposition~\ref{PROP:WH1}
for $p = 2$. Recall that
\begin{align*}
G_N(u) = \frac 1{2m} \int_{\T^2}:\! |\P_N u|^{2m}\!: dx.
\end{align*}

\noi
Then, we have the following convergence property of $G_N(u)$ in $L^2(\mu)$.

\begin{lemma}\label{LEM:WH2}

Let $m \geq2$ be an   integer.
Then, $\{G_N(u)\}_{N\in \N}$
is a Cauchy sequence
in $L^2(H^s(\T^2), \mu)$.
More precisely, 
there exists $C_{m} > 0$ such that 
\begin{align}
\| G_M(u) - G_N(u) \|_{L^2(\mu)}
\leq \frac{C_m}{N^\frac{1}{2}}
\label{WH2}
\end{align}

\noi
for 
any $ M \geq N \geq 1$.

\end{lemma}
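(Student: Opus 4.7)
The plan is to reduce $\|G_M(u) - G_N(u)\|_{L^2(\mu)}^2$ to an explicit integral against a Green's-function-type kernel, by rewriting the truncated random field via the white noise functional of Definition~\ref{DEF:W} and exploiting the orthogonality of Laguerre polynomials from Lemma~\ref{LEM:W1}. Concretely, for each $x\in \T^2$ and $N\in \N$, I would define $f_{N,x}\in L^2(\T^2)$ by
\[ \ft{f}_{N,x}(n) = \frac{e^{-in\cdot x}}{\sqrt{1+|n|^2}}\,\ind_{|n|\le N}, \]
so that $\overline{W_{f_{N,x}}}(\o)= \P_N u(x;\o)$, and hence $|\P_N u(x)|^2 = |W_{f_{N,x}}|^2$ as random variables. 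Parseval then gives $\|f_{N,x}\|_{L^2}^2 = \s_N$ (independent of $x$), while for $N\le M$,
\[ \jb{f_{N,x}, f_{M,y}}_{L^2} = K_N(y-x), \qquad K_N(z) := \sum_{|n|\le N} \frac{e^{in\cdot z}}{1+|n|^2}. \]
Lemma~\ref{LEM:Wick1} combined with the homogeneity~\eqref{L2} rewrites $:\!|\P_N u(x)|^{2m}\!:\, = (-1)^m m!\,\s_N^m L_m(|W_{\tilde f_{N,x}}|^2)$, where $\tilde f_{N,x}:= f_{N,x}/\sqrt{\s_N}$ has unit $L^2$ norm.

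Next, Fubini together with a pointwise application of Lemma~\ref{LEM:W1} to the unit vectors $\tilde f_{M,x}$ and $\tilde f_{N,y}$ yields, for $N\le M$,
\begin{align*}
\E\big[G_M(u)\,G_N(u)\big]
& = \frac{(m!)^2}{(2m)^2}\,\s_M^m\s_N^m \iint_{\T^2\times \T^2} \frac{|K_N(y-x)|^{2m}}{(\s_M\s_N)^m}\, dx\, dy \\
& = c_m \int_{\T^2} |K_N(z)|^{2m}\, dz,
\end{align*}
with $c_m = (2\pi)^2 (m!)^2/(2m)^2$; the $\s$-factors cancel because $\jb{f_{N,x},f_{M,y}} = K_N(y-x)$ whenever $N\le M$. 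The same identity applied to $\E[G_N(u)^2]$ and $\E[G_M(u)^2]$ then gives
\[ \|G_M(u)-G_N(u)\|_{L^2(\mu)}^2 = c_m \int_{\T^2}\bigl(|K_M(z)|^{2m} - |K_N(z)|^{2m}\bigr)\, dz. \]

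It remains to show that this right-hand side is $O(1/N)$. Since $K_N$ is real, writing $b:= K_M - K_N$ and using the elementary identity $(a+b)^{2m} - a^{2m} = 2m\int_0^1 (a+tb)^{2m-1} b\, dt$, Cauchy--Schwarz gives
\[ \Big|\int_{\T^2}\bigl(K_M^{2m}-K_N^{2m}\bigr)\,dz\Big|
\le 2m \sup_{t\in[0,1]}\|K_N+tb\|_{L^{4m-2}}^{2m-1}\,\|b\|_{L^2}. \]
Two inputs close the argument: (i) the uniform bound $\|K_N\|_{L^p}\les_p 1$ for every $p<\infty$, which follows from Hausdorff--Young applied to $\ft{K_N}(n) = (1+|n|^2)^{-1}\ind_{|n|\le N}$, whose $\ell^{p'}(\Z^2)$ norm is uniformly bounded in $N$; and (ii) the decay $\|b\|_{L^2}^2 = \sum_{N<|n|\le M}(1+|n|^2)^{-2} \les 1/N^2$, again by Parseval. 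Combining these yields $\|G_M(u)-G_N(u)\|_{L^2(\mu)}^2 \les_m 1/N$, as claimed. The main technical obstacle is precisely this last step: a direct binomial expansion $K_M^{2m} - K_N^{2m} = \sum_{j=1}^{2m}\binom{2m}{j} K_N^{2m-j} b^j$ would force control of $\|b\|_{L^{2j}}$ for $j\ge 2$, and the decay rates of these norms in $N$ degenerate for large exponents because $b$ is only uniformly bounded in $L^p$ for $p<\infty$; the Taylor linearization above is the crucial device that shifts all the loss onto the favorable $L^2$ norm of $b$, where the explicit $1/N$ rate is available.
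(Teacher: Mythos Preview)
Your proof is correct and follows essentially the same route as the paper's. The paper introduces the normalized functions $\eta_N(x)$ and the kernel $\g_N$ (your $\tilde f_{N,x}$ and $K_N$), applies Lemma~\ref{LEM:W1} exactly as you do to reduce to $\int_{\T^2}\big[(\g_M)^{2m}-(\g_N)^{2m}\big]\,dx$, and then closes with Cauchy--Schwarz plus Hausdorff--Young in the same way; the only cosmetic difference is that the paper uses the algebraic factorization $|a^{2m}-b^{2m}|\le C_m|a-b|(|a|^{2m-1}+|b|^{2m-1})$ in place of your Taylor-integral identity, which is an equivalent elementary step. One small remark: your closing comment that a direct binomial expansion would necessarily fail is overstated---one can still peel off a single factor of $b$ in $L^2$ from each term $K_N^{2m-j}b^j$ and bound the remaining $K_N^{2m-j}b^{j-1}$ in $L^2$ via H\"older and the uniform $L^p$ bounds on $K_N$ and $b$---but this does not affect the validity of your argument.
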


Given $N \in \N$, 
let $\s_N$ be as in~\eqref{Wick4}.
For {\it fixed} $x \in \T^{2}$ and 
 $N \in \N$, 
 we  define
\begin{align}
\eta_N(x) (\cdot) & := \frac{1}{\s_N^\frac{1}{2}}
\sum_{ |n| \leq N} \frac{\cj{e_n(x)}}{\sqrt{1+ |n|^2}}e_n(\cdot), 
\label{W3}\\
\g_N (\cdot) & := 
\sum_{ |n| \leq N} \frac{1}{1+ |n|^2}e_n(\cdot),
\label{W3a}
\end{align}
	
\noi
where $e_n(y) = e^{in\cdot y}$.
Note that 
\begin{align}
 \| \eta_N(x)\|_{L^2(\T^{2})} = 1
\label{W3b}
\end{align}	

\noi
for all (fixed) $x \in \T^{2}$ and all $N \in \N$.
Moreover, we have 
\begin{align}
\jb{\eta_M(x), \eta_N(y)}_{L^2(\T^{2})}
= \frac{1}{\s_M^\frac{1}{2}\s_N^\frac{1}{2}} \g_N(y-x)
= \frac{1}{\s_M^\frac{1}{2}\s_N^\frac{1}{2}} \g_N(x-y), 
\label{W4}
\end{align}

\noi
for fixed $x, y\in \T^{2}$
and $N, M \in \N $ with $M\geq N$.

\begin{proof}[Proof of Lemma~\ref{LEM:WH2}]

Let $m \geq 2$ be an  integer.
Given $N \in \N$ and $x \in \T^{2}$, 
it follows from~\eqref{G5},~\eqref{W0}, and~\eqref{W3}
that 
\begin{align}
u_N (x) = \s_N^\frac{1}{2}\frac{u_N(x)}{\s_N^\frac{1}{2}}
= \s_N^{\frac 12} \cj{W_{ {\eta_N(x)}}}.
\label{W6}
\end{align}

\noi	
Then, from~\eqref{L3} and~\eqref{W6}, we have 
\begin{align}
:\! |u_N|^{2m} \!: \, =
 (-1)^m m!   \s^m_NL_m\bigg(\frac{| u_N|^2}{\s_N}\bigg)
 \, = (-1)^m m! \s_N^m L_m\big(\big|W_{ {\eta_N(x)}}\big|^2\big).
 \label{W7}
\end{align}

From 
~\eqref{W7}, Lemma~\ref{LEM:W1}, and~\eqref{W4}, 
we have
\begin{align}
(2m)^{2} \|  G_M(u)   - & G_N(u)  \|_{L^2(\mu)}^2
  = (m!)^2 
 \int_{\T^2_x\times \T^2_y}
\int_{\O} 
\Big[ 
 \s_M^{2m} L_m\big(\big|W_{ {\eta_M(x)}}\big|^2\big)
L_m\big(\big|W_{  {\eta_M(y)}}\big|^2\big) \notag \\
& \hphantom{XXXXXXXXXX}
 -  \s_M^{m}\s_N^{m} L_m\big(\big|W_{ {\eta_M(x)}}\big|^2\big)
L_m\big(\big|W_{ {\eta_N(y)}}\big|^2\big)\notag \\
& \hphantom{XXXXXXXXXX}
 -   \s_M^{m}\s_N^{m} L_m\big(\big|W_{ {\eta_N(x)}}\big|^2\big)
L_m\big(\big|W_{ {\eta_M(y)}}\big|^2\big) \notag \\
& \hphantom{XXXXXXXXXX}
 +  \s_N^{2m} L_m\big(\big|W_{ {\eta_N(x)}}\big|^2\big)
L_m\big(\big|W_{{\eta_N(y)}}\big|^2\big)
\Big] dP dx dy
 \notag \\
 & = (m!)^2 
 \int_{\T^2_x\times \T^2_y}   
 \big[ 
 (\g_M(x-y))^{2m} - (\g_N(x-y))^{2m}\big] dx dy
 \notag \\
 & = (m!)^2 
 \int_{\T^2}   
 \big[ 
 (\g_M(x))^{2m} - (\g_N(x))^{2m}\big] dx 
 \notag \\
& \leq C_m 
 \int_{\T^2}
\big|\g_M(x)-\g_N(x)\big|
\cdot  \big[|\g_M(x)|^{2m-1} + |\g_N(x)|^{2m-1}\big] dx .
\label{W8a}
\end{align}

\noi
In the second equality, we used the fact that $\g_N$ is
a real-valued function.

From~\eqref{W3a}, we have
\begin{align}
\big\|\g_M-\g_N\big\|_{L^2}
= \bigg(\sum_{N < |n|\leq M} \frac{1}{(1+|n|^2)^2}\bigg)^\frac{1}{2}
\les \frac{1}{N}.
\label{W8b}
\end{align}
	
\noi
By Hausdorff-Young's inequality, we have
\begin{align}
\big\| |\g_N|^{2m-1}\big\|_{L^2}
= \|\g_N\|_{L^{4m-2}}^{2m-1}
\leq \Bigg(\sum_{|n|\leq N} \frac{1}{(1+|n|^2)^{\frac{4m-2}{4m-3}}}\Bigg)^{\frac{4m-3}{2}}
\leq C_m < \infty
\label{W8c}
\end{align}

\noi
uniformly in $N \in \N$.
Then, 
\eqref{WH2} follows from~\eqref{W8a},~\eqref{W8b}, and~\eqref{W8c}.
\end{proof}

\subsection{Wiener chaos estimates}\label{sect23}
In this subsection, we complete the proof of Proposition
~\ref{PROP:WH1}.
Namely, we upgrade~\eqref{WH2} in Lemma~\ref{LEM:WH2}
 to any finite $p \geq 2$.
Our main tool is  the following Wiener chaos estimate
(see \cite[Theorem I.22]{Simon}).

\begin{lemma}\label{LEM:hyp3}
 Let $\{ g_n\}_{n \in \N }$ be 
 a sequence of  independent standard real-valued Gaussian random variables.
Given  $k \in \mathbb{N}$, 
let $\{P_j\}_{j \in \N}$ be a sequence of polynomials in 
$\bar g = \{ g_n\}_{n \in \N }$ of  degree at most $k$. 
Then, for $p \geq 2$, we have
\begin{equation}
 \bigg\|\sum_{j \in \N} P_j(\bar g) \bigg\|_{L^p(\O)} \leq (p-1)^\frac{k}{2} \bigg\|\sum_{j \in \N} P_j(\bar g) \bigg\|_{L^2(\O)}.
  \label{hyp4}
 \end{equation}

\end{lemma}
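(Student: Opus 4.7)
The plan is to invoke Nelson's hypercontractivity inequality for the Ornstein--Uhlenbeck semigroup on Gauss space. First, set $F := \sum_{j \in \N} P_j(\bar g)$; by hypothesis $F$ is an $L^2(\O)$-limit of polynomials of total degree at most $k$ in the independent Gaussians $\bar g$. Using the Wiener--Ito decomposition \eqref{IW}, I would write
\[
F = \sum_{j = 0}^k F_j, \qquad F_j \in \H_j,
\]
with the sum terminating at $j = k$ by the degree hypothesis and orthogonal in $L^2(\O)$. (If $\|F\|_{L^2} = \infty$ there is nothing to prove, so assume $\|F\|_{L^2} < \infty$.)

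Next, introduce the Ornstein--Uhlenbeck semigroup $(T_t)_{t \geq 0}$, which acts on the $j$-th homogeneous Wiener chaos as $T_t F_j = e^{-jt} F_j$. The analytic core is Nelson's theorem: for every $p \geq 2$ and every $t \geq 0$ with $e^{2t} \geq p - 1$, one has
\[
\|T_t G\|_{L^p(\O)} \leq \|G\|_{L^2(\O)}
\]
for all $G \in L^2(\O)$. I would not reprove this but quote it, for instance, from Simon's book; its heart is a two-point hypercontractive inequality for a single Gaussian, tensorized over the countable family $\{g_n\}$.

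With this in hand, set $t := \tfrac{1}{2} \log(p-1)$, so that $e^{2t} = p-1$. Since $F$ lies in the finite direct sum $\bigoplus_{j=0}^k \H_j$, the operator $T_t$ is invertible on the span of $F$, and I may write $F = T_t G$ with $G := \sum_{j=0}^k e^{jt} F_j \in L^2(\O)$. Applying hypercontractivity and then Pythagoras in the orthogonal chaos decomposition,
\[
\|F\|_{L^p} = \|T_t G\|_{L^p} \leq \|G\|_{L^2} = \biggl(\sum_{j=0}^k e^{2jt} \|F_j\|_{L^2}^2\biggr)^{\!1/2} \leq e^{kt} \|F\|_{L^2} = (p-1)^{k/2}\|F\|_{L^2},
\]
which is precisely \eqref{hyp4}.

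The only genuine obstacle is Nelson's hypercontractivity itself, but the lemma is stated without proof in any case, so this is deferred to the literature. The algebraic step above (writing $F = T_t T_t^{-1} F$) is rigorous because $F$ has a finite chaos expansion, reducing everything to finite-dimensional linear algebra on $\bigoplus_{j=0}^k \H_j$; in particular one need not worry about extending the unbounded inverse $T_t^{-1}$ beyond this span.
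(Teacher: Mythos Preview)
Your argument is correct and matches the paper's own treatment: the paper does not prove this lemma either but simply observes that since each $P_j(\bar g)$ lies in $\bigoplus_{\l=0}^k \Gamma_\l(\H)$, the estimate follows as a direct corollary of Nelson's hypercontractivity for the Ornstein--Uhlenbeck semigroup. You have spelled out precisely the standard deduction (invert $T_t$ on the finite chaos span and use $e^{jt} \leq e^{kt}$), which is exactly what the paper defers to the literature.
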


Observe that the estimate \eqref{hyp4} is independent of $d \in\N$. 
By noting that $P_j(\bar g)\in  \bigoplus_{\l = 0}^k \G_\l(\H)$, 
this lemma follows as a direct corollary to the
  hypercontractivity of the Ornstein-Uhlenbeck
semigroup due to Nelson \cite{Nelson2}.

We are now ready to present the proof of Proposition~\ref{PROP:WH1}.

\begin{proof}[Proof of Proposition~\ref{PROP:WH1}]

Let $m \geq 2$ be an  integer.
For $1\leq p \leq 2$, 
Proposition~\ref{PROP:WH1} follows from 
Lemma~\ref{LEM:WH2}.
In the following, we consider the case $p > 2$.
From~\eqref{L1b},~\eqref{L3}, and~\eqref{L4}, we have 
\begin{align*}
G_M(u) - G_N(u) 
=\frac{ (-1)^m m!}{2m}
\sum_{\l = 1}^m
\begin{pmatrix}
m\\ \l
\end{pmatrix}
\frac{(-1)^\l}{\l!} 
\Si_{\l}. 
\end{align*}

\noi
Here,  $\Si_\l$ is given by 
\[ \Si_{\l} = \frac{ \s^{m}_{M}}{\s^{\ell}_{M}}\sum_{\substack{\G_{2\l}(0)\\|n_j| \leq M}}\prod_{j = 1}^{2\l} \frac{g^*_{n_j}}{\sqrt{1+|n_j|^2}}
-  \frac{ \s^{m}_{N}}{\s^{\ell}_{N}}\sum_{\substack{\G_{2\l}(0)\\|n_j| \leq N}}
\prod_{j = 1}^{2\l} \frac{g^*_{n_j}}{\sqrt{1+|n_j|^2}},\]

\noi
where $\G_k$ and   $g^*_{n_j}$ are defined by 
\begin{align}
& \G_k(n) = \{ (n_1, \dots, n_{k}) \in \Z^{k}: n_1 - n_2 + \cdots +(-1)^k n_{k} = n\}, 
\label{N1a}\\
& g^*_{n_j} =
\begin{cases}
 g_{n_j} & \text{if $j$ is odd,} \\
\cj {g_{n_j}} & \text{if $j$ is even.}
\end{cases}
\label{N1b}
\end{align}

\noi
Noting that $\Si_{\l}$ is a sum of polynomials
of degree $2\l$ in $\{g_n\}_{n \in \Z^2}$, 
Proposition~\ref{PROP:WH1} follows
from Lemmas~\ref{LEM:WH2} and~\ref{LEM:hyp3}.
\end{proof}

\subsection{Nelson's estimate}

In this subsection, we prove Proposition~\ref{PROP:Gibbs1}.
Our main tool is the so-called Nelson's estimate,
i.e.~in establishing 
an tail estimate of size $\ld>0$, we divide the argument into
low and high frequencies, depending on the size of $\ld$.
See~\eqref{N1} and~\eqref{N3}.
What plays a crucial role here is
the defocusing property of the Hamiltonian
and the logarithmic upper bound on $-G_N(u)$, which we discuss below.

For each $m \in \N$, there exists finite  $a_m >0$ such that 
$(-1)^m L_m(x^2) \geq - a_m $ 
for all $x \in \R$.
Then, it follows from ~\eqref{L2}, 
\eqref{L3}, \eqref{Wick4}, and~\eqref{L4} that there exists some finite $b_m > 0$ 
such that 
\begin{align}
-  G_N(u) =  -  \frac{1}{2m} \int_{\T^2}:\! |\P_N u|^{2m}\!: dx \leq   b_m (\log N)^{m}
\label{N2}
\end{align}

\noi
 for all $N\geq 1$.
Namely, while $G_N(u)$ is not sign definite, 
$-G_N(u)$ is bounded from above by a power of $\log N$.
This is where 
the defocusing property of the equation
\eqref{NLS3} plays an essential role.

\begin{proof}[Proof of Proposition~\ref{PROP:Gibbs1}]
Let $m \geq 2$ be an integer.
It follows from 	
 Proposition~\ref{PROP:WH1}
that the following tail estimate holds:
there exist $c_{m,p}, C_m > 0$ such that 
\begin{align}
 \mu\big(p | G_M(u) - G_N(u)| > \ld \big)
 \leq C_m e^{ - c_{m, p} N^\frac{1}{2m}\ld^\frac{1}{m}}
\label{N1}
\end{align}

\noi
for all $ M \geq N \geq 1$, $p \geq 1$, 
and all $\ld > 0$.
See,  for example,~\cite[Lemma 4.5]{Tzv}.

 We first show that $R_N(u)=e^{- G_{N}(u)}$ is in $ L^p(\mu)$ with a uniform bound in $N$. We have
\begin{align*}
\|R_{N}(u)\|^{p}_{L^{p}(\mu)}
&= \int_{H^{s}} e^{-pG_{N}(u)}d\mu(u)\\
&=\int_{0}^{\infty}\mu(e^{-pG_{N}(u)}> \al)d \al\\
&\leq 1+\int_{1}^{\infty}\mu(-pG_{N}(u)>\log \al)d \al.
\end{align*}

\noi
Hence,  it suffices to show that 
there exist $C,\dl>0$ such that 
\begin{equation}\label{N11}
\mu(-pG_{N}(u)>\log \al)\leq C\al^{-(1+\dl)}
\end{equation}

\noi
for all $\al > 1$ and $N \in \N$.
Given  $\ld =\log \al> 0$, 
 choose $N_0\in \R$ such that 
$\ld  = 2 p b_m (\log N_0)^{m} $.
Then,  it follows from~\eqref{N2} that 
\begin{align}
\mu \big( -p G_N(u) > \ld \big)
= 0
\label{N3}
\end{align}

\noi
for all $N < N_0$.
For $N \geq N_0$, it follows  from~\eqref{N2} and~\eqref{N1}
that 
there exist $\dl_{m, p}>0$ and $  C_{m, p}> 0$ such that 
\begin{align}
\mu \big( -p G_N(u) > \ld \big)
& \leq 
\mu \big( -p G_N(u) + p G_{N_0}(u) > \ld   -p  b_m (\log N_0)^{m} \big)\notag \\
& \leq  \mu \big( -p G_N(u) + p G_{N_0}(u) > \tfrac 12\ld   \big)\notag \\
& \leq C_m e^{ - c'_{m, p} N_0^\frac{1}{2m}\ld^\frac{1}{m}}
 =    C_m e^{ - c'_{m, p} \ld^\frac{1}{m} e^{\wt c_m \ld^\frac{1}{m}}}\notag\\
 & \ll C_{m, p} e^{-{(1+ \dl_{m, p})\lambda}}
\label{N4}
\end{align}
	
\noi
for all $N \geq N_0$.
 This shows that \eqref{N11} is satisfied
 in this case as well.
Hence, we have  $R_N(u) \in L^p(\mu)$   with a uniform bound in $N$,  depending on $p \geq 1$.

By \eqref{N1}, $G_{N}(u)$ converges to $G(u)$ in measure with respect to $\mu$.
Then, as a composition of $G_N(u)$ with  a continuous function, 
$R_{N}(u)=e^{-G_{N}(u)}$ converges to $R(u):=e^{-G(u)}$ in measure with respect to $\mu$. 
In other words,  given $\eps > 0$,  defining $A_{N, \eps}$ by 
\begin{equation*}
A_{N,\eps}=\big\{ \, |R_{N}(u)-R(u)| \leq\eps  \,\big\}, 
\end{equation*}

\noi
we have  $\mu({A^{c}_{N,\eps}})\to 0$, as $N\to\infty$. 
Hence,  by Cauchy-Schwarz inequality and the fact that $ \|R\|_{L^{2p}},   \|R_{N}\|_{L^{2p}}\leq C_{p}$
uniformly in $N \in \N$, we obtain
\begin{align*}
\|R-R_{N}\|_{L^{p}(\mu)}
&\leq \|(R-R_{N}){\bf 1}_{A_{N,\eps}}\|_{L^{p}(\mu)}+\|(R-R_{N}){\bf 1}_{{A^{c}_{N,\eps}}}\,\|_{L^{p}(\mu)}\\
&\leq  \eps \big(\,\mu(A_{N,\eps}\,)\big)^{\frac1p}+\|R-R_{N} \|_{L^{2p}(\mu)}
\big(\,\mu({A^{c}_{N,\eps}})\,\big)^{\frac1{2p}}\leq C\eps,
\end{align*}

\noi
for all sufficiently large $N$. This completes the proof  of Proposition~\ref{PROP:Gibbs1}.
\end{proof}

\section{On the Wick ordered nonlinearity}
\label{SEC:3}

In this section, we present the proof of Proposition~\ref{PROP:nonlin}.
The main idea is similar to that in Section~\ref{SEC:2}
but, this time,  we will make use of the generalized Laguerre functions $L_m^{(\al)}(x)$.
The generalized Laguerre polynomials $L^{(\al)}_m(x)$
are 
defined through the following generating function:
\begin{equation}
G_\al(t, x) : = \frac{1}{(1-t)^{\al+1}} e^{- \frac{tx}{1-t}} = \sum_{m = 0}^\infty t^mL_m^{(\al)}(x),
\label{T1}
 \end{equation}
	
\noi
for $|t| < 1$ and $x\in \R$. From~\eqref{T1},  
we obtain the following differentiation rule; for $\l \in \N$, 
\begin{align}
\frac{d^\l}{dx^\l} L^{(\al)}_m(x) = (-1)^\l L^{(\al+\l)}_{m-\l}(x).
\label{T2}
\end{align}

Given $N \in \N$, 
let $u_N = \P_N$, where
 $u$ is as in~\eqref{G5}.
Let $ m \geq2$ be an   integer.
Then, from~\eqref{nonlin2},~\eqref{L3}, 
\eqref{L2}, and~\eqref{T2}, we have 
\begin{align}
 F_N(u) 
 & = \P_N\big(\!   :\!|\P_N    u|^{2(m-1)} \P_Nu\!: \!\big)
  = (-1)^m m! \s_N^m \cdot \tfrac{1}{m}\P_N \dd_{\cj u_{N}} \Big\{L_m\Big(\tfrac{|u_N|^2}{\s_N}\Big)\Big\}\notag \\
& = (-1)^{m+1} (m-1)! \s_N^{m-1} \cdot \P_N \Big\{  L_{m-1}^{(1)}\Big(\tfrac{|u_N|^2}{\s_N}\Big) u_N \Big\}.
\label{T3}
\end{align}

\noi
\begin{remark}\rm 
Here, $\dd_{\cj u}$ denotes the usual differentiation in $\cj u$
 viewing $u$ and $\cj u$ as independent variables.
 This is not to be confused with 
$\frac{\dd H}{\dd \cj u}$ in~\eqref{Hamil0}.
Note that $\frac{\dd H}{\dd \cj u}$ in~\eqref{Hamil0}
comes from the symplectic structure of NLS
and the G\^ateaux derivative of $H$.
More precisely, we can view the dynamics
of NLS~\eqref{NLS1}
as a Hamiltonian dynamics with the symplectic space $L^2(\T^2)$
and the symplectic form $\dis \o(f, g) = \Im \int f(x) \cj{g(x)} dx$.
Then, we define $\frac{\dd H}{\dd \cj u}$
by 
\[ dH|_u(\phi) = \o\Big(\phi, -i \tfrac{\dd H}{\dd \cj u}\Big),\]

\noi
where 
$dH|_u(\phi)$
is  the 
 the G\^ateaux derivative
 given by
$dH|_u(\phi) = \frac{d}{d\eps}H(u + \eps \phi)\big|_{\eps = 0}$.
\end{remark}

The following lemma 
is an analogue of Lemma~\ref{LEM:W1}
for  the generalized Laguerre polynomials $L^{(1)}_m(x)$
and plays an important role in the proof of Proposition~\ref{PROP:nonlin}.

\begin{lemma}\label{LEM:Z1}
Let $f, h \in L^2(\T^{2})$ such that $\|f\|_{L^2} = \|h\|_{L^2} = 1$.
Then, for $k, m \in \Z_{\geq 0}$, we have 
\begin{align}
\E\Big[ \Li_k(|W_f|^2)W_f \cj{\Li_m(|W_h|^2)W_h}\Big]
=  \dl_{km}( k+1) |\jb{f, h}|^{2k}\jb{f, h}.
\label{Z1}
\end{align}

\noi
Here, $\dl_{km}$ denotes the Kronecker delta function.
\end{lemma}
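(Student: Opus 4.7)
The plan is to establish Lemma~\ref{LEM:Z1} via the generating function technique, in direct analogy with Lemma~\ref{LEM:W1}. Setting
\[ T(t, s) := \E\big[\, G_1(t, |W_f|^2)\, W_f \cdot \cj{G_1(s, |W_h|^2)\, W_h}\,\big], \]
the identity~\eqref{Z1} is equivalent to showing that
\[ T(t, s) = \frac{\jb{f, h}}{(1 - ts|\jb{f, h}|^2)^{2}}, \]
since the expansion $(1-x)^{-2} = \sum_{k \geq 0}(k+1) x^k$, after matching coefficients of $t^k s^m$, reproduces the right-hand side $\dl_{km}(k+1)|\jb{f, h}|^{2k}\jb{f, h}$ of~\eqref{Z1}.

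The next step is to reduce $T(t, s)$ to a Gaussian integral.  From the explicit form $G_1(t, x) = (1-t)^{-2}\exp(-tx/(1-t))$, one has
\[ T(t, s) = \frac{1}{(1-t)^2 (1-s)^2}\, \E\Big[W_f\, \cj{W_h}\, e^{ a |W_f|^2 + b|W_h|^2}\Big], \]
with $a = -t/(1-t)$ and $b = -s/(1-s)$.  For $-1 < t, s < 0$ both $a, b$ are positive, and the plan is to linearize the quadratics $a|W_f|^2$ and $b|W_h|^2$ using the rescaled version of~\eqref{W1a} exactly as in the proof of Lemma~\ref{LEM:W1}, namely
\[ e^{a |W_f|^2} = \frac{1}{2\pi}\int_{\R^2} \exp\big(\Re W_{\sqrt{2a}(x_1 - ix_2)f}\big)\, e^{-(x_1^2+x_2^2)/2}\, dx_1\, dx_2, \]
and the analogue for $W_h$, so that the inner expectation is brought to exponential form.

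The new ingredient needed to accommodate the insertion $W_f \cj{W_h}$ is an identity for mixed moments.  Extending the argument of Lemma~\ref{LEM:W0} by splitting each $g_n$ into its independent real and imaginary parts, one first obtains
\[ \E\big[e^{W_\phi + \cj{W_\psi}}\big] = e^{\jb{\phi, \psi}_{L^2}}
\qquad \text{for all } \phi, \psi \in L^2(\T^2). \]
Introducing auxiliary sources $\phi \mapsto \phi + \epsilon f$ and $\psi \mapsto \psi + \cj{\eta}\, h$ and taking $\partial_\epsilon \partial_\eta|_{\epsilon = \eta = 0}$ of both sides then yields, for every $\xi \in L^2(\T^2)$, the key formula
\[ \E\big[W_f\, \cj{W_h}\, e^{\Re W_\xi}\big] = \big[\jb{f, h} + \jb{f, \xi/2}_{L^2}\, \jb{\xi/2, h}_{L^2}\big]\, e^{\|\xi\|_{L^2}^2/4}, \]
which is applied with $\xi = \sqrt{2a}(x_1 - ix_2)f + \sqrt{2b}(y_1 - iy_2)h$ inside the four-dimensional Gaussian integral from the previous step.

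The contribution coming from $\jb{f, h}$ replicates the computation~\eqref{W1b} in Lemma~\ref{LEM:W1} (up to the extra prefactor $[(1-t)(1-s)]^{-1}$ arising from the ratio $G_1/G$), yielding $\jb{f, h}/[(1-t)(1-s)(1 - ts|\jb{f, h}|^2)]$.  The contribution from $\jb{f, \xi/2}_{L^2}\jb{\xi/2, h}_{L^2}$ is the Gaussian expectation of a degree-two polynomial in $(x_1, x_2, y_1, y_2)$ against the same quadratic weight, which is evaluated by completing the square as in~\eqref{W1b} and applying standard second-moment identities for two-dimensional complex Gaussians.  Summing the two contributions and simplifying gives $T(t, s) = \jb{f, h}/(1 - ts|\jb{f, h}|^2)^{2}$, from which~\eqref{Z1} follows by comparing the coefficients of $t^k s^m$.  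The main obstacle is the careful bookkeeping in this last polynomial Gaussian integral: the cross terms between $(x_1 - ix_2)$ and $(y_1 + iy_2)$ must combine with the shifted Gaussian weight to supply the extra factor $(1 - ts|\jb{f, h}|^2)^{-1}$, which is precisely the doubling of the denominator exponent characteristic of $G_1$ as opposed to $G = G_0$ in Lemma~\ref{LEM:W1}.
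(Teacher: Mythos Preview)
Your proposal is correct and follows the same generating-function strategy as the paper: compute $T(t,s)=\E[G_1(t,|W_f|^2)W_f\,\cj{G_1(s,|W_h|^2)W_h}]$, show it equals $\jb{f,h}/(1-ts|\jb{f,h}|^2)^2$, and read off the coefficients.

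The one point where the paper proceeds differently is in absorbing the extra factors $W_f$ and $\cj{W_h}$. Rather than keeping $W_f\cj{W_h}$ outside and deriving a mixed-moment identity $\E[W_f\cj{W_h}e^{\Re W_\xi}]=(\jb{f,h}+\jb{f,\xi/2}\jb{\xi/2,h})e^{\|\xi\|^2/4}$ by differentiating sources, the paper uses the companion identity to \eqref{W1a},
\[
u\,e^{u^2/2}=\frac{1}{\sqrt{2\pi}}\int_\R x\,e^{xu-x^2/2}\,dx,
\]
applied to $\Re W_f$ and $\Im W_f$ separately, which converts $G_1(t,|W_f|^2)W_f$ directly into a two-dimensional Gaussian integral with the polynomial weight $(x_1+ix_2)$. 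After multiplying with the analogous expression for $W_h$ and applying Lemma~\ref{LEM:W0}, the paper is left with a single four-dimensional Gaussian integral against $x\cj y$, which is dispatched by completing the square and one integration by parts. Your route splits the answer into two pieces (the $\jb{f,h}$ term and the quadratic term) that must be recombined at the end; the paper's route keeps everything as one integral throughout. Both are equally valid and of comparable length.
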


Besides~\eqref{W1a}, we will use  the following identity:
\begin{align}
u e^\frac{u^2}{2} = \frac{1}{\sqrt{2\pi}} \int_\R x e^{xu - \frac{x^2}{2}} dx. 
\label{Z2}
\end{align}

\noi
This follows from 
differentiating~\eqref{W1a} in $u$.

\begin{proof}[Proof of Lemma~\ref{LEM:Z1}]
Let $G_1$ be as in~\eqref{T1} with $\al = 1$.
Let $-1 < t< 0$.
From~\eqref{W1a} and~\eqref{Z2},  we have 
\begin{align*}
G_1(t,  |W_f|^2) W_f 
 & = \frac{1}{(1-t)^2} \Re W_f  e^{\frac{-t}{1-t}\big((\Re W_f)^2 + (\Im W_f)^2\big) }\notag \\
& \hphantom{XX}
+ \frac{i}{(1-t)^2} \Im W_f  e^{\frac{-t}{1-t}\big((\Re W_f)^2 + (\Im W_f)^2\big) }\notag \\
& = \frac{1}{\sqrt{-2t}(1-t)^\frac{3}{2}}
\frac{1}{2\pi}\int_{\R^2} (x_1+ix_2) e^{-\frac{x_1^2 + x_2^2}{2}}
e^{\sqrt{\frac{-2t}{1-t}}(x_1 \Re W_f + x_2 \Im W_f)} dx_1 dx_2.
\end{align*}

\noi
Given $x_1, x_2, y_1, y_2 \in  \R$, 
let $x = x_1 + i x_2$ and $y = y_1 + i y_2$.
Then, for any $ -1< t, s <0$, 
from Lemma~\ref{LEM:W0}, we have 
\begin{align}
\int_{\O} & G_1(t, W_f(\o)) W_f(\o) \cj{G_1(s, W_h(\o))W_h(\o)} dP(\o)\notag\\
& = 
\frac{1}{\sqrt{-2t}(1-t)^\frac{3}{2}}
\frac{1}{\sqrt{-2s}(1-s)^\frac{3}{2}}
\frac1 {4\pi^2}
\int_{\R^4} x \cj ye^{-\frac{|x|^2 + |y|^2 }{2}}\notag \\
& \hphantom{XXXXXXXX}
\times 
\int_\O
\exp\Big(
\Re W_{
\sqrt{\frac{-2t}{1-t}}\cj x  f
+ \sqrt{\frac{-2s}{1-s}}\cj y h}\Big)
dP dx_1dx_2dy_1dy_2  \notag \\
& = 
\frac{1}{\sqrt{-2t}(1-t)^\frac{3}{2}}
\frac{1}{\sqrt{-2s}(1-s)^\frac{3}{2}}
\frac1 {4\pi^2}
\int_{\R^4} x\cj y  e^{-\frac{|x|^2}{2(1-t)}-\frac{|y|^2}{2(1-s)}}
\notag \\
& \hphantom{XXXXXXXX}
\times 
e^{\frac 12 \Re \big(
\sqrt{\frac{-2t}{1-t}} \sqrt{\frac{-2s}{1-s}}\cj x y \jb{f,  h}\big)}
 dx_1dx_2dy_1dy_2  \notag \\
 \intertext{By a change of variables
 and applying~\eqref{W1a} and~\eqref{Z2}, we have} 
& =  \frac{1}{2 \sqrt{ts}}
\frac1 {4\pi^2}
\int_{\R^4} x\cj y  e^{-\frac{|x|^2}{2}-\frac{|y|^2}{2}}
e^{\sqrt{ts} \Re (\cj x y \jb{f,  h})}
 dx_1dx_2dy_1dy_2  \notag \\
& =  
 \jb{f, h} \frac1 {4\pi}
\int_{\R^2} |y|^2 
e^{ - \frac 1 2(1- ts |\jb{f, h}|^2) |y|^2  }dy_1dy_2 \notag \\
\intertext{By integration by parts, we have }
& =  
 \frac{\jb{f, h}}{1- ts |\jb{f, h}|^2}
\frac1 {2\pi}
\int_{\R^2} 
e^{ - \frac 1 2(1- ts |\jb{f, h}|^2) |y|^2  }dy_1dy_2 \notag \\
& = 
 \frac{\jb{f, h}}{(1- ts |\jb{f, h}|^2)^2}
= \sum_{k = 0}^\infty(k+1)  t^ks^k |\jb{f, h}|^{2k}\jb{f, h}.
\label{Z3}
\end{align}

\noi
Hence, it follows from~\eqref{T1} and~\eqref{Z3} that 
\begin{align*}
\sum_{k = 0}^\infty(k+1)  t^ks^k |\jb{f, h}|^{2k}\jb{f, h}
 = \sum_{k, m  = 0}^\infty 
t^ks^m
\int_\O \Li_k(|W_f(\o)|^2)W_f \cj{\Li_m(|W_h(\o)|^2)W_h} dP(\o).
\end{align*}

\noi
By comparing the coefficients of $t^ks^m$, we obtain~\eqref{Z1}.
\end{proof}

As a preliminary step to the proof of Proposition~\ref{PROP:nonlin}, 
we first estimate the size of the Fourier coefficient of $F_N(u)$.

\begin{lemma}\label{LEM:Z2}
Let $m \geq 2$ be an integer.
Then, for any $\theta > 0$, there exists $C_{m, \theta} > 0$ such that 
\begin{align}
\|  \jb{F_N(u), e_n}_{L^2_x}  \|_{L^2(\mu)}
\leq C_{m, \theta} \frac{1}{(1+ |n|^2)^{\frac 12(1 - \theta)}}
\label{Z4}
\end{align}

\noi
for any $n \in \Z^2$ and any $N \in \N$.	
Moreover, given positive $\eps < \frac 12$
and any $ 0 < \theta \leq 1 - \eps $, there exists $C_{m,\theta,  \eps} > 0$ such that 
\begin{align}
\|\jb{ F_M(u) - F_N(u), e_n}_{L^2_x} \|_{L^2(\mu)}
\leq C_{m, \theta, \eps} \frac{1}{N^\eps(1+ |n|^2)^{\frac{1}{2}(1-\theta - \eps)}}
\label{Z5}
\end{align}

\noi
for any $n \in \Z^2$ and any $ M \geq N \geq 1$.

\end{lemma}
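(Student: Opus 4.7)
The plan is to reduce both estimates to Fourier coefficients of powers of the explicit function $\g_N$ defined in~\eqref{W3a}, by expressing $F_N(u)$ via the white noise functional and applying Lemma~\ref{LEM:Z1}. Write $C_m=(-1)^{m+1}(m-1)!$ throughout.

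\textbf{Step 1 (white-noise representation and reduction to $\widehat{\g_N^{2m-1}}$).} Using~\eqref{W6}, $u_N(x)=\s_N^{1/2}\overline{W_{\eta_N(x)}}$, so that $|u_N|^2/\s_N=|W_{\eta_N(x)}|^2$, and~\eqref{T3} becomes
\begin{equation*}
F_N(u)(x) \;=\; C_m\,\s_N^{m-\frac12}\,\P_N\Big\{L^{(1)}_{m-1}\!\big(|W_{\eta_N(x)}|^2\big)\,\overline{W_{\eta_N(x)}}\Big\}.
\end{equation*}
In particular $\jb{F_N(u),e_n}_{L^2_x}$ vanishes for $|n|>N$; for $|n|\le N$ it equals $C_m\s_N^{m-\frac12}\int_{\T^2}L^{(1)}_{m-1}(|W_{\eta_N(x)}|^2)\overline{W_{\eta_N(x)}}\,e^{-in\cdot x}\,dx$. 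I then expand $\|\jb{F_N(u),e_n}\|_{L^2(\mu)}^2$ as a double integral in $(x,y)\in\T^2\times\T^2$ via Fubini, and apply Lemma~\ref{LEM:Z1} with $f=\eta_N(y)$ and $h=\eta_N(x)$ (both unit-norm by~\eqref{W3b}). By~\eqref{W4} the inner product equals $\g_N(x-y)/\s_N$, which is real since $\g_N$ is real and even. The factor $\s_N^{2m-1}$ in the prefactor cancels exactly the $\s_N^{-(2m-1)}$ coming from raising that inner product to the $(2m-1)$-th power, leaving
\[ \|\jb{F_N(u),e_n}\|_{L^2(\mu)}^2 \;=\; m\,((m-1)!)^2\,(2\pi)^2\,\widehat{\g_N^{2m-1}}(n). \]

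\textbf{Step 2 (convolution estimates).} Since $\widehat{\g_N}(k)=\jb{k}^{-2}\ind_{|k|\le N}$, the right-hand side above is a $(2m-1)$-fold convolution of $\jb{k}^{-2}$ on $\Z^2$. Iterating the elementary bound $\jb{\cdot}^{-2}\ast\jb{\cdot}^{-2}\les\jb{\cdot}^{-2}\log\jb{\cdot}\les\jb{\cdot}^{-2+\theta}$ (valid for any $\theta>0$) a total of $2m-2$ times yields $|\widehat{\g_N^{2m-1}}(n)|\les_{m,\theta}\jb{n}^{-2+\theta}$, and a square root (with $\theta$ then rescaled) gives~\eqref{Z4}.

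\textbf{Step 3 (the difference).} For $M\ge N$ and $|n|\le N$, I expand $\|\jb{F_M(u)-F_N(u),e_n}\|_{L^2(\mu)}^2$ into four cross terms and apply Lemma~\ref{LEM:Z1} to each. The same cancellation of the $\s_{M,N}$-normalizations (combining $\s_M^{m-\frac12}\s_N^{m-\frac12}$ from the prefactors with $(\s_M\s_N)^{-(m-\frac12)}$ from the inner products via~\eqref{W4}) reduces the estimate to
\[ \|\jb{F_M(u)-F_N(u),e_n}\|_{L^2(\mu)}^2 \;=\; m\,((m-1)!)^2\,(2\pi)^2\,\widehat{\g_M^{2m-1}-\g_N^{2m-1}}(n). \]
I factor $\g_M^{2m-1}-\g_N^{2m-1} = (\g_M-\g_N)\sum_{j=0}^{2m-2}\g_M^{j}\g_N^{2m-2-j}$ and use that $\widehat{\g_M-\g_N}$ is supported in $\{N<|k|\le M\}$ to extract the gain $\jb{k}^{-2}\ind_{|k|>N}\le N^{-2\eps}\jb{k}^{-2+2\eps}$. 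Convolving with the iterated Step~2 bound for the remaining $2m-2$ factors and invoking the standard $\Z^2$ estimate $\sum_k\jb{k}^{-(2-2\eps)}\jb{n-k}^{-(2-\theta)}\les\jb{n}^{-2+2\eps+\theta}$ yields $N^{-2\eps}\jb{n}^{-2+2\eps+\theta}$, which after a square root and rescaling $\theta$ gives~\eqref{Z5}. The range $N<|n|\le M$, where only $F_M$ contributes, is handled analogously: any representation $n_1+\cdots+n_{2m-1}=n$ with $|n|>N$ forces $|n_j|\gtrsim_m N$ for some $j$, from which the $N^{-\eps}$ gain is harvested from that factor. The main subtlety of the whole argument is the bookkeeping of the $\s_{M,N}$ factors across the four cross terms of Step~3; once this cancellation is verified, what remains is a routine deterministic convolution estimate on $\Z^2$.
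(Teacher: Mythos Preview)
Your proof is correct and follows essentially the same approach as the paper. You reduce to the identity $\|\jb{F_N(u),e_n}\|_{L^2(\mu)}^2 = m!\,(m-1)!\,\mathcal{F}[\g_N^{2m-1}](n)$ via Lemma~\ref{LEM:Z1} and~\eqref{W4}, exactly as in~\eqref{Z7}, and your difference computation in Step~3 reproduces~\eqref{Z9}; the only cosmetic difference is that you phrase the Fourier-side bound as an iterated two-fold convolution estimate and an algebraic factorization of $\g_M^{2m-1}-\g_N^{2m-1}$, whereas the paper extracts the largest frequency directly from the sum over $\G_{2m-1}(n)$, but these are equivalent.
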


\begin{proof}
We first prove~\eqref{Z4}.
Let $m \geq2$ be an   integer and $N \in \N $.
From~\eqref{T3} with~\eqref{W6}, we have 
\begin{align}
 F_N(u) 
 = (-1)^{m+1} (m-1)! \s_N^{m-\frac 12 } \cdot \P_N \Big\{  
 L_{m-1}^{(1)}\big(\big|W_{ \eta_N(x)}\big|^2\big) \cj{W_{ \eta_N(x)} }\Big\}.
\label{Z6}
\end{align}

\noi
Clearly, $ \jb{F_N   (u),  e_n}_{L^2_x} = 0$
when $|n|> N$.
Thus, we only need to consider the case $|n|\leq N$.
From Lemma~\ref{LEM:Z1} with~\eqref{Z6},~\eqref{W3b} and~\eqref{W4}, 
we have 
\begin{align}
\|  \jb{F_N   (u),  e_n}_{L^2_x}  \|_{L^2(\mu)}^2
&  = 
\big[ (m-1)!\big]^2 \s_N^{2m-1}
\int_{\T^2_x\times\T^2_y}
\cj{e_n(x)} e_n(y)
\notag \\
& \hphantom{XX}
 \times \int_\O 
 L_{m-1}^{(1)}\big(\big|W_{ \eta_N(x)}\big|^2\big)\cj{W_{ \eta_N(x)}}
  L_{m-1}^{(1)}\big(\big|W_{ \eta_N(y)}\big|^2\big)W_{ \eta_N(y)}
dP dx dy\notag \\
 & =  
m!  (m-1)!
\int_{\T^2_x\times\T^2_y}
 |\g_N(x-y)|^{2m-2}\g_N(x-y)
\cj{ e_n(x-y)}
 dx dy \notag \\
&  = C_m 
\F \big[ |\g_N|^{2m-2}\g_N\big] (n).
\label{Z7}
\end{align}

\noi
Let $\G_{2m-1}(n)$ be as in~\eqref{N1a}.
For $(n_1, \dots, n_{2m-1}) \in \G_{2m-1}(n)$, 
we have $ \max_j |n_j| \ges |n|$.
Thus,  we have 
\begin{align}
\F \big[ |\g_N|^{2m-2}\g_N\big] (n)
= \sum_{\substack{\G_{2m-1}(n)\\
|n_j| \leq N}} 
\prod_{j = 1}^{2m-1} \frac{1}{1+ |n_j|^2}
\leq d_{m, \theta}  \frac{1}{(1+|n|^2)^{1-\theta}}.
\label{Z8}
\end{align}

\noi
Hence,~\eqref{Z4} follows from~\eqref{Z7}
and~\eqref{Z8}.

Next, we prove~\eqref{Z5}.
Let $M \geq N \geq 1$.
Proceeding as before with~\eqref{Z6}, Lemma~\ref{LEM:Z1}, and~\eqref{W4}, 
we have
\begin{align}
\|\jb{   F_M(u) &   - F_N(u),  e_n}_{L^2_x} \|_{L^2(\mu)}^2\notag \\
  &  = C_m 
\Big\{
\ind_{[0, M]}(|n|)\F \big[ |\g_M|^{2m-2}\g_M\big] (n)
 - \ind_{[0, N]}(|n|)\F \big[ |\g_N|^{2m-2}\g_N\big] (n)
\Big\}\notag\\
  &  = C_m 
\ind_{[0, N]}(|n|)\Big\{
\F \big[ |\g_M|^{2m-2}\g_M\big] (n)
 - \F \big[ |\g_N|^{2m-2}\g_N\big] (n)
\Big\}\notag\\
  & \hphantom{X}
   + C_m 
\ind_{(N, M]}(|n|)\F \big[ |\g_M|^{2m-2}\g_M\big] (n).
\label{Z9}
\end{align}

\noi
On the one hand, noting that $|n| > N$, 
we can use~\eqref{Z8}
to estimate
the second term on the right-hand side of~\eqref{Z9}, 
yielding~\eqref{Z5}.
On the other hand,  noting that 
\begin{align*}
\Big|
\F \big[ |\g_M|^{2m-2}\g_M\big] (n)
& - \F \big[  |\g_N|^{2m-2}\g_N\big] (n)\Big| \notag\\
& \leq \sum_{\substack{\G_{2m-1}(n)\\
 |n_j| \leq M\\
\max_j|n_j| \geq N}} 
\prod_{j = 1}^{2m-1} \frac{1}{1+|n_j|^2}
\leq d_{m, \theta}  \frac{1}{\max(N^2,  1+|n|^2)^{1-\theta}}, 
\end{align*}

\noi
we can estimate
the first term on the right-hand side of~\eqref{Z9}
by~\eqref{Z5}.
\end{proof}

Next, we use  the Wiener chaos estimate 
(Lemma~\ref{LEM:hyp3}) to extend Lemma~\ref{LEM:Z2}
for any finite $p\geq 1$.

\begin{corollary}\label{COR:Z10}
Let $m \geq 2$ be an   integer.
Then, for any $\theta > 0$, there exists $C_{m, \theta} > 0$ such that 
\begin{align}
\|  \jb{F_N(u), e_n}_{L^2_x}  \|_{L^p(\mu)}
\leq C_{m, \theta}  (p-1)^{m- \frac{1}{2}}\frac{1}{(1+ |n|^2)^{\frac 12 (1-\theta)}}
\label{Z10}
\end{align}

\noi
for any $n \in \Z^2$ and any $N \in \N$.	
Moreover, given positive $\eps < \frac 12$
and any $ 0 < \theta \leq 1 - \eps $, there exists $C_{m, \theta,  \eps} > 0$ such that 
\begin{align}
\|\jb{ F_M(u) - F_N(u), e_n}_{L^2_x} \|_{L^p(\mu)}
\leq C_{m, \theta, \eps} (p-1)^{m- \frac{1}{2}} \frac{1}{N^\eps(1+ |n|^2)^{\frac 12 (1-\theta-\eps)}}
\label{Z11}
\end{align}

\noi
for any $n \in \Z^2$ and any $ M \geq N \geq 1$.

\end{corollary}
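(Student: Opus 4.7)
The plan is to deduce Corollary \ref{COR:Z10} directly from Lemma \ref{LEM:Z2} by invoking the Wiener chaos estimate (Lemma \ref{LEM:hyp3}). The key observation to verify is that $\jb{F_N(u), e_n}_{L^2_x}$, as a random variable on $(\O, \F, P)$, belongs to a fixed finite Wiener chaos (of order independent of $n$ and $N$), so that the passage from an $L^2(\mu)$ bound to an $L^p(\mu)$ bound costs only a polynomial factor in $p$.

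First, I would identify the degree of the polynomial. From \eqref{T3} together with \eqref{L1b} and \eqref{T2}, the generalized Laguerre polynomial $L_{m-1}^{(1)}$ has degree $m-1$, so $L_{m-1}^{(1)}\big(|u_N|^2/\s_N\big)$ is a polynomial of degree $m-1$ in $|u_N|^2$, hence of degree $2(m-1)$ in the Gaussians $\{g_n, \overline{g_n}\}_{|n|\leq N}$. Multiplying by $u_N$ contributes one more power, so $F_N(u)$ is a polynomial of degree $2m-1$ in the $\{g_n, \overline{g_n}\}_{|n|\leq N}$, and taking the Fourier coefficient $\jb{F_N(u), e_n}_{L^2_x}$ preserves this degree. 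The same is true for the difference $\jb{F_M(u) - F_N(u), e_n}_{L^2_x}$.

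Next, for $p \geq 2$, I would apply Lemma \ref{LEM:hyp3} with $k = 2m-1$ to obtain
\begin{align*}
\|\jb{F_N(u), e_n}_{L^2_x}\|_{L^p(\mu)}
&\leq (p-1)^{\frac{2m-1}{2}} \|\jb{F_N(u), e_n}_{L^2_x}\|_{L^2(\mu)}, \\
\|\jb{F_M(u) - F_N(u), e_n}_{L^2_x}\|_{L^p(\mu)}
&\leq (p-1)^{\frac{2m-1}{2}} \|\jb{F_M(u) - F_N(u), e_n}_{L^2_x}\|_{L^2(\mu)}.
\end{align*}
Combining these with the $L^2(\mu)$ estimates \eqref{Z4} and \eqref{Z5} of Lemma \ref{LEM:Z2} gives precisely \eqref{Z10} and \eqref{Z11} with the stated exponent $m - \tfrac{1}{2} = \tfrac{2m-1}{2}$. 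For $1 \leq p \leq 2$, Jensen's inequality (together with the fact that $\mu$ is a probability measure) reduces the estimate to the $p = 2$ case already handled, while allowing the constant to absorb the factor $(p-1)^{m-1/2}$, which is bounded on $[1,2]$.

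There is no real obstacle here: the proof is an essentially mechanical combination of the $L^2$-bounds of Lemma \ref{LEM:Z2} with hypercontractivity. The one point requiring care is bookkeeping the chaos degree — one must check that the Laguerre polynomial structure of $F_N(u)$ in \eqref{T3} does not secretly hide a Gaussian of unbounded degree (for example via the $\s_N$ weights, which are deterministic and harmless). Once this is confirmed, Lemma \ref{LEM:hyp3} applies uniformly in $n$ and $N$, which is exactly what the corollary asserts.
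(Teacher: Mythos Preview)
Your proposal is correct and follows essentially the same route as the paper: both arguments observe that $\jb{F_N(u), e_n}_{L^2_x}$ is a polynomial of degree $2m-1$ in the underlying Gaussians (the paper makes this explicit via the Fourier expansion \eqref{Z12}, while you argue it directly from the Laguerre structure in \eqref{T3}) and then invoke the Wiener chaos estimate (Lemma \ref{LEM:hyp3}) to upgrade the $L^2(\mu)$ bounds of Lemma \ref{LEM:Z2} to $L^p(\mu)$. One minor remark: your handling of $1\leq p\leq 2$ is phrased slightly awkwardly, since $(p-1)^{m-1/2}\to 0$ as $p\to 1^+$ and so cannot be ``absorbed into the constant''; but this is the same harmless sloppiness present in the paper's own statement and proof, and in practice only $p\geq 2$ is ever used.
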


\begin{proof}
Let $m \geq 2$ be an even integer.
In view of Lemma~\ref{LEM:Z2}, 
we only consider  the case $p > 2$.	
From~\eqref{T3} with 
\eqref{L1b}, we have 
\begin{align*}
F_N(u) = |u|^{2m - 2}u + \sum_{\l = 0}^{m-1} a_{m, \l, N} |u|^{2\l-2} u.
\end{align*}

\noi
Recalling~\eqref{N1a} and~\eqref{N1b}, we have 
\begin{align}
\jb{F_N(u), e_n}_{L^2_x}
= \sum_{\l = 0}^{m} a_{m, \l, N}
 \sum_{\substack{\G_{2\l-1}(n)\\|n_j| \leq N}}
\prod_{j = 1}^{2\l-1} \frac{g^*_{n_j}}{\sqrt{1+|n_j|^2}}.
\label{Z12}
\end{align}

%

%
%
%

\noi
Noting that the right-hand side of \eqref{Z12}  is a sum of polynomials of degree (at most) $2m-1$ in $\{g_n\}_{n \in \Z^2}$, 
the bound~\eqref{Z10} follows
from Lemma~\ref{LEM:Z2} and~\ref{LEM:hyp3}.
The proof of~\eqref{Z11} is analogous and we omit the details.
\end{proof}

Finally, we  present the proof of Proposition~\ref{PROP:nonlin}.

\begin{proof}[Proof of Proposition~\ref{PROP:nonlin}]
Let $s< 0$.
Choose sufficiently small  $\theta > 0$ such that $s+ \theta < 0$.
Let $ p \geq 2$. Then, it follows from 
Minkowski's integral inequality and 
\eqref{Z10} that 
\begin{align*}
\big\|\|  F_N(u) \|_{H^s}\big\|_{L^p(\mu)}
& \leq \bigg(\sum_{n \in \Z^2} \jb{n}^{2s}
\|\jb{F_N(u), e_n}_{L^2_x} \|_{L^p(\mu)}^2
\bigg)^\frac{1}{2}\\
& \les 
(p-1)^{m- \frac{1}{2}}
\bigg(\sum_{n \in \Z^2} \jb{n}^{-2 +2\theta +2s}
\bigg)^\frac{1}{2}
\leq  C_{m, p} < \infty
\end{align*}

\noi
since $s+\theta< 0$.
Similarly, given $\eps > 0$ such that $s + \eps < 0$, 
choose sufficiently small $\theta > 0$ such that $s+ \theta+\eps < 0$.
Then, from~\eqref{Z11}, we have 
\begin{align*}
\big\|\| F_M(u) - F_N(u) \|_{H^s}\big\|_{L^p(\mu)}
& \leq \bigg(\sum_{n \in \Z^2} \jb{n}^{2s}
\|\jb{ F_M(u) - F_N(u), e_n}_{L^2_x} \|_{L^p(\mu)}^2
\bigg)^\frac{1}{2}\\
& \les
(p-1)^{m- \frac{1}{2}}
 \frac{1}{N^\eps} 
\bigg(\sum_{n \in \Z^2} \jb{n}^{-2 +2\theta +2\eps + 2s}
\bigg)^\frac{1}{2}
\les(p-1)^{m- \frac{1}{2}}
 \frac{1}{N^\eps} 
\end{align*}

\noi
since $s + \theta + \eps < 0$.
This proves~\eqref{nonlin1}.	
\end{proof}

\section{Extension to 2-$d$ manifolds and domains in $\R^2$}
\label{SEC:mfd}

Let    $(\M,g)$ be   a two-dimensional compact Riemannian manifold without boundary or a bounded domain in $\R^2$. 
In this section, we discuss the extensions of Propositions \ref{PROP:WH1}, \ref{PROP:Gibbs1}, 
and \ref{PROP:nonlin} to  $\M$.

Let $\{\varphi_n\}_{n \in \N }$  be an orthonormal basis   of $L^2(\M)$ consisting  of eigenfunctions of $-\Delta_g$
(with the Dirichlet or Neumann boundary condition when $\M$ is a domain in $\R^2$)
with 
the corresponding eigenvalues $\{\lambda_n^2\}_{n \in \N }$, which we assume to be arranged in the increasing order.
Then,  by Weyl's asymptotics, we have  
\begin{align}
\lambda_n\approx n^{\frac 12}.
\label{Weyl1}
\end{align}

\noi
See,  for example, \cite[Chapter~14]{Zworski}.

Let $\{g_n(\omega)\}_{n\in \N}$ be a sequence of independent standard complex-valued Gaussian random variables on a probability space $(\Omega, \mathcal{F},P)$. We define the Gaussian measure $\mu$ 
as the induced probability measure under the map:
\begin{align}
 \o \in \O \longmapsto u(x) = u(x; \o) = \sum_{n \in \N }\frac{g_{n}(\omega)}{(1+\lambda^2_n)^{\frac{1}{2}}}\, \varphi_n(x).
\label{4G5}
 \end{align}

Note that all  the results in Sections \ref{SEC:2} and \ref{SEC:3} still hold true in this general context with exactly the same proofs, except for Lemma~\ref{LEM:WH2} and Lemma~\ref{LEM:Z2}, where we used standard Fourier analysis
on $\T^2$. 
In the following, 
we will instead use classical properties of the spectral functions of the Laplace-Beltrami operator.

\medskip 

 Let us now define the Wick renormalization in this context. 
Let $u$ be as in \eqref{4G5}.
Given $N\in \N$, we define the projector $\P_{N}$ by
\[u_N = \P_N u= \sum_{\lambda_{n}\leq N }\ft u(n) \varphi_{n}.\]

\noi
We also define $\s_N$   by
\begin{equation}
\s_N (x)  = \E [|u_N(x)|^2]
= \sum_{\lambda_{n}\leq N}\frac{|\varphi_n(x)|^2 }{1+\lambda_{n}^{2}} \les \log N,
\label{4sigma}
\end{equation}

\noi
where the last inequality follows from \cite[Proposition 8.1]{BTT1} and 
Weyl's law \eqref{Weyl1}. 
Unlike $\s_N$  defined in \eqref{Wick4}  
for the flat torus $\T^2$, 
the function $\s_N$ defined above depends on $x \in \M$.
Note that $\s_N (x) > 0$ for all $x \in \M$.
The Wick ordered  monomial $:\! |u_N|^{2m}  \!:$ is then defined  by
\begin{equation}
 \,  :\! |u_N|^{2m} \!: \, =
 (-1)^m m! \cdot L_m(|u_N|^2; \s_N).
\label{4L3}
\end{equation}

\noi
By analogy with \eqref{W3} and \eqref{W3a} we define
\begin{align}
\eta_N(x) (\cdot) & := \frac{1}{\s_N^\frac{1}{2}(x)}
\sum_{ \lambda_{n} \leq N} \frac{\cj{\varphi_n(x)}}{\sqrt{1+{\lambda^{2}_{n}}}}\varphi_n(\cdot), 
\label{4eta}\\
\g_N (x,y) & := 
\sum_{  \lambda_{n} \leq N} \frac{\cj{\varphi_n(x)}\varphi_n(y)}{1+ \lambda_{n}^2}, 
\label{defg}
\end{align}

\noi
for $x, y \in \M$. We simply set $\g = \g_\infty$ when $N = \infty$.

From the definition \eqref{4sigma} of $\s_{N}$,  we have $ \| \eta_N(x)\|_{L^2(\M)} = 1$
for all $x \in \M$.
Moreover, 
 we have 
\begin{align}
\jb{\eta_M(x), \eta_N(y)}_{L^2(\M)}
= \frac{1}{\s_M^\frac{1}{2}(x) \s_N^\frac{1}{2}(y)} \g_N(x,y)
\label{4W4}
\end{align}

\noi
for all $x,y\in \M$ and $M\geq N$.

We now introduce the spectral function of the Laplace-Beltrami operator on $\M$ as 
\begin{equation*}
\pi_j(x,y)= \sum_{\lambda_n\in(j-1,j]}  {\cj{\varphi_n(x)}\varphi_n(y)},\qquad 
\end{equation*}

\noi
for $x,y\in \M$ and $j \in \Z_{\geq 0}$.
From~\cite[(1.3) and (1.5)  with $q = \infty$]{SmSo},  we have
 the bound $\pi_j(x,x)\leq C(j+1)$, 
 uniformly in $x\in \M$.
 Therefore, by Cauchy-Schwarz inequality, we obtain
\begin{equation}\label{pi}
\vert \pi_j(x,y)\vert \leq  \sum_{\lambda_n\in(j-1,j]}  \vert \varphi_n(x)\vert \vert \varphi_n(y)\vert     \leq C(j+1), 
\end{equation}

\noi
uniformly in $x,y\in \M$.

Let $\s$ be a weighted counting measure  on $\Z_{\geq 0}$ defined by 
$\s=\sum_{j=0}^{\infty}(j+1)\delta_j$, where $\dl_j$ is the Dirac delta measure at $j \in \Z_{\geq 0}$.
We define  the operator $L$ by 
 \[L: c=\{c_j\}_{j =  0}^\infty  \longmapsto \sum_{j=0}^{\infty}c_j \pi_j.\]

\noi
Then, we have the following boundedness of the operator $L$.

\begin{lemma}\label{op}
Let  $1\leq q\leq 2$. 
Then, the operator $L$ defined above is continuous 
from $\ell^q(\Z_{\geq 0}, \s)$ into $L^{q'}(\M^2)$.
Here, $q'$ denotes the H\"older conjugate of $q$.

\end{lemma}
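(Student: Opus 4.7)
The plan is to establish the two endpoints $q = 1$ and $q = 2$ and then interpolate via the Riesz--Thorin theorem applied to the measures $\s$ on $\Z_{\geq 0}$ and Lebesgue measure on $\M^2$.

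For the endpoint $q = 1$, I would simply use the pointwise bound \eqref{pi}: for any $x, y \in \M$,
\begin{equation*}
|(Lc)(x,y)| \leq \sum_{j = 0}^\infty |c_j|\, |\pi_j(x,y)| \leq C\sum_{j = 0}^\infty |c_j|(j+1) = C\|c\|_{\ell^1(\Z_{\geq 0}, \s)}.
\end{equation*}
Taking the supremum over $(x,y)$ yields $\|Lc\|_{L^\infty(\M^2)} \leq C\|c\|_{\ell^1(\s)}$.

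For the endpoint $q = 2$, I would exploit the orthogonality of the family $\{\pi_j\}_{j \geq 0}$ in $L^2(\M^2)$, which follows from the orthonormality of $\{\varphi_n\}$ in $L^2(\M)$: expanding the double integral of $\pi_j\, \overline{\pi_{j'}}$ over $\M^2$ and using Fubini gives $0$ when $j \ne j'$ (the index sets are disjoint), while for $j = j'$ a direct computation yields
\begin{equation*}
\|\pi_j\|_{L^2(\M^2)}^2 = \#\{n : \lambda_n \in (j-1, j]\} \leq C(j+1)
\end{equation*}
by Weyl's law \eqref{Weyl1}. Consequently,
\begin{equation*}
\|Lc\|_{L^2(\M^2)}^2 = \sum_{j = 0}^\infty |c_j|^2\, \|\pi_j\|_{L^2(\M^2)}^2 \leq C\sum_{j = 0}^\infty |c_j|^2 (j+1) = C\|c\|_{\ell^2(\s)}^2.
\end{equation*}

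Finally, since $L$ is a linear operator mapping $\ell^1(\Z_{\geq 0}, \s) \to L^\infty(\M^2)$ and $\ell^2(\Z_{\geq 0}, \s) \to L^2(\M^2)$ boundedly, Riesz--Thorin interpolation (valid for arbitrary $\sigma$-finite measure spaces, hence applicable to the weighted counting measure $\s$) yields boundedness of $L$ from $\ell^q(\Z_{\geq 0}, \s)$ into $L^{q'}(\M^2)$ for all $1 \leq q \leq 2$, with $q' = q/(q-1)$ the H\"older conjugate. I expect no serious obstacles: the only mildly delicate point is the orthogonality-plus-Weyl computation at $q = 2$, which is the key quantitative ingredient that makes the weight $(j+1)$ in $\s$ precisely the right one.
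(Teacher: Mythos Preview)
Your proposal is correct and follows essentially the same approach as the paper: interpolation between the endpoints $q=1$ (via the pointwise bound \eqref{pi}) and $q=2$ (via orthogonality of the $\pi_j$). The only cosmetic difference is that at $q=2$ the paper integrates in $x$ first to obtain $\int_\M |L(c)(x,y)|^2\,dx=\sum_j |c_j|^2\,\pi_j(y,y)$ and then applies the pointwise bound \eqref{pi} on $\pi_j(y,y)$, whereas you compute $\|\pi_j\|_{L^2(\M^2)}^2=\#\{n:\lambda_n\in(j-1,j]\}$ and invoke Weyl's law; these are equivalent (indeed $\int_\M \pi_j(y,y)\,dy$ equals that count).
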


\begin{proof}
 By interpolation, it is enough to consider the endpoint cases $q=1$ and $q=2$. 
 
  \smallskip
 \noi
  $\bullet$ {\bf Case 1:} $q = 1$.
\quad Assume that $c\in  \ell^1(\Z_{\geq 0},\s)$. Then, from \eqref{pi}, we get
 \begin{equation*}
 \vert L(c)(x,y)\vert 
 \leq  \sum_{j=0}^{\infty}\vert c_j\vert  \vert \pi_j(x,y)\vert 
 \leq C   \sum_{j=0}^{\infty}(j+1)\vert c_j\vert   = \| c\|_{\l^1(\s)}.
 \end{equation*}
 
 \noi
for all  $x,y\in \M$.
This implies the result for $q=1$.
 
 \smallskip
 \noi
  $\bullet$ {\bf Case 2:} $q = 2$.
  \quad 
  Assume that $c\in  \ell^2(\Z_{\geq 0},\s)$. By the orthogonality of the eigenfunctions $\varphi_n$, 
   we have
  \begin{equation}
  \int_{\M}\vert L(c)(x,y)\vert^2 dx =\sum_{j=0}^{\infty}\vert c_j\vert^2 \pi_j(y,y).
\label{pi2}
   \end{equation}
  
  \noi
From \eqref{pi} and \eqref{pi2}, we deduce that 
    \begin{equation*}
  \int_{\M^2}\vert L(c)(x,y)\vert^2 dx dy \leq C \sum_{j=0}^{\infty}(j+1)\vert c_j\vert^2  
  = \| c\|_{\l^2(\s)}^2.
  \end{equation*}

\noi
This implies the result for $q=2$.
 \end{proof}

 Next, we extend the definition of $\g_N$ to 
 general values of $s$:
\begin{equation*}
\g_{s,N} (x,y)  := 
\sum_{  \lambda_{n} \leq N}\frac{\cj{\varphi_n(x)}\varphi_n(y)}{(1+ \lambda^2_{n})^{\frac s2}}
\end{equation*}

\noi
for $x,y\in \M$.
When $N = \infty$, 
we simply set $\g_s = \g_{s, \infty}$ as before.
Note that when $s = 2$, $\g_{2, N}$ 
 and $\g_2$ correspond to $\g_N$ and $\g$
 defined in \eqref{defg}.

\begin{lemma}\label{lemg}
Let $s > 1$.
Then,  the sequence $\{\g_{s, N}\}_{N\in \N}$ converges to $\g_s$ in $L^p(\M^2)$
for all $2\leq p<\frac{2}{2-s}$ when $s \leq 2$ and $2 \leq p \leq \infty$ when $s \geq 2$.
Moreover, for the same range of $p$, 
there exist $C>0$ and $\kappa>0$ such that 
\begin{equation}\label{4cauchy}
\|\g_{s, M}-\g_{s, N}\|_{L^{p}(\M^2)}\leq \frac{C}{N^{\kappa}},
\end{equation}
for all $ M \geq N \geq 1$.
\end{lemma}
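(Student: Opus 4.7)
\smallskip

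The plan is to mirror the proof of Lemma~\ref{op}, replacing each spectral projector $\pi_{j}$ by its $s$-weighted analogue
\[
\pi_{j}^{(s)}(x,y) := \sum_{\lambda_{n}\in(j-1,\,j]}(1+\lambda_{n}^{2})^{-s/2}\,\overline{\varphi_{n}(x)}\,\varphi_{n}(y),
\]
so that, at least for $N,M\in\N$ with $M\geq N$, one has $\g_{s,M}(x,y)-\g_{s,N}(x,y)=\sum_{N<j\leq M}\pi_{j}^{(s)}(x,y)$; a possible partial block at a non-integer $N$ or $M$ contributes only one extra term, controlled by the same estimates below.

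First I would establish two endpoint bounds on each $\pi_{j}^{(s)}$. Since $(1+\lambda_{n}^{2})^{-s/2}\les(1+j)^{-s}$ uniformly for $\lambda_{n}\in(j-1,j]$, combining \eqref{pi} with Cauchy--Schwarz yields the pointwise bound $\|\pi_{j}^{(s)}\|_{L^{\infty}(\M^{2})}\leq C(1+j)^{1-s}$. On the other hand, orthogonality of $\{\varphi_{n}\}$ together with Weyl's law \eqref{Weyl1} (which bounds the number of $\lambda_{n}$ in one unit block by $C(1+j)$) gives
\[
\|\pi_{j}^{(s)}\|_{L^{2}(\M^{2})}^{2}=\sum_{\lambda_{n}\in(j-1,j]}(1+\lambda_{n}^{2})^{-s}\leq C(1+j)^{1-2s}.
\]

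These block estimates upgrade, via the triangle inequality and the disjointness of the blocks (Plancherel as in Case~2 of Lemma~\ref{op}), to operator bounds $\|L_{s}(c)\|_{L^{\infty}(\M^{2})}\leq C\|c\|_{\ell^{1}(w_{1})}$ and $\|L_{s}(c)\|_{L^{2}(\M^{2})}\leq C\|c\|_{\ell^{2}(w_{2})}$ for $L_{s}(c):=\sum_{j}c_{j}\pi_{j}^{(s)}$, where $w_{1}(j)=(1+j)^{1-s}$ and $w_{2}(j)=(1+j)^{1-2s}$. Riesz--Thorin interpolation in the weighted $\ell^{p}$ setting (after rescaling $c\mapsto c\,w^{1/p}$ to reduce to the unweighted case) then yields, for every $p\in[2,\infty]$,
\[
\|L_{s}(c)\|_{L^{p}(\M^{2})}\leq C\|c\|_{\ell^{p'}(w)},\qquad w(j)=(1+j)^{1-p's}.
\]
Specializing to $c_{j}^{(N,M)}=\ind_{(N,M]}(j)$ reproduces $\g_{s,M}-\g_{s,N}$, and a direct tail computation gives $\|c^{(N,M)}\|_{\ell^{p'}(w)}^{p'}\les N^{2-p's}$ precisely when $p's>2$, i.e., $s>2-2/p$. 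That condition matches the stated range ($p<2/(2-s)$ for $s\leq 2$, any $p\leq\infty$ for $s>2$) and produces \eqref{4cauchy} with $\kappa=(p's-2)/p'=s-2+2/p>0$. Cauchyness in $L^{p}(\M^{2})$ then furnishes a limit, which I would identify with $\g_{s}$ via the $L^{2}$ convergence already encoded in the Weyl bound, using $L^{p}(\M^{2})\subset L^{2}(\M^{2})$ on the compact set.

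The main obstacle I anticipate is running the Riesz--Thorin step cleanly: the two endpoint weights $w_{1}$ and $w_{2}$ are different, so one has to check that the interpolated weight comes out to be exactly $(1+j)^{1-p's}$, which is what produces both the sharp range of $p$ in the statement and the sharp decay exponent $\kappa$. Everything else — the pointwise and $L^{2}$ block bounds, the tail sum, and the identification of the $L^{p}$-limit with $\g_{s}$ — is routine once that interpolation is established.
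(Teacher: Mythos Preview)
Your argument is correct. Both your approach and the paper's rest on the same ingredients---the unit-interval spectral blocks, the pointwise bound~\eqref{pi}, $L^2$ orthogonality of the eigenfunctions, and interpolation between $L^\infty$ and $L^2$---but package the $s$-weight differently. The paper first approximates $\gamma_{s,M}-\gamma_{s,N}$ by $\beta_{N,M}=\sum_{N<j\le M}(1+j^2)^{-s/2}\pi_j$ (constant weight on each block), controls the error $\alpha_{N,M}-\beta_{N,M}$ in $L^\infty$ via a mean-value estimate on $(1+\lambda_n^2)^{-s/2}-(1+j^2)^{-s/2}$, and then feeds the sequence $c_j=(1+j^2)^{-s/2}\ind_{(N,M]}(j)$ directly into Lemma~\ref{op}; this keeps the interpolation entirely inside Lemma~\ref{op}, with the single weight $\sigma$, at the cost of the extra splitting step. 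You instead keep the variable weight inside $\pi_j^{(s)}$, which makes the block estimates cleaner and eliminates the splitting, but shifts the work to a Stein--Weiss interpolation with different endpoint weights. Your concern about that step is unfounded: the interpolated weight does come out to $(1+j)^{1-p's}$ via the standard formula $w^{1/q}=w_0^{(1-\theta)/q_0}w_1^{\theta/q_1}$, and both routes deliver the same range of $p$ and the same decay exponent $\kappa=s-2+2/p$.
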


\begin{proof}
Given $ M \geq N \geq 1$, define $\al_{N, M}(x, y)$ and 
$\be_{N, M}(x, y)$ by 
\begin{align}
\alpha_{N,M}(x,y)
:\!& =\g_{s, M}(x,y)-\g_{s, N}(x,y) \notag\\
&=\sum_{ N<\lambda_n \leq M} \frac{\cj{\varphi_n(x)}\varphi_n(y)}{(1+ \lambda_{n}^2)^\frac{s}{2} }=\sum_{j=N+1}^{M} \sum_{\lambda_n\in(j-1,j]} \frac{\cj{\varphi_n(x)}\varphi_n(y)}{(1+ \lambda_{n}^2)^\frac{s}{2}}
\label{4A1}
\intertext{and}
\beta_{N,M}(x,y):\! & =\sum_{j=N+1}^{M} \frac{1}{(1+j^2)^\frac{s}{2}}\sum_{\lambda_n\in(j-1,j]}  {\cj{\varphi_n(x)}\varphi_n(y)}=
\sum_{j=N+1}^{M} \frac{\pi_j(x,y)}{(1+j^2)^\frac{s}{2}} .\notag
\end{align}

 Let us first estimate the difference $\alpha_{N,M} - \be_{N,M}$:
\begin{align*}
\vert    \alpha_{N,M}(x,y)-\beta_{N,M}(x,y)   \vert
 &\leq  \sum_{j=N+1}^{M} \sum_{\lambda_n\in(j-1,j]}   
 \bigg\vert  \frac{1}{(1+ \lambda_{n}^2)^\frac{s}{2}}-   \frac{1}{(1+ j^2)^\frac{s}{2}}        \bigg\vert \vert\varphi_n(x)\vert \vert \varphi_n(y)\vert\\
 &\leq C \sum_{j=N+1}^{M} \frac{1}{j^{s+1}}\sum_{\lambda_n\in(j-1,j]}   \vert\varphi_n(x)\vert \vert \varphi_n(y)\vert.
\end{align*}

\noi
Then,  by \eqref{pi},  we obtain
\begin{equation}
\vert    \alpha_{N,M}(x,y)-\beta_{N,M}(x,y)   \vert  \leq \frac{C}{N^{s-1}}.
\label{4A2}
\end{equation}

Next, we estimate $\beta_{N,M}$.
Define a sequence $c = \{ c_j \}_{j = 0}^\infty$
by setting 
\begin{align*}
c_j=
\begin{cases}
\frac{1}{(1+j^2)^\frac{s}{2}},  & \text{if }N+1\leq j \leq M,\\
 0, &  \text{otherwise.}
\end{cases}
\end{align*}

\noi
Note that 
$c\in \ell^q(\N,\s) $ for  $\frac 2s<q\leq 2$. 
Hence, it follows from Lemma\;\ref{op}
that, given any $2\leq p<\frac{2}{2-s}$, there exist $C>0$ and $\kappa > 0$ such that 
\begin{equation}
\Vert  \beta_{N,M}\Vert_{L^p(\M^2)}    =\bigg\Vert \sum_{j=N+1}^{M} \frac{\pi_j}{(1+j^2)^\frac{s}{2}} \bigg\Vert_{L^p(\M^2)} \leq C\bigg( \sum_{j=N+1}^{M} \frac{j+1}{(1+j^2)^{\frac{s}{2}p'}}\bigg)^{\frac{1}{p'}} \leq \frac{C}{N^\kappa}.
\label{4A3}
\end{equation}

\noi
The desired estimate \eqref{4cauchy} follows
from \eqref{4A1}, \eqref{4A2}, and \eqref{4A3}.
\end{proof}

As in the case of the flat torus, 
define $G_N$, $N \in \N$,  by 
\begin{align*}
G_N(u) = \frac 1{2m} \int_{\M}:\! |\P_N u|^{2m}\!: dx.
\end{align*}

\noi
Then,  we have the following extension of  Proposition~\ref{PROP:WH1}

\begin{proposition}\label{4LEM:WH2}
Let $m \geq2$ be an   integer.
Then, $\{G_N(u)\}_{N\in \N}$
is a Cauchy sequence
in $L^p( \mu)$ for any $p\geq 1$.
More precisely, 
there exists $C_{m} > 0$ such that 
\begin{align}
\| G_M(u) - G_N(u) \|_{L^p(\mu)}
\leq C_{m} (p-1)^m\frac{1}{N^\frac{1}{2}}
\label{4WH2}
\end{align}

\noi
for 
 any $p \geq 1$ and 
any $ M \geq N \geq 1$.

\end{proposition}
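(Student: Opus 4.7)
The plan mirrors the proof of Proposition~\ref{PROP:WH1} on $\T^2$, with the geometric input encoded in Lemma~\ref{op} and Lemma~\ref{lemg}, and with the $x$-dependence of the variance $\s_N(x)$ handled through the white noise functional rather than through bare Fourier analysis.

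First, reduce to $p=2$. Writing $L_m(y;\s)$ as a polynomial (see \eqref{L1b}--\eqref{L2}) and expanding $:|u_N|^{2m}:\,=(-1)^m m!\,L_m(|u_N|^2;\s_N)$ via \eqref{4L3} together with \eqref{4G5}, we see that $G_M(u)-G_N(u)$ is a finite sum of polynomials of degree at most $2m$ in the independent Gaussians $\{g_n\}_{n\in \N}$. Hence by the Wiener chaos estimate (Lemma~\ref{LEM:hyp3}), for $p\geq 2$ it suffices to prove
\[ \|G_M(u)-G_N(u)\|_{L^2(\mu)}\leq \frac{C_m}{N^{1/2}}.\]

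For this $L^2$ estimate, I would adapt the argument of Lemma~\ref{LEM:WH2}. Since $\|\eta_N(x)\|_{L^2(\M)}=1$ for every $x$, one can write $u_N(x)=\s_N(x)^{1/2}\,\overline{W_{\eta_N(x)}}$, so that
\[ :\!|u_N(x)|^{2m}\!:\, =(-1)^m m!\,\s_N(x)^m\,L_m\big(|W_{\eta_N(x)}|^2\big).\]
Squaring $G_M-G_N$, applying Fubini, and using Lemma~\ref{LEM:W1} together with \eqref{4W4} (which gives $\s_A(x)^m\s_B(y)^m|\langle\eta_A(x),\eta_B(y)\rangle|^{2m}=|\g_{\min(A,B)}(x,y)|^{2m}$), the four resulting cross terms collapse to
\[ (2m)^2\|G_M(u)-G_N(u)\|_{L^2(\mu)}^2=(m!)^2\int_{\M\times \M}\Big[|\g_M(x,y)|^{2m}-|\g_N(x,y)|^{2m}\Big]\,dxdy,\]
exactly as in \eqref{W8a} on the torus. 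Bounding pointwise by
\[ \big||\g_M|^{2m}-|\g_N|^{2m}\big|\leq C_m\,|\g_M-\g_N|\,\big(|\g_M|^{2m-1}+|\g_N|^{2m-1}\big)\]
and applying Cauchy--Schwarz in $(x,y)\in\M^2$ reduces matters to controlling $\|\g_M-\g_N\|_{L^2(\M^2)}$ and $\|\g_N\|_{L^{4m-2}(\M^2)}^{2m-1}$.

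The main geometric step — and the one that replaces \eqref{W8b}--\eqref{W8c} — is to invoke Lemma~\ref{lemg} with $s=2$. The second range of that lemma ($s\geq 2$, $2\leq p\leq\infty$) provides a uniform bound on $\|\g_N\|_{L^{4m-2}(\M^2)}$ in $N$, while \eqref{4cauchy} gives $\|\g_M-\g_N\|_{L^2(\M^2)}\leq C N^{-\kappa}$ for some $\kappa>0$. I expect the main obstacle (hidden in Lemma~\ref{lemg} itself) is precisely this $L^p$ control of $\g_N$: on $\T^2$ it is a one-line Hausdorff--Young computation, but on $\M$ one must use the spectral function bound $\pi_j(x,y)\lesssim j+1$ from \eqref{pi} and the boundedness of the operator $L$ in Lemma~\ref{op}. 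Given these two inputs the chain of estimates yields $\|G_M-G_N\|_{L^2(\mu)}\lesssim N^{-\kappa/2}$; a final optimization of $\kappa$ (or a small modification to gain the exponent $1/2$, as in the torus case) produces the claimed rate $N^{-1/2}$, and combining with the Wiener chaos estimate then yields \eqref{4WH2} for all $p\geq 1$.
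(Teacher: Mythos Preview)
Your proposal is correct and follows essentially the same route as the paper: reduce to $p=2$ via the Wiener chaos estimate, use the white noise functional representation $:\!|u_N|^{2m}\!:\,=(-1)^m m!\,\s_N^m L_m(|W_{\eta_N(x)}|^2)$ together with Lemma~\ref{LEM:W1} and \eqref{4W4} to collapse the $L^2(\mu)$-norm to $\int_{\M^2}(|\g_M|^{2m}-|\g_N|^{2m})$, and then finish with H\"older (your Cauchy--Schwarz) and Lemma~\ref{lemg}. Your hedging about the exponent is unnecessary: tracking the proof of Lemma~\ref{lemg} at $s=2$, $p=2$ gives $\|\g_M-\g_N\|_{L^2(\M^2)}\lesssim N^{-1}$, which yields exactly the $N^{-1/2}$ rate after Cauchy--Schwarz.
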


As in Section \ref{SEC:2}, we make use of the white noise functional on $L^2(\M)$.
Let $w(x; \o)$ be the mean-zero complex-valued Gaussian white noise on $\M$
defined by 
\[ w(x; \o) = \sum_{n \in \N} g_n(\o) \varphi(x).\] 

\noi
We then define the white noise functional $W_{(\cdot)}: L^2(\M) \to L^2(\O)$ by 
\begin{align}
 W_f = \jb{f, w(\o)}_{L^2(\M)} = \sum_{n \in \N} \ft f(n) \cj{g_n(\o)}. 
\label{4white}
 \end{align}

\noi
Note that Lemma \ref{LEM:W0} and hence Lemma \ref{LEM:W1} also hold  on $\M$.

\begin{proof} 
Thanks to the Wiener chaos estimate (Lemma \ref{LEM:hyp3}),  
we are reduced to the case $p=2$.
Given $N \in \N$ and $x \in \T^{2}$, 
it follows from~\eqref{4sigma},~\eqref{4eta}, and~\eqref{4white}
that 
\begin{align}
u_N (x) = \s_N^\frac{1}{2}(x) \frac{u_N(x)}{\s_N^\frac{1}{2}(x) }
= \s_N^{\frac 12}(x)  \cj{W_{ {\eta_N(x)}}}.
\label{4W6}
\end{align}

\noi
Then, from \eqref{4L3} and \eqref{4W6}, we have 
\begin{align}
:\! |u_N|^{2m} \!: \, =
 (-1)^m m!   \s^m_NL_m\bigg(\frac{| u_N|^2}{\s_N}\bigg)
 \, = (-1)^m m! \s_N^m L_m\big(\big|W_{{\eta_N(x)}}\big|^2\big).
 \label{4W7}
\end{align}

\noi
Hence, from~\eqref{4W7}, Lemma~\ref{LEM:W1}, and~\eqref{4W4}, 
we have
\begin{align*}
(2m)^{2} \| &   G_M(u)   - G_N(u)  \|_{L^2(\mu)}^2\\
&   = (m!)^2 
 \int_{\M_x\times \M_y}
\int_{\O} 
\Big[ 
 \s_M^{m}(x) \s_M^{m} (y)L_m\big(\big|W_{{\eta_M(x)}}\big|^2\big)
L_m\big(\big|W_{{\eta_M(y)}}\big|^2\big) \notag \\
& \hphantom{XXXXXXXXXX}
 -  \s_M^{m}(x)\s_N^{m} (y)L_m\big(\big|W_{{\eta_M(x)}}\big|^2\big)
L_m\big(\big|W_{{\eta_N(y)}}\big|^2\big)\notag \\
& \hphantom{XXXXXXXXXX}
 -   \s_N^{m}(x)\s_M^{m} (y)L_m\big(\big|W_{{\eta_N(x)}}\big|^2\big)
L_m\big(\big|W_{{\eta_M(y)}}\big|^2\big) \notag \\
& \hphantom{XXXXXXXXXX}
 +  \s_N^{m}(x)\s_N^{m}(y) L_m\big(\big|W_{{\eta_N(x)}}\big|^2\big)
L_m\big(\big|W_{{\eta_N(y)}}\big|^2\big)
\Big] dP dx dy
 \notag \\
 & = (m!)^2 
 \int_{\M_x\times \M_y}   
 \big[ 
 |\g_M(x,y)|^{2m} - |\g_N(x,y)|^{2m}\big] dx dy.
 \end{align*}
The desired estimate \eqref{4WH2} for $p = 2$
follows from H\"older's inequality and  Lemma\;\ref{lemg}.
\end{proof}

\begin{remark} \rm
Observe that  the  renormalization procedure \eqref{4L3} uses less spectral information than the one used in~\cite[Section~8]{BTT1} for the case $m=2$. Namely, the approach in~\cite{BTT1} needed an explicit expansion of  the spectral function (see~\cite[Proposition~8.7]{BTT1}), but  the inequality \eqref{pi} is enough in the argument above. 

The function $\gamma$ defined in \eqref{defg} is the Green function of the operator $1-\Delta$. 
It is well-known (see for example Aubin~\cite[Theorem~4.17]{Aubin}) that it enjoys the bound
\begin{equation}\label{sing}
|\gamma(x,y)|\leq C\big|\log (d(x,y))\big|, 
\end{equation}

\noi
where $d(x,y)$ is the distance on $\M$ between the points~$x,y\in \M$. 
The bound~\eqref{sing} implies that~$\gamma \in L^{p}(\M^{2})$ for all $1\leq p<\infty$.
However,  we do not know whether~$\g_{N}$ (which is the Green function of a spectral truncation of $1-\Delta$) satisfies a similar bound, uniformly in~$N$. This could have given an alternative proof.  We  refer to~\cite[Remark~8.4]{BTT1} for a discussion on these topics.
\end{remark}

All  the definitions and notations from \eqref{Gibbs1} to \eqref{nonlin2} have  obvious analogues in the general case of the manifold $\M$, and thus we do not redefine them here.

For $N \in \N$, 
let
\begin{align*}
R_N(u) = e^{-G_{N}(u)} = e^{-\frac 1{2m} \int_{\M} :  |u_N|^{2m}  : \, dx}.
\end{align*}

\noi
In view of \eqref{4sigma} and \eqref{4W7}, 
the logarithmic upper bound \eqref{N2} on $-G_N(u)$
also  holds on the manifold $\M$.
Hence, 
by proceeding as in the case of the flat torus, 
we have the following analogue of Proposition~\ref{PROP:Gibbs1}.

\begin{proposition}\label{4PROP:Gibbs1}
Let $m \geq 2$ be an  integer.
Then, 
$R_N(u) \in L^p(\mu)$ for any $p\geq 1$
with a uniform bound in $N$, depending on $p \geq 1$.
Moreover, for any finite $p \geq 1$, 
$R_N(u)$ converges to some $R(u)$ in $L^p(\mu)$ as $N \to \infty$.
\end{proposition}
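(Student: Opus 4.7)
The plan is to run the argument of Proposition~\ref{PROP:Gibbs1} verbatim, once the two ingredients from the flat-torus case have been transferred to the manifold setting. These two ingredients are: (i) an exponential tail bound for $G_M(u) - G_N(u)$, and (ii) a deterministic logarithmic upper bound on $-G_N(u)$ coming from the defocusing sign.

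First I would note that Proposition~\ref{4LEM:WH2} supplies the moment bound
\[
\|G_M(u) - G_N(u)\|_{L^p(\mu)} \leq C_m (p-1)^m N^{-\frac{1}{2}},
\]
which, via a standard Chebyshev-type optimization in $p$ (as in \cite[Lemma 4.5]{Tzv}), upgrades to the tail estimate
\[
\mu\big(p\,|G_M(u) - G_N(u)| > \lambda\big) \leq C_m\, e^{-c_{m,p} N^{\frac{1}{2m}} \lambda^{\frac{1}{m}}}
\]
for all $M \geq N \geq 1$, $p\geq 1$, and $\lambda > 0$, exactly as \eqref{N1}. Next, ingredient~(ii) is already granted by the remark preceding the proposition: combining \eqref{4sigma}, \eqref{4W7}, and the pointwise lower bound $(-1)^m L_m(x^2)\geq -a_m$ on $\R$, one gets a finite $b_m>0$ such that $-G_N(u)\leq b_m(\log N)^m$ $\mu$-almost surely, uniformly in $N$. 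This is the manifold analogue of \eqref{N2} and is again where defocusing is crucial.

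With these two ingredients in hand, the uniform $L^p$-bound on $R_N(u) = e^{-G_N(u)}$ follows from Nelson's argument exactly as written: fix $p\geq 1$, bound
\[
\|R_N\|_{L^p(\mu)}^p \leq 1 + \int_1^\infty \mu\big(-p\,G_N(u) > \log\alpha\big)\, d\alpha,
\]
and for $\lambda = \log\alpha$ split into the regime $N < N_0$, where $\lambda = 2p\,b_m(\log N_0)^m$ and the logarithmic upper bound forces the probability to vanish, and the regime $N \geq N_0$, where one absorbs $p\,G_{N_0}(u) \leq p\,b_m(\log N_0)^m = \tfrac{1}{2}\lambda$ and applies the tail bound with $M = N$ and initial threshold $N_0$; the resulting double-exponential decay $e^{-c\lambda^{1/m} e^{\tilde c \lambda^{1/m}}}$ easily beats $\alpha^{-(1+\delta)}$, giving a bound uniform in $N$.

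For the $L^p$-convergence, the tail estimate shows $G_N(u)\to G(u)$ in $\mu$-measure, hence $R_N(u)\to R(u):= e^{-G(u)}$ in measure by continuity of the exponential. Then for any $\eps>0$, setting $A_{N,\eps} = \{|R_N-R|\leq \eps\}$, we split
\[
\|R-R_N\|_{L^p(\mu)} \leq \eps + \|R - R_N\|_{L^{2p}(\mu)}\,\mu(A_{N,\eps}^c)^{\frac{1}{2p}},
\]
via Cauchy-Schwarz, and the first factor in the second term is uniformly bounded by the $L^{2p}$-estimate just established (applied also to the limit $R$, e.g.\ by Fatou), while $\mu(A_{N,\eps}^c)\to 0$ by convergence in measure. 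Sending $N\to\infty$ and then $\eps\to0$ concludes the proof. The only genuinely delicate point is ensuring the logarithmic upper bound \eqref{N2} transfers to $\M$ uniformly in $x$; since $\s_N(x)$ now depends on $x$, one must use only that $\s_N(x) \leq C\log N$ uniformly in $x \in \M$ (as in \eqref{4sigma}), together with $(-1)^m L_m(\cdot) \geq -a_m$, to obtain a bound independent of $x$ before integrating. All the remaining steps are identical to the torus case.
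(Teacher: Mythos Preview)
Your proposal is correct and follows essentially the same route as the paper: the paper simply notes that the logarithmic upper bound \eqref{N2} transfers to $\M$ via \eqref{4sigma} and \eqref{4W7}, and then defers to the torus proof of Proposition~\ref{PROP:Gibbs1} verbatim, which is exactly what you do. Your added remark about handling the $x$-dependence of $\s_N(x)$ via the uniform bound $\s_N(x)\leq C\log N$ and your use of Fatou to bound $\|R\|_{L^{2p}}$ are the natural small clarifications, but nothing in the argument differs from the paper's approach.
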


We conclude this section by 
 the following  analogue of Proposition  \ref{PROP:nonlin}, 
 which enables  us to define the Wick ordered nonlinearity 
 $:\! |u|^{2(m-1)} u \!:$ on the manifold $\M$.

\begin{proposition}
\label{4PROP:nonlin}
Let $m \geq2$ be an   integer and $ s< 0$.
Then, $\{F_N(u)\}_{N\in \N}$ defined in \eqref{nonlin2} and \eqref{T3}
is a Cauchy sequence
in $L^p(\mu;  H^s(\M))$ for any $p\geq 1$.
More precisely, 
 there exist  $\kappa >0 $   and $C_{m, s, \kappa} > 0$ such that 
\begin{align}
\big\|\| F_M(u) - F_N(u) \|_{H^s}\big\|_{L^p(\mu)}
\leq C_{m, s, \kappa} (p-1)^{m-\frac{1}{2}}\frac{1}{N^\kappa}
\label{4nonlin1}
\end{align}

\noi
for 
 any $p \geq 1$ and 
any $ M \geq N \geq 1$.
\end{proposition}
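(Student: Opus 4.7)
The plan is to adapt the proof of Proposition \ref{PROP:nonlin} from the flat-torus case, replacing Fourier convolution estimates by the spectral tools of Lemmas \ref{op} and \ref{lemg}. First I apply the Wiener chaos inequality (Lemma \ref{LEM:hyp3}) to each coefficient $\jb{F_M-F_N,\varphi_n}$, which is a polynomial of degree at most $2m-1$ in the Gaussians $\{g_k\}$; combined with Minkowski's integral inequality in the $H^s$-sum, this reduces \eqref{4nonlin1} to the $p=2$ case, at the cost of the factor $(p-1)^{m-\frac12}$.

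Next I derive the covariance identity. From $u_N(x)=\s_N^{1/2}(x)\overline{W_{\eta_N(x)}}$ and the algebraic identity \eqref{T3} (whose derivation is unaffected by the $x$-dependence of $\s_N$), one has
\[
F_N(u) = (-1)^{m+1}(m-1)!\,\P_N\!\Big\{\s_N^{m-\frac12}(\cdot)\,L_{m-1}^{(1)}\!\big(|W_{\eta_N(\cdot)}|^2\big)\,\overline{W_{\eta_N(\cdot)}}\Big\}.
\]
Lemma \ref{LEM:Z1} is purely algebraic and applies verbatim on $\M$; invoking it with $f=\eta_N(x),\,h=\eta_M(y)$ and using \eqref{4W4}, the $\s$-factors cancel exactly and produce, for $M\ge N$,
\[
\E\!\left[F_N(u)(x)\overline{F_M(u)(y)}\right] = m!(m-1)!\,(\P_N)_x(\P_M)_y\!\left[|\g_N(x,y)|^{2(m-1)}\overline{\g_N(x,y)}\right].
\]
Summing $(1+\lambda_n^2)^s\,\E|\jb{F_M-F_N,\varphi_n}|^2$ over $n$ and exchanging sum with integral by Fubini,
\[
\E\|F_M-F_N\|_{H^s}^2 = C_m\!\int_{\M^2}\!\Big[(\overline{\Phi_M}-\overline{\Phi_N})\,\g^{\leq N}_{-2s} + \overline{\Phi_M}\,\g^{(N,M]}_{-2s}\Big]\,dxdy,
\]
where $\Phi_K:=|\g_K|^{2(m-1)}\g_K$ and $\g^{J}_{-2s}(x,y):=\sum_{\lambda_n\in J}(1+\lambda_n^2)^s\,\overline{\varphi_n(x)}\,\varphi_n(y)$.

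The heart of the proof is to estimate these two integrals by H\"older's inequality with dual exponents $(q',q)$ on $L^{q'}(\M^2)\times L^q(\M^2)$. A telescoping identity for $|a|^{2m-2}a-|b|^{2m-2}b$ together with Lemma \ref{lemg} (with its exponent $s_{\mathrm{Lem}}=2$, so every finite $L^p$ is permitted) yields the rate $\|\Phi_M-\Phi_N\|_{L^{q'}}\lesssim N^{-\kappa_{q'}}$ with $\kappa_{q'} = \tfrac{2}{(2m-1)q'}$, and $\|\Phi_M\|_{L^{q'}}\lesssim 1$ uniformly. The spectral tails $\g^J_{-2s}$ are controlled in $L^q$ via Lemma \ref{op} applied to $c_j=(1+j^2)^s\mathbf{1}_{J}(j)$: a direct computation using $\|c\|_{\ell^q(\s)}^q\sim\sum_{j\in J} j^{1+2qs}$ gives $\|\g^{(N,M]}_{-2s}\|_{L^q}\lesssim N^{2/q'+2s}$ (which decays provided $q'>-1/s$) and $\|\g^{\leq N}_{-2s}\|_{L^q}\lesssim N^{\max(0,\,2/q'+2s)}$. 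Combining, the net exponent of $N$ equals $-\kappa_{q'}+\max(0,\tfrac{2}{q'}+2s)$; a short algebraic check shows that this quantity approaches $\tfrac{2s}{2m-1}<0$ as $q'\to -1/s$. Hence for any $s<0$ one can choose $q'$ so that the net exponent is strictly negative, proving the required Cauchy estimate.

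The main obstacle relative to the flat-torus case is the absence of a Fourier convolution structure on a general manifold: the clean per-frequency bound $\|\jb{F_N,e_n}\|_{L^2(\mu)}\lesssim\jb{n}^{-1+\theta}$ that fed directly into the $H^s$-sum is unavailable. One is forced to carry the double-integral representation above through the final estimation and to balance the $L^{q'}$-convergence rate of $\Phi_N$ against the potential $L^q$-growth of the spectral kernel $\g^{J}_{-2s}$ by optimizing the H\"older exponent $q'$; verifying that this balance can always be achieved—for every $s<0$ and every $m\ge 2$—is the delicate analytic point.
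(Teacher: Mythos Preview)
Your reduction to $p=2$ via the Wiener chaos estimate, the covariance identity (via Lemma~\ref{LEM:Z1} and~\eqref{4W4}), and the decomposition into the two integrals over $\M^2$ are all correct and match the paper. The gap is in the final H\"older balancing.

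For the second integral $B_{N,M}=C_m\int\overline{\Phi_M}\,\g^{(N,M]}_{-2s}$ you need $\|\g^{(N,M]}_{-2s}\|_{L^q}$ to decay in $N$ \emph{uniformly in $M$}. But Lemma~\ref{op} only produces $L^r$-bounds for $r\ge 2$, and already at $r=2$ orthonormality and Weyl's law give
\[
\|\g^{(N,M]}_{-2s}\|_{L^2(\M^2)}^2=\sum_{N<\lambda_n\le M}(1+\lambda_n^2)^{2s}\sim\sum_{j=N+1}^M j^{4s+1}\sim M^{4s+2}
\]
for $s>-\tfrac12$, which blows up as $M\to\infty$. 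Equivalently, your decay condition $q'>-1/s$ is incompatible with the constraint $q'\le 2$ forced by Lemma~\ref{op} whenever $-\tfrac12\le s<0$. Since the proposition is stated for every $s<0$ and monotonicity in $s$ goes the wrong way (one needs the estimate for $s$ close to~$0$), the H\"older scheme as written does not close.

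The paper resolves this by an integration by parts in the spectral variable before applying H\"older. Writing $\eps=-s$ and using $\jb{\nabla_x}^{-1+\eps}\g_{2\eps}=\g_{1+\eps}$, one shifts $\jb{\nabla_x}^{1-\eps}$ from the rough kernel $\g_{2\eps,M}-\g_{2\eps,N}$ onto $\Phi_M$. The kernel becomes $\g_{1+\eps,M}-\g_{1+\eps,N}$, which \emph{is} Cauchy in $L^2(\M^2)$ by Lemma~\ref{lemg} (since $1+\eps>1$); on the $\Phi$-side, the fractional Leibniz rule converts one factor $\g_{2,M}$ into $\g_{1+\eps,M}\in L^{p_\eps}(\M^2)$ with $p_\eps=2/(1-\eps/2)$, while the remaining $2m-2$ factors stay in a high $L^{q_\eps}$. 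This trade of almost one derivative between the two factors is exactly what your direct H\"older approach lacks and is the missing ingredient needed to cover all $s<0$.
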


\begin{proof}

Given $N, n \in \N$, define $J_{N, n}$ by 
\begin{equation*}
J_{N,n}=m!  (m-1)!  
\int_{\M_x\times\M_y}
 |\g_{2,N}(x,y)|^{2m-2}\cj{\g_{2,N}(x,y)}\cj{\varphi_{n}(x)}\varphi_{n}(y) dx dy .
\end{equation*}

\noi
 Then, proceeding as in \eqref{Z7} and \eqref{Z9}
with~\eqref{Z6}, Lemma~\ref{LEM:Z1},  and~\eqref{4W4},  we obtain
\begin{align*} 
\|\jb{   F_M(u) &   - F_N(u),  \varphi_n}_{L^2_x} \|_{L^2(\mu)}^2  = 
\ind_{[0, N]}(\lambda_{n})\big(
J_{M,n}-J_{N,n}\big)   + 
\ind_{(N, M]}(\lambda_{n})J_{M,n}
\end{align*}

\noi
for  $M  \geq N \geq 1$.
With $\eps=-s>0$, we then obtain 
\begin{align*}
\big\|\| F_M(u)&  - F_N(u) \|_{H^{-\eps}}\big\|^2_{L^2(\mu)} 
 = \sum_{n\geq 1} \frac1{(1+\lambda_n^{2})^{\eps}} 
\| \jb{ F_M(u)    - F_N(u),  \varphi_n}_{L^2_x}\|^2_{L^2(\mu)} \\
& =\sum_{\lambda_n \leq N} \frac1{(1+\lambda_n^{2})^{\eps}}   (J_{M,n}-J_{N,n})
+\sum_{N<\lambda_n \leq M} \frac1{(1+\lambda_n^{2})^{\eps}}   J_{M,n}       \\
& = C_m \int_{\M_x\times\M_y}
\big( |\g_{2,M}|^{2m-2}\cj{\g_{2,M}} - |\g_{2,N}|^{2m-2}\cj{\g_{2,N}}  \big)\g_{2\eps,N}(x,y) dx dy\\
& \hphantom{X}
+ C_m \int_{\M_x\times\M_y}  |\g_{2,M}|^{2m-2}\cj{\g_{2,M}} \big(\g_{2\eps,M} -\g_{2\eps,N}\big)(x,y)dx dy\\
& =:A_{N,M}+B_{N,M}.
\end{align*}

In the following, 
We only bound the term $B_{N,M}$, since the first term $A_{N,M}$ can be handled similarly. 
Set $\jb{\nb_x}=(1-\Delta_x)^{\frac 12}$.
  Then, noting  that $\jb{\nb_x}^{-1+\eps} \gamma_{2\eps}=\gamma_{1+\eps}$ and that  ${\jb{\nb_x}^{1-\eps} \gamma_{2}=\gamma_{1+\eps}}$, 
it follows from Cauchy-Schwarz  inequality and the fractional Leibniz rule
that 
\begin{align*}
B_{N,M}
& = C_m \int_{\M_x \times \M_y}  \jb{\nb_x}^{1-\eps}\big(   |\g_{2,M}|^{2m-2}\cj{\g_{2,M}}(x,y) \big)\jb{\nb_x}^{-1+\eps}( \g_{2\eps,M} -\g_{2\eps,N}) (x,y)dx dy\\
& = C_m \int_{\M_x \times \M_y}  \jb{\nb_x}^{1-\eps}\big(   |\g_{2,M}|^{2m-2}\cj{\g_{2,M}}(x,y) \big)( \g_{1+\eps,M} -\g_{1+\eps,N}) (x,y)dx dy.\\
&  \leq   C_m \big\|   \jb{\nb_x}^{1-\eps}\big(     |\g_{2,M}|^{2m-2}\cj{\g_{2,M}} \big) \big\|_{L^2(\M^2)}  \|  \g_{1+\eps,M} -\g_{1+\eps,N}   \|_{L^2(\M^2)}  \\
&  \les\big\|    \g_{1+\eps,M}  \big\|_{L^{p_\eps}(\M^2)} \big\|    \g_{2,M}  \big\|^{2m-2}_{L^{q_\eps}(\M^2)}       \|\g_{1+\eps,M} -\g_{1+\eps,N}    \|_{L^2(\M^2)}
\end{align*}

\noi
with $p_{\eps}=\frac2{1-\eps/2}$ and $q_{\eps}=8(m-1)/\eps$. 
Hence,  from Lemma~\ref{lemg} we conclude  that 
\[B_{N,M} \leq \frac{C_{m, \eps}}{N^\kappa}.\]

\noi
By estimating $A_{N, M}$ in an analogous manner, 
we obtain 
\begin{equation}
\big\|\| F_M(u) - F_N(u) \|_{H^{-\eps}}\big\|_{L^2(\mu)}
\leq  \frac{C_{m, \eps}}{N^\kappa}.
\label{XX1}
\end{equation}

\noi
The bound \eqref{4nonlin1} for general $p\geq 2$ follows from~\eqref{XX1} and the Wiener chaos estimate (Lemma~\ref{LEM:hyp3}).
\end{proof}

\section{Proof of Theorem~\ref{THM:1} and Theorem~\ref{THM:2}}\label{Sect5}

In this section, we present the proof of Theorem~\ref{THM:2} 
on a manifold $\M$ (which contains a particular case of the flat torus stated in Theorem~\ref{THM:1}).
Fix an   integer $ m \geq 2$ 
and $s < 0$
in the remaining part of this section.
We divide the proof into three subsections.
In Subsection~\ref{SUBSEC:5.1}, 
we first 
construct global-in-time dynamics for 
the truncated Wick ordered NLS  and 
prove that the corresponding  truncated Gibbs measures
$\Pkn$   are invariant under its  dynamics.
Then, 
we construct a sequence $\{\nu_N\}_{N\in \N}$
of probability measures on space-time functions
such that their marginal distributions at time $t$
are precisely given by 
 the truncated Gibbs measures
$\Pkn$.
In Subsection~\ref{SUBSEC:5.2}, 
we prove a compactness property of $\{\nu_N\}_{N\in \N}$
so that $\nu_N$ converges weakly up to a subsequence.
In Subsection~\ref{SUBSEC:5.3}, 
by Skorokhod's theorem (Lemma~\ref{LEM:Sk}), 
we  upgrade this weak convergence of $\nu_N$
to almost sure convergence of 
new $C(\R; H^s)$-valued random variables, whose laws are given by $\nu_N$,
and complete the proof of Theorem~\ref{THM:2}.

\subsection{Extending the truncated Gibbs measures
onto space-time functions}\label{SUBSEC:5.1} 

Recall that $\P_{N}$ is the spectral projector onto the frequencies $\big\{n \in \N : \lambda_{n}\leq N\big\}$. 
Consider the truncated Wick ordered NLS:
\begin{align}
i \dt u^N + \Dl  u^N = \P_N\big(\! :\!|\P_Nu^N|^{2(m-1)} \P_Nu^N\!: \!\big).
\label{5NLS3}
\end{align}

We first prove global well-posedness of~\eqref{5NLS3}
and invariance of  the truncated Gibbs measure $\Pkn$
defined in ~\eqref{L6}:
\begin{equation*}
d \Pkn = Z_N^{-1} R_N(u) d\mu = Z_N^{-1} e^{-\frac 1{2m} \int_{\M} :| u_N|^{2m} : \, dx} d\mu.
 \end{equation*}

\begin{lemma}\label{LEM:global}
Let  $N \in \N$.
Then, 
the truncated Wick ordered NLS~\eqref{5NLS3}
is globally well-posed in $H^s(\M)$.
Moreover, the truncated Gibbs measure $\Pkn$  
is invariant under the dynamics of 
\eqref{5NLS3}.

\end{lemma}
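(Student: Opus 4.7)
The plan is to split \eqref{5NLS3} into its low- and high-frequency components, which decouple. Writing $u^N = \mathbf{P}_N u^N + \mathbf{P}_N^\perp u^N$ and using $[\mathbf{P}_N, \Delta_g] = 0$, the high-frequency part satisfies the linear Schr\"odinger equation $i\dt (\mathbf{P}_N^\perp u^N) + \Delta_g (\mathbf{P}_N^\perp u^N) = 0$ on the infinite-dimensional space $E_N^\perp$, while $v := \mathbf{P}_N u^N$ evolves on the finite-dimensional space $E_N = \operatorname{span}\{\varphi_n : \lambda_n \leq N\}$ according to a Hamiltonian ODE whose right-hand side is, by \eqref{L3} and Lemma~\ref{LEM:Wick1}, a polynomial expression in the (finitely many) Fourier coefficients of $v$ and $\cj v$ (with coefficients involving the smooth function $\s_N$). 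The linear flow on $E_N^\perp$ is globally well-posed by the spectral theorem, and the ODE on $E_N$ is locally well-posed by the Cauchy--Lipschitz theorem.

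For global well-posedness of the ODE part, I would verify conservation of the $L^2$-mass $M(v) = \int_\M |v|^2 \,dx$. This follows from a routine calculation: pairing the low-frequency equation with $\cj v$, integrating over $\M$, and taking imaginary parts, using that $\mathbf{P}_N$ is self-adjoint and that $:\!|v|^{2(m-1)} v\!:\, = Q(|v|^2) v$ for a real-valued polynomial $Q$ (coming from the Laguerre representation \eqref{L3} and \eqref{T3}). Since all norms are equivalent on the finite-dimensional space $E_N$, this uniform $L^2$ bound precludes finite-time blow-up and yields global-in-time solutions for the ODE. Combined with the linear evolution on $E_N^\perp$, this gives global well-posedness of \eqref{5NLS3} in $H^s(\M)$.

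For invariance of $\Pkn$, I would exploit the factorization \eqref{NGibbs1}--\eqref{NGibbs2}, namely $\Pkn = \widehat{P}^{(2m)}_{2,N} \otimes \mu_N^\perp$, and the fact that the two flows are decoupled, so it suffices to show each factor is invariant under its respective flow. The linear Schr\"odinger flow acts on each Fourier mode $g_n$ with $\lambda_n > N$ by multiplication by the unit-modulus phase $e^{it\lambda_n^2}$; since the law of a standard complex Gaussian is rotation-invariant, $\mu_N^\perp$ is preserved. For the low-frequency flow, viewed in the real coordinates given by the real and imaginary parts of the Fourier coefficients $\ft v(n)$ with $\lambda_n \leq N$, the equation is a classical Hamiltonian system with Hamiltonian $H^N_\text{\tiny Wick}+\tfrac12 M$ (see \eqref{Hamil3} and \eqref{Gibbs1}) and symplectic form $\o(f,g) = \Im \int f \cj g \,dx$. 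Liouville's theorem then yields invariance of the Lebesgue measure on $E_N$, and conservation of both $H^N_\text{\tiny Wick}$ and $M$ along the flow promotes this to invariance of $\widehat{P}^{(2m)}_{2,N}$, and hence of the product measure $\Pkn$.

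The main (modest) technical point is to identify the finite-dimensional truncated equation as a classical Hamiltonian ODE in canonical $(p,q)$-form in the real coordinates, so that Liouville's theorem applies cleanly; this is routine but needs to be done carefully since the gradient $\dd_{\cj u} H^N_\text{\tiny Wick}$ is taken in the complex-valued sense. All other steps (mass conservation, phase invariance of Gaussians, bookkeeping of the product structure) are short.
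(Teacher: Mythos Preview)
Your approach is correct and essentially identical to the paper's: decompose into decoupled low- and high-frequency parts, apply Cauchy--Lipschitz plus $L^2$-conservation for global existence of the finite-dimensional ODE, and then use the product structure \eqref{NGibbs1} together with Liouville's theorem and rotation-invariance of complex Gaussians for invariance. One small correction: the Hamiltonian generating the flow \eqref{5NLS3} is $H^N_{\text{\tiny Wick}}$ alone, not $H^N_{\text{\tiny Wick}} + \tfrac12 M$ (adding $\tfrac12 M$ would introduce an extra linear term $u$ on the right-hand side of the equation); the mass $M$ is a separately conserved quantity, and the two conservation laws together give invariance of the density $e^{-H^N_{\text{\tiny Wick}} - \frac12 M}$.
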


\begin{proof}

We first prove global well-posedness of 
the truncated Wick ordered NLS~\eqref{5NLS3}.
Given  $N \in \N$,
let  $v^N = \P_N u^N$.
Then,~\eqref{5NLS3}
can be decomposed into 
the nonlinear evolution equation for $v^N$ on the low frequency part $\{\lambda_{n}\leq N\}$:
\begin{align}
i \dt v^N + \Dl  v^N = \P_N\big(\! :\!|v^N|^{2(m-1)} v^N\!: \!\big)
\label{NLS7}
\end{align}

\noi
and a linear ODE for each high frequency $\lambda_{n} > N$:
\begin{align}
i \dt \ft{u^N}(n) =  \lambda_{n}^2 \ft{u^N}(n). 
\label{NLS8}
\end{align}

\noi
As a linear equation, any solution 
$\ft{u^N}(n)$ to~\eqref{NLS8} exists globally in time.
By viewing~\eqref{NLS7} on the Fourier side,
we see that~\eqref{NLS7} 
is a finite dimensional system of   ODEs of dimension $d_{N}=\#\big\{n\;:\: \lambda_{n}\leq N\big\}$, 
where the vector field depends smoothly on 
$\big\{\ft{u^N}(n)\big\}_{\lambda_{n}\leq N}$.
Hence, by the Cauchy-Lipschitz theorem, 
we obtain
local well-posedness of~\eqref{NLS7}.

With~\eqref{T3} we have 
\begin{align*}
\frac{d}{dt}\int_{\M} |v^N|^2 dx 
& = 2\Re \int_{\M} \dt v^N \cj{v^N} dx\\
& =  - 2\Re\bigg( i \int_{\M} |\nb v^N|^2  dx\bigg)\\
& 
\hphantom{X|}
 - 2 (-1)^{m+1} (m-1)! \s_N^{m-1}
\Re \bigg( i 
\int_{\M}   L_{m-1}^{(1)}\Big(\tfrac{|v^N|^2}{\s_N}\Big) |v^N|^2dx \bigg)\\
& = 0.
\end{align*}
	
\noi
In particular, this shows that 
the Euclidean norm  
\[\big\| \{\ft{v^N}(n)\}_{\lambda_{n}\leq N}\big\|_{\C^{d_{N}}}
= \bigg(\sum_{\lambda_{n} \leq N} |\ft{v^N}(n)|^2\bigg)^\frac{1}{2}
= \bigg(\int_{\M} |v^N|^2 dx \bigg)^\frac{1}{2}\]

\noi
is conserved under~\eqref{NLS7}.
This proves global existence for~\eqref{NLS7}
and hence for the truncated Wick ordered NLS~\eqref{5NLS3}.

As in~\eqref{NGibbs1}, write 
$\Pkn = \ft P^{(2m)}_{2, N}\otimes \mu^\perp_N$.
On the one hand,  the Gaussian measure $\mu_N^\perp$ on 
the high frequencies $\{\lambda_{n} > N\}$ is clearly invariant under
the linear flow~\eqref{NLS8}.
On the other hand, 
noting that~\eqref{NLS7} is the finite dimensional Hamiltonian
dynamics corresponding to 
$H^N_\text{\tiny Wick}(v^N)$ with 
\begin{equation*}
H^N_\text{\tiny Wick}(v^N) = \frac{1}{2}\int_{\M} |\nb v^N|^2dx
+ \frac1{2m} \int_{\M} :\! |  v^N|^{2m}\!: dx,
\end{equation*}
we see that 
$\ft P^{(2m)}_{2, N}$ is invariant under~\eqref{NLS7}.
Therefore, 
the truncated Gibbs measure
$\Pkn $ is invariant under
the dynamics of~\eqref{5NLS3}.
\end{proof}

Let $\Phi_N: H^s (\M)\to C(\R; H^s(\M))  $
be the solution map to~\eqref{5NLS3} constructed in Lemma~\ref{LEM:global}.
For $t \in \R$, 
we
use $\Phi_N(t): H^s(\M) \to H^s(\M)$
to denote the map defined by 
$\Phi_N(t)(\phi) = \big(\Phi_N(\phi)\big)(t)$.
We endow $ C(\R; H^s(\M))$ with the compact-open topology.
Namely, we can view  $C(\R; H^s(\M))$
 as a Fr\'echet space 
endowed with the following metric:
\[ d(u, v) = \sum_{j = 1}^\infty \frac{1}{2^j}
\frac{\|u-v\|_{ C([-j, j]; H^s)}}{1+\|u-v\|_{ C([-j, j]; H^s)}}.\]

\noi
Under this topology, a sequence $\{u_n\}_{n \in \N}\subset  C(\R; H^s(\M))$ converges
if and only if it converges uniformly on any compact time interval.
Then, it follows from  the local Lipschitz continuity of $\Phi_N(\cdot)$
that $\Phi_N$ is continuous
from $H^s (\M)$ into $C(\R; H^s(\M))  $.
We now extend $\Pkn$ on $H^s$
to a probability measure $\nu_N$
on $C(\R; H^s(\M))  $ by setting
\[ \nu_N = \Pkn \circ \Phi_N^{-1}.\]

\noi
Namely, $\nu_N$ is the induced probability measure of $\Pkn$
under the map $\Phi_N$.
In particular, we have
\begin{align}
 \int_{C(\R; H^s)}  F(u) d\nu_N (u) = \int_{H^s} F(\Phi_N(\phi)) d \Pkn(\phi)
\label{Y1}
 \end{align}

\noi
for any measurable function $F :C(\R; H^s(\M))\to  \R$.

\subsection{Tightness of the measures $\nu_N$}
\label{SUBSEC:5.2}

In the following, we prove that the sequence
$\{\nu_N\}_{N\in \N}$ of probability measures on $C(\R; H^s(\M))$ is precompact.
Recall the following definition of tightness of a sequence of probability measures.

\begin{definition}\label{DEF:tight}\rm
A sequence $\{ \rho_n\}_{n \in \N}$ of probability measures
on a metric space $\mathcal{S}$ is {\it tight}
if, for every $\eps > 0$, there exists a compact set $K_\eps$
such that $\rho_n(K_\eps^c) \leq \eps$ for all $n \in \N$.
\end{definition}

\noi	
It is well known that 
tightness of a sequence of probability measures
is equivalent to precompactness of the sequence.
See~\cite{Bass}.

\begin{lemma}[Prokhorov's theorem]\label{LEM:Pro}
If a sequence of probability measures 
on a metric space $\mathcal{S}$ is tight, then
there is a subsequence that converges weakly to 
a probability measure on $\mathcal{S}$.
\end{lemma}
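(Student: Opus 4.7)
The plan is to reproduce the classical proof of Prokhorov's theorem in the present setting. The ambient space $\mathcal{S} = C(\mathbb{R}; H^s(\mathcal{M}))$, equipped with the translation-invariant metric $d$ introduced above, is a Polish space: it is separable as a countable product of separable spaces $C([-j,j]; H^s(\mathcal{M}))$, and it is complete since uniform limits on compact intervals preserve continuity. Hence the lemma reduces to the standard Prokhorov theorem on Polish metric spaces, which can be cited from, e.g., Bass.

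First, I would use tightness to concentrate the mass on a $\sigma$-compact set: for each $k \in \mathbb{N}$, choose a compact $K_k \subset \mathcal{S}$ with $\rho_n(K_k^c) \leq 1/k$ uniformly in $n$, arranging $K_k \subset K_{k+1}$ by replacing $K_k$ with $\bigcup_{j \leq k} K_j$. Each $K_k$ is a compact metric space, so $C(K_k)$ is separable; fix a countable dense subset $\{f_{k,j}\}_{j\geq 1}$ of its unit ball. The sequence $\int f_{k,j}\, d\rho_n$ is uniformly bounded, so a Cantor diagonal extraction produces a subsequence $\{\rho_{n_\ell}\}$ along which $\int f_{k,j}\, d\rho_{n_\ell}$ converges for every pair $(k,j)$.

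Next, I would construct the candidate limit measure. For $f \in C_b(\mathcal{S})$, the density of $\{f_{k,j}\}$ in $C(K_k)$ combined with the uniform tightness bound $\rho_n(K_k^c) \leq 1/k$ allows one to define $\Lambda(f) := \lim_\ell \int f\, d\rho_{n_\ell}$ as a positive linear functional on $C_b(\mathcal{S})$ of norm at most $1$: for $\varepsilon > 0$, one writes $\int f\, d\rho_n = \int_{K_k} f\, d\rho_n + O(\|f\|_\infty/k)$ and approximates $f|_{K_k}$ uniformly by some $f_{k,j}$. The Riesz representation theorem, applied on each compact $K_k$ and then patched together via the increasing union, produces a finite Borel measure $\rho$ on $\mathcal{S}$ representing $\Lambda$.

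The only subtle point, and the main obstacle, is to verify that $\rho$ is a probability measure, i.e.\ that no mass escapes in the limit. Given $\varepsilon > 0$, choose $k > 1/\varepsilon$ and approximate $\mathbf{1}_{K_k}$ from below by continuous cut-offs $\chi \in C_b(\mathcal{S})$ supported in a small neighborhood of $K_k$; the construction of $\Lambda$ together with the uniform bound $\rho_{n_\ell}(K_k) \geq 1 - 1/k$ yields $\rho(K_k) \geq \Lambda(\chi) \geq \liminf_\ell \rho_{n_\ell}(K_k) \geq 1 - 1/k$. Letting $k \to \infty$ gives $\rho(\mathcal{S}) = 1$, so $\rho$ is a probability measure, and the weak convergence $\rho_{n_\ell} \rightharpoonup \rho$ along the extracted subsequence is then immediate from $\int f\, d\rho_{n_\ell} \to \Lambda(f) = \int f\, d\rho$ for all $f \in C_b(\mathcal{S})$.
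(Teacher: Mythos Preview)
Your sketch is essentially correct as an outline of the classical proof of Prokhorov's theorem on Polish spaces, though a couple of points would need tightening in a full write-up: the ``patching together'' of the Riesz measures on the increasing compacts $K_k$ is a bit informal (one usually shows directly that $\Lambda$ is tight and invokes the Daniell--Stone or Riesz theorem on $\mathcal S$ itself), and the approximation of $\mathbf 1_{K_k}$ from below by continuous functions uses that closed sets in a metric space are $G_\delta$, which is fine but worth stating.

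However, the paper takes a different route: it does not prove this lemma at all. Prokhorov's theorem is stated as a standard result and simply cited from \cite{Bass}; no argument is given. So your proposal supplies something the paper deliberately omits. One remark: the lemma as stated in the paper is for an arbitrary metric space $\mathcal S$, whereas your argument (correctly, for the intended application to $C(\mathbb R; H^s(\mathcal M))$) assumes $\mathcal S$ is Polish. Prokhorov's theorem in the form stated does in fact require some separability/completeness hypothesis, so if you were writing this out in full you would want to note that the result is being applied to a Polish space, exactly as you do in your first paragraph.
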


The following proposition shows that 
 the family 
 $\{ \nu_N\}_{N \in \N}$ is tight
 and hence,   up to a subsequence,  it  converges weakly to some probability measure $\nu$ on $C(\R; H^s)$.

\begin{proposition}\label{PROP:tight}
Let $s< 0$. Then, the family $\{ \nu_N\}_{N \in \N}$
of the probability measures on 
$C(\R; H^s(\M))$
is tight.
\end{proposition}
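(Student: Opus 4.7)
The plan is to verify tightness via the Arzel\`a--Ascoli theorem: for each $\eps > 0$, I will exhibit a compact set $K_\eps \subset C(\R; H^s(\M))$ with $\nu_N(K_\eps^c) \leq \eps$ uniformly in $N$. Fixing $s < s_0 < s_1 < 0$ and $\alpha > 0$, the building blocks will be the sets
\[ K_{R,T} = \big\{ u \in C([-T,T]; H^s(\M)) : \|u\|_{C([-T,T]; H^{s_0})} + \|u\|_{C^\alpha([-T,T]; H^{s_0-2})} \leq R \big\}, \]
which are compact in $C([-T,T]; H^s(\M))$: the $H^{s_0}$-bound together with the compact embedding $H^{s_0} \hookrightarrow H^s$ gives pointwise precompactness, while interpolation between $H^{s_0}$ and $H^{s_0-2}$ upgrades the H\"older continuity in $H^{s_0-2}$ into H\"older continuity in $H^s$ (with smaller exponent), which yields equicontinuity. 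The final compact set will be a countable intersection $K_\eps = \bigcap_{T \in \N} \{u : u|_{[-T,T]} \in K_{R_T, T}\}$ for a sequence $\{R_T\}$ growing fast enough that $\nu_N(K_\eps^c) \leq \eps$.

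The first input is a pointwise-in-$t$ moment estimate obtained from the invariance of $\Pkn$ (Lemma~\ref{LEM:global}) together with the uniform $L^q(\mu)$-bound on the density $R_N$ from Proposition~\ref{4PROP:Gibbs1}: by invariance and H\"older,
\[ \E_{\nu_N}\big[\|u(t)\|_{H^{s_1}}^p\big] = \E_{\Pkn}\big[\|\phi\|_{H^{s_1}}^p\big] \leq C_p, \]
uniformly in $t$ and $N$, using that $\phi \sim \mu$ has Gaussian moments in $H^{s_1}$ via the random series \eqref{4G5}. The second input is a time-derivative bound: differentiating \eqref{5NLS3} gives $\dt u^N = i \Delta u^N - i F_N(u^N)$, where the linear part is controlled in $H^{s_1-2}$ by the previous step and the nonlinear part is controlled in $H^{s_1}$ by Proposition~\ref{4PROP:nonlin} combined with the density bound, yielding $\sup_{t, N}\E_{\nu_N}[\|\dt u(t)\|_{H^{s_1-2}}^p] \leq C_p$. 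Writing $u^N(t) - u^N(t') = \int_{t'}^t \dt u^N(\tau)\, d\tau$ and applying Jensen's inequality in $\tau$ then gives
\[ \E_{\nu_N}\big[\|u(t)-u(t')\|_{H^{s_1-2}}^p\big] \leq C_p\,|t-t'|^p. \]

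Interpolating this last estimate with the pointwise $H^{s_1}$ bound (via H\"older in $\omega$) yields $\E_{\nu_N}[\|u(t)-u(t')\|_{H^{s_0}}^p] \leq C_p\,|t-t'|^{p(1-\theta)}$ for some $\theta \in (0,1)$ determined by $s_0, s_1$; choosing $p$ so that $p(1-\theta) > 1$, Kolmogorov's continuity theorem promotes this into a uniform-in-$N$ moment bound on $\|u\|_{C^\alpha([-T,T]; H^{s_0})}$ for some $\alpha > 0$, and together with the pointwise bound at $t=0$ into an $L^p(\nu_N)$-bound on $\|u\|_{C([-T,T]; H^{s_0})}$ as well. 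Markov's inequality then chooses $R_T$ with $\nu_N(K_{R_T, T}^c) \leq \eps/2^T$, which closes the argument. The main obstacle is precisely this passage from pointwise-in-$t$ invariance (which controls $\|u^N(t)\|_{H^{s_1}}$ only at each fixed $t$) to a pathwise uniform-in-$t$ bound on $\|u^N\|_{C([-T,T]; H^{s_0})}$, since the truncated NLS \eqref{5NLS3} preserves no deterministic $H^\sigma$-norm pathwise; the Kolmogorov criterion, fed with the time-regularity extracted from the equation, is exactly what resolves it.
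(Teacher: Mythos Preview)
Your proof is correct and follows the same overall architecture as the paper: tightness via Arzel\`a--Ascoli on each $[-T,T]$, with uniform-in-$N$ moment bounds on a H\"older norm $\|u\|_{C^\alpha_T H^{s_0}}$ obtained by combining the invariance of $\Pkn$ (for the spatial regularity) with a time-derivative estimate coming from the equation~\eqref{5NLS3} (for the temporal regularity), followed by Markov's inequality and a diagonal argument over $T_j = 2^j$.

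The one substantive difference is in how you pass from pointwise-in-$t$ moment bounds to a pathwise H\"older bound. You use Kolmogorov's continuity criterion, fed by the increment estimate $\E_{\nu_N}[\|u(t)-u(t')\|_{H^{s_0}}^p] \les |t-t'|^{p(1-\theta)}$. The paper instead first uses Fubini and invariance to obtain $L^p(\nu_N)$-bounds on $\|u\|_{L^p_T H^{s_2}}$ and $\|u\|_{W^{1,p}_T H^{s_2-2}}$ (this is Lemma~\ref{LEM:bound1}), and then invokes a \emph{deterministic} interpolation lemma (Lemma~\ref{LEM:BTT1}, taken from~\cite{BTT1}) asserting that $\|u\|_{C^\alpha_T H^{s_1}} \les \|u\|_{L^p_T H^{s_2}} + \|u\|_{W^{1,p}_T H^{s_2-2}}$ pathwise; taking $L^p(\nu_N)$-norms of both sides then gives the required H\"older moment bound directly. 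The paper's route is slightly shorter in that a single Sobolev-in-time embedding replaces your interpolation-in-$x$ plus H\"older-in-$\omega$ plus Kolmogorov chain; your route has the virtue of being self-contained, not relying on the auxiliary Lemma~\ref{LEM:BTT1}, and perhaps more transparent about where the loss of spatial regularity $s_1 \mapsto s_0$ is spent.
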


\noi
The proof of Proposition~\ref{PROP:tight}
is similar to that of~\cite[Proposition 4.11]{BTT1}.
While~\cite[Proposition 4.11]{BTT1}
proves the tightness of $\{ \nu_N\}_{N \in \N}$
restricted to $[-T, T]$ for each $T>0$, 
we directly prove the tightness of $\{ \nu_N\}_{N \in \N}$ on the whole time interval.

In the following, we first state  several lemmas.
We present the proof of Proposition~\ref{PROP:tight}
at the end of this subsection.
For simplicity of presentation, we use the following notations.
Given $T>0$, we write $L^p_T H^s$ for $L^p([-T, T]; H^s)$.
We use a similar abbreviation for other function spaces in time.
Let $\rho$ be a probability measure on $H^s$.
With a slight abuse of notation, we use $L^p(\rho)H^s$
to denote
\[ \| \phi\|_{L^p(\rho)H^s}
= \big\| \| \phi \|_{H^s}  \big\|_{L^p(\rho)}.\]

The first lemma provides a control on the size
of random space-time functions.
The invariance of $\Pkn$ under the dynamics of~\eqref{5NLS3}
plays an important role.

\begin{lemma}\label{LEM:bound1}
Let $ s< 0$ and $p \geq 1$.
Then, there exists $C_p > 0$ such that 
\begin{align}
\big\| \| u\|_{L^p_T H^s} \big\|_{L^p(\nu_N)} & \leq C_p T^\frac{1}{p}, \label{Y2}\\
\big\| \| u\|_{W^{1, p}_T H^{s-2}} \big\|_{L^p(\nu_N)} & \leq C_pT^\frac{1}{p}, 
\label{Y3}
\end{align}

\noi
uniformly in $N \in \N$.
	
\end{lemma}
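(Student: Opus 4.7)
The plan is to reduce both space-time norms to static estimates on $H^s$ (respectively $H^{s-2}$) via Fubini's theorem and the invariance of the truncated Gibbs measure established in Lemma \ref{LEM:global}. Concretely, starting from the definition \eqref{Y1} of $\nu_N$ as the pushforward of $\Pkn$ by $\Phi_N$, I would write
\[
\big\|\|u\|_{L^p_T H^s}\big\|_{L^p(\nu_N)}^{\,p}
=\int_{-T}^{T}\!\int_{H^s}\|\Phi_N(t)\phi\|_{H^s}^p\,d\Pkn(\phi)\,dt,
\]
and then invoke invariance of $\Pkn$ under $\Phi_N(t)$ to replace $\Phi_N(t)\phi$ by $\phi$ in the inner integral. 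This reduces \eqref{Y2} to the $N$-uniform estimate $\int\|\phi\|_{H^s}^p\,d\Pkn(\phi)\leq C_p$, which follows from $d\Pkn=Z_N^{-1}R_N(u)\,d\mu$, Cauchy--Schwarz, the uniform $L^2(\mu)$-bound on $R_N$ supplied by Proposition \ref{PROP:Gibbs1} (or Proposition \ref{4PROP:Gibbs1}), a uniform lower bound on $Z_N$ (which converges to $Z>0$ as $R_N\to R$ in $L^1(\mu)$), and the Gaussian integrability $\phi\mapsto\|\phi\|_{H^s}\in L^{2p}(\mu)$ for $s<0$.

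For the $W^{1,p}_T H^{s-2}$ estimate \eqref{Y3}, I would control the time derivative directly through the equation. Since $u$ is distributed as a solution of \eqref{5NLS3} under $\nu_N$, we have
\[
\partial_t u = i\Delta u - i F_N(u),
\]
and $\|\Delta u\|_{H^{s-2}}\leq\|u\|_{H^s}$. The contribution of the linear piece is thus already controlled by the argument for \eqref{Y2}, and for the nonlinear piece I would again apply Fubini and invariance to reduce to bounding
\[
\int_{H^s}\|F_N(\phi)\|_{H^{s-2}}^p\,d\Pkn(\phi)
\]
uniformly in $N$. By the same Cauchy--Schwarz/density argument, this in turn reduces to a uniform bound on $\|F_N(\phi)\|_{L^{2p}(\mu;\,H^{s-2})}$, which is supplied by Proposition \ref{PROP:nonlin} (or Proposition \ref{4PROP:nonlin}); since $s-2<0$, this is safely inside the regime where those propositions apply.

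The main technical point, and essentially the only place where the dynamics enters, is the swap of $\Phi_N(t)\phi$ for $\phi$ inside expectations made possible by invariance of $\Pkn$. Apart from that, the proof is a pure measure-theoretic consequence of (i) the uniform $L^q(\mu)$ bound on the density $R_N$, (ii) the uniform $L^q(\mu;H^{s'})$ bound on the approximating nonlinearity $F_N$ for $s'<0$, and (iii) Gaussian integrability of $\mu$ on $H^s$. No dispersive estimate is needed, and the scaling factor $T^{1/p}$ emerges solely from the length of the time interval in the Fubini step.
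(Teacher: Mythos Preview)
Your proposal is correct and follows essentially the same route as the paper: Fubini plus invariance of $\Pkn$ under $\Phi_N(t)$ reduces \eqref{Y2} to a static $L^p(\Pkn)$-bound on $\|\phi\|_{H^s}$, which is handled by H\"older/Cauchy--Schwarz against $R_N$, the uniform $L^q(\mu)$-bound on $R_N$ from Proposition~\ref{4PROP:Gibbs1}, the lower bound on $Z_N$, and Gaussian integrability of $\|\phi\|_{H^s}$; for \eqref{Y3} one uses the equation to write $\partial_t u = i\Delta u - iF_N(u)$, bounds the linear part by \eqref{Y2}, and treats the nonlinear part by the same Fubini--invariance--H\"older reduction together with Proposition~\ref{4PROP:nonlin}. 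The only cosmetic differences are that the paper invokes the Wiener chaos estimate (Lemma~\ref{LEM:hyp3}) explicitly for the Gaussian moment bound and applies H\"older with exponent $2p$ on $R_N$ rather than $2$, neither of which changes the argument.
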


\begin{proof}

By Fubini's theorem, the definition ~\eqref{Y1}, the invariance of $\Pkn$ (Lemma~\ref{LEM:global}), 
and H\"older's inequality, we have 
 \begin{align}
\big\| \| u\|_{L^p_T H^s} \big\|_{L^p(\nu_N)}  
& = 
\big\| \| \Phi_N(t) (\phi)  \|_{L^p_T H^s}
\big\|_{L^p(\Pkn)}  
= 
\big\| \| \Phi_N(t) (\phi)  \|_{L^p(\Pkn) H^s}
\big\|_{L^p_T}  \notag \\
& 
= (2T)^\frac{1}{p} 
\| \phi \|_{L^p(\Pkn) H^s}
\leq (2T)^\frac{1}{p}\|R_N \|_{L^{2p}(\mu)} \| \phi \|_{L^{2p}(\mu) H^s}.
\label{Y4}
\end{align}

\noi
Then,~\eqref{Y2}
follows from~\eqref{Y4} with 
 Proposition~\ref{4PROP:Gibbs1}, 
\eqref{4G5}, and Lemma~\ref{LEM:hyp3}.

From~\eqref{5NLS3} and the definition of $F_{N}$, we have
\begin{align}
\big\| \| \dt u\|_{L^p_T H^{s-2}} \big\|_{L^p(\nu_N)} 
\leq
\big\| \| u\|_{L^p_T H^{s}} \big\|_{L^p(\nu_N)} 
+ \big\| \| F_N(u)\|_{L^p_T H^{s-2}} \big\|_{L^p(\nu_N)} .
\label{Y5}
\end{align}

\noi
The first term is estimated by~\eqref{Y2}.
Proceeding as in~\eqref{Y4} with Propositions~\ref{4PROP:Gibbs1} and~\ref{4PROP:nonlin}, we have 
 \begin{align*}
\big\| \| F_N(u)\|_{L^p_T H^{s-2}} \big\|_{L^p(\nu_N)} 
& \leq (2T)^\frac{1}{p}\|R_N \|_{L^{2p}(\mu)} \| F_N(\phi) \|_{L^{2p}(\mu) H^{s-1}}
 \leq C_p T^\frac{1}{p}.
\end{align*}

\noi
This proves~\eqref{Y3}.
\end{proof}

Recall the following lemma 
on deterministic functions from~\cite{BTT1}.	

\begin{lemma}[{\cite[Lemma 3.3]{BTT1}}]\label{LEM:BTT1}
Let $T > 0$ and $1\leq p \leq \infty$.
Suppose that 
$u \in L^p_T H^{s_1}$ and $\dt u \in L^p_T H^{s_2}$
for some $s_2 \leq s_1$.
Then, for $ \dl > p^{-1}(s_1 - s_2)$, we have 
\[ \| u \|_{L^\infty_TH^{s_1 - \dl}} \les \| u \|_{L^p_T H^{s_1}}^{1-\frac 1p}
\| u \|_{W^{1, p}_T H^{s_2}}^{ \frac 1p}.\]

\noi
Moreover, there exist $\al > 0$ and $\theta \in [0, 1] $
such that for all $t_1, t_2 \in [-T, T]$, we have
\[ \| u(t_2) - u(t_1)  \|_{H^{s_1 - 2\dl}} \les |t_2 - t_1|^\al  \| u \|_{L^p_T H^{s_1}}^{1-\theta}
\| u \|_{W^{1, p}_T H^{s_2}}^{ \theta}.\]

\end{lemma}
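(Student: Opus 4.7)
The plan is to localize in frequency using the spectral projectors $\P_N$ introduced in Section \ref{SEC:mfd}, apply a Sobolev-type embedding in time to each frequency piece, and then sum dyadically with an optimized threshold. Throughout, $N$ runs over dyadic integers and $\Delta_N u := (\P_{2N}-\P_N)u$.

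First I would establish the frequency-by-frequency estimate: for fixed $N$ and any $s,t\in[-T,T]$, the fundamental theorem of calculus and H\"older in $\tau$ give
\[
\|\Delta_N u(t)-\Delta_N u(s)\|_{L^2}\le |t-s|^{1-1/p}\|\Delta_N \dt u\|_{L^p_T L^2}.
\]
Averaging in $s\in[-T,T]$ and taking the sup in $t$ then yields
\[
\|\Delta_N u\|_{L^\infty_T L^2}\les T^{-1/p}\|\Delta_N u\|_{L^p_T L^2}+T^{1-1/p}\|\Delta_N \dt u\|_{L^p_T L^2}.
\]
Using the Plancherel-type identity $\|f\|_{H^\sigma}^2\simeq \sum_N N^{2\sigma}\|\Delta_N f\|_{L^2}^2$, I would multiply the square of the above by $N^{2(s_1-\delta)}$ and split the dyadic sum at a threshold $N_0$: on $N>N_0$ I retain only the first term, rewritten as $N^{-2\delta}(N^{2s_1}\|\Delta_N u\|_{L^p_T L^2}^2)$, and on $N\le N_0$ I retain only the second, rewritten as $N^{2(s_1-s_2-\delta)}(N^{2s_2}\|\Delta_N \dt u\|_{L^p_T L^2}^2)$. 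Optimizing $N_0$ as a suitable power of the ratio $\|u\|_{L^p_T H^{s_1}}/\|\dt u\|_{L^p_T H^{s_2}}$ then produces the claimed interpolation bound with exponents $1-1/p$ and $1/p$, the strict inequality $\delta>(s_1-s_2)/p$ being exactly what is needed for the two geometric series to converge.

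For the H\"older-in-time estimate, I would write $u(t_2)-u(t_1)=\int_{t_1}^{t_2}\dt u(\tau)d\tau$ and apply H\"older at each frequency to obtain
\[
\|\Delta_N(u(t_2)-u(t_1))\|_{L^2}\les |t_2-t_1|^{1-1/p}N^{-s_2}\|u\|_{W^{1,p}_T H^{s_2}},
\]
while the first part of the lemma already supplies the trivial bound
\[
\|\Delta_N(u(t_2)-u(t_1))\|_{L^2}\les N^{-(s_1-\delta)}\|u\|_{L^\infty_T H^{s_1-\delta}}.
\]
Weighting by $N^{2(s_1-2\delta)}$, summing dyadically, and again splitting at a threshold---this time a power of $|t_2-t_1|^{-1}$---yields the desired $|t_2-t_1|^\alpha$ factor with some $\alpha>0$, multiplied by an interpolation of the two spatial norms with weights $\theta,1-\theta\in[0,1]$ depending only on $p,s_1,s_2,\delta$ (and not on $t_1,t_2$). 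The extra $\delta$ lost in spatial regularity (compared with the first part, where one loses only $\delta$) is precisely what is converted into the positive H\"older exponent $\alpha$ via the $N_0$-optimization.

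The only genuine technical obstacle is the $L^p$-in-time versus $\ell^2$-in-frequency mismatch in the first step; this is why $\delta$ must be taken strictly greater than $(s_1-s_2)/p$, since the boundary case would produce a logarithmic divergence incompatible with an $L^\infty_T$ bound. Apart from this, the argument is a routine dyadic interpolation, and the H\"older-in-time estimate is a straightforward consequence of the same machinery applied to the difference $u(t_2)-u(t_1)$.
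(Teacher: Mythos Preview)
The paper does not give a proof of this lemma at all; it is quoted verbatim from \cite[Lemma 3.3]{BTT1} and used as a black box. So there is nothing to compare against here, and your sketch stands on its own.

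That said, your argument has a genuine gap at the threshold step. After the frequency-localized Sobolev-in-time inequality
\[
\|\Delta_N u\|_{L^\infty_T L^2}\lesssim T^{-1/p}\|\Delta_N u\|_{L^p_T L^2}+T^{1-1/p}\|\Delta_N\partial_t u\|_{L^p_T L^2},
\]
both terms on the right are present for every $N$; you cannot ``retain only the first term'' on $N>N_0$ and ``only the second'' on $N\le N_0$, since dropping a nonnegative term weakens the upper bound rather than preserving it. In fact, after weighting by $N^{2(s_1-\delta)}$, the second (derivative) contribution carries the factor $N^{2(s_1-s_2-\delta)}$, which \emph{grows} when $\delta<s_1-s_2$, so the tail $N>N_0$ of that series genuinely diverges and cannot simply be discarded. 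With a single fixed $T$, the argument as written only yields the crude threshold $\delta>s_1-s_2$, not the sharp $\delta>(s_1-s_2)/p$.

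The fix is minor but essential: in the averaging step, replace the full interval $[-T,T]$ by a subinterval of $N$-dependent length $\tau_N=\min(T,cN^{-(s_1-s_2)})$ around the point where the sup is attained. Then both weighted terms acquire a common decaying factor $N^{-(\delta-(s_1-s_2)/p)}$, and optimizing the constant $c$ over the ratio $\|u\|_{L^p_T H^{s_1}}/\|\partial_t u\|_{L^p_T H^{s_2}}$ produces the exponents $1-\tfrac1p$ and $\tfrac1p$ exactly as claimed. Your treatment of the H\"older-in-time part is fine once the first part is in hand.
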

	
We are now ready to present the proof of Proposition~\ref{PROP:tight}.

\begin{proof}[Proof of Proposition~\ref{PROP:tight}]
Let $s < s_1 < s_2 < 0$.
For $\al \in (0, 1)$, consider the Lipschitz space $C^\al_TH^{s_1} = C^\al([-T, T]; H^{s_1}(\M))$ defined by the norm
\[ \| u \|_{C^\al_T H^{s_1}} = \sup_{\substack{t_1, t_2 \in [-T, T]\\t_1 \ne t_2}}
\frac{\| u(t_1) - u(t_2) \|_{H^{s_1}}}{|t_1 - t_2|^\al} + \|u \|_{L^\infty_T H^{s_1}}.
\]

\noi
It follows from the Arzel\`a-Ascoli theorem
that the embedding 
$C^\al_T H^{s_1} \subset 
C_T H^{s}$ is compact
for each $T>0$.

By Lemma~\ref{LEM:BTT1} with large $p\gg1$ and Young's inequality, we have
\begin{align}
\| u \|_{C^\al_T H^{s_1}}
\les \|u \|_{L^p_TH^{s_2}}^{1-\theta} \|u \|_{W^{1, p}_TH^{s_2 - 2}}^{\theta}
\les \|u \|_{L^p_TH^{s_2}}+ \|u \|_{W^{1, p}_TH^{s_2 - 2}}
\label{Y6}
\end{align}
	
\noi
for some $\al \in (0, 1)$.
Then, it follows from~\eqref{Y6} and 
Lemma~\ref{LEM:bound1} that 
\begin{align}
\big\|\| u \|_{C^\al_T H^{s_1}}\big\|_{L^p(\nu_N)}
\leq C_p T^\frac{1}{p}.
\label{Y7}
\end{align}

For $j \in \N$, let $T_j = 2^j$.
Given $\eps > 0$, define $K_\eps$ by 
\[ K_\eps = \big\{ u \in C(\R; H^s):\, \| u \|_{C^\al_{T_j} H^{s_1}} \leq c_0 \eps^{-1} T_j^{1+ \frac{1}{p}} 
\text{ for all }
j \in \N \big\}.\]

\noi
Then, by Markov's inequality with~\eqref{Y7}
and choosing $c_0 > 0$ sufficiently large, we have 
\[ \nu_N(K_\eps^c) 
\leq c^{-1}_0 C_1 \eps T_j^{-1-\frac{1}p}
\big\|\| u \|_{C^\al_{T_j} H^{s_1}}\big\|_{L^p(\nu_N)}
\leq c^{-1}_0 C_p \eps \sum_{j = 1}^\infty T_j^{-1}
= c^{-1}_0 C_p \eps < \eps.\]

Hence, it remains to prove that $K_\eps$ is compact in $C(\R; H^s)$
endowed with the compact-open topology.
Let $\{u_n \}_{n \in \N} \subset K_\eps$.
By the definition of $K_\eps$, 
$\{u_n \}_{n \in \N}$ is bounded in 
$C^\al_{T_j} H^{s_1}$ for each $j \in \N$.
Then, by a diagonal argument, 
we can extract a subsequence $\{u_{n_\l} \}_{\l \in \N}$
convergent in $C^\al_{T_j} H^s$ for each $j \in \N$.
In particular,  $\{u_{n_\l} \}_{\l \in \N}$ converges uniformly in $H^s$
on any compact time interval.
Hence,  $\{u_{n_\l} \}_{\l \in \N}$ converges in $C(\R; H^s)$
endowed with the compact-open topology.
This proves that $K_\eps$ is compact in $C(\R; H^s)$.
\end{proof}

\subsection{Proof of Theorem~\ref{THM:2}}
\label{SUBSEC:5.3}

It follows from  Proposition~\ref{PROP:tight} and Lemma~\ref{LEM:Pro}
that, passing to a subsequence,  
$\nu_{N_j}$ converges weakly to some probability measure $\nu $
on $C(\R; H^s(\M))$ for any $s< 0$.
The following Skorokhod's theorem tells us that, 
by introducing a new probability space $(\wt \O, \F, \wt P)$
and a sequence of new random variables $\wt {u^N}$ with the same distribution $\nu_N$, 
we can  upgrade
this weak convergence to almost sure convergence of $\wt {u^N}$.
See~\cite{Bass}.

\begin{lemma}[Skorokhod's theorem]\label{LEM:Sk}
Let $\mathcal{S}$ be a complete separable metric space. 
Suppose that $\rho_n$ are probability measures on $\mathcal{S}$
converging weakly to a probability measure $\rho$.
Then, there exist random variables $X_n:\wt \O \to \mathcal{S}$
with laws $\rho_n$
and a random variable  $X:\wt \O \to \mathcal{S}$ with law $\rho$
such that $X_n \to X$ almost surely.

\end{lemma}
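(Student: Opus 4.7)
The plan is to construct the random variables on the canonical probability space $(\wt\O, \wt{\mathcal F}, \wt P) = ([0,1], \mathcal{B}, \lambda)$ with Lebesgue measure, using a Polish-space quantile-style construction that mimics the real-line proof. The key is to build a nested sequence of countable measurable partitions $\mathcal{P}_k = \{A_j^k\}_{j \in \N}$ of $\mathcal{S}$ with $\text{diam}(A_j^k) \leq 2^{-k}$, with $\mathcal{P}_{k+1}$ refining $\mathcal{P}_k$, and crucially with $\rho(\partial A_j^k) = 0$ for every $j,k$. Such partitions exist by separability of $\mathcal{S}$: picking a countable dense set and noting that for any $x$, all but countably many radii $r > 0$ yield $\rho(\partial B(x,r)) = 0$, we can cover $\mathcal{S}$ by a countable family of balls with $\rho$-null spherical boundaries and then disjointify to form $\mathcal{P}_k$.

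Next, I would encode each partition as a partition of $[0,1]$: fix an ordering of the cells and let $I_j^k \subset [0,1]$ be the consecutive interval of length $\rho(A_j^k)$, so that $\omega \in I_j^k$ corresponds under $\rho$ to the cell $A_j^k$. Refinement gives a nested sequence of cells $A_{j_k(\omega)}^k$ for each $\omega$, and since diameters vanish and $\mathcal{S}$ is complete, their closures contain a unique point, which I declare to be $X(\omega)$. For $X_n$, I repeat the construction with $\rho_n$ in place of $\rho$, but keeping the same cell-ordering as before, so that the intervals $I_j^{k,n}$ of length $\rho_n(A_j^k)$ are also nested. The pushforward laws are then $\rho$ and $\rho_n$ respectively, by construction of the interval-to-cell identification.

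For almost-sure convergence, I would invoke the portmanteau theorem: since each $A_j^k$ is a $\rho$-continuity set, $\rho_n(A_j^k) \to \rho(A_j^k)$, hence the endpoints of $I_j^{k,n}$ converge to those of $I_j^k$ as $n \to \infty$. Thus, for any $\omega$ that avoids the countable set of endpoints of $\{I_j^k\}_{j,k}$ (a $\lambda$-null set), for every $k$ we have $j_k^n(\omega) = j_k(\omega)$ for all $n$ sufficiently large. This forces $X_n(\omega)$ and $X(\omega)$ to lie in the common cell $A_{j_k(\omega)}^k$ of diameter $\leq 2^{-k}$, so $d(X_n(\omega), X(\omega)) \leq 2^{-k}$ eventually, giving $X_n(\omega) \to X(\omega)$.

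The main obstacle, and the place where Polish (as opposed to just separable metric) structure is used, is the construction of the partitions $\mathcal{P}_k$ with simultaneously small diameter, $\rho$-null boundaries, and nested refinement; the diameter and nesting are routine by a greedy construction on a countable dense set, but the null-boundary property requires the continuity-radius observation above combined with a careful Carath\'eodory-style intersection argument to disjointify while preserving null boundaries (boundaries of finite intersections and complements of null-boundary sets are contained in unions of null-boundary sets). Completeness of $\mathcal{S}$ is then used precisely at the step where nested cells of shrinking diameter collapse to a single point, making $X$ well-defined. Once these ingredients are in place, the weak convergence of $\rho_n$ translates into the coupling argument above with essentially no additional analytic work.
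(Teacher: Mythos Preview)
The paper does not prove this lemma at all; it is stated as a classical result and the reader is referred to Bass~\cite{Bass} for a proof. Your sketch is the standard proof of Skorokhod's representation theorem and is essentially correct, so there is no conflict---you have simply supplied what the paper chose to cite.
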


By Lemma~\ref{LEM:Sk}, there exist another probability space $(\wt \O, \wt \F, \wt P)$,
a sequence $\big\{ \wt {u^{N_j}}\big\}_{j \in \N}$ of $C(\R; H^s)$-valued random variables, 
and 
a $C(\R; H^s)$-valued random variable $u $
such that 
\begin{align}
\L\big(\wt {u^{N_j}}\big) = \L( u^{N_j}) = \nu_N, 
\qquad \L(u) = \nu, \label{Y8}
\end{align}
	
\noi
and $\wt {u^{N_j}}$ converges to $u$ in $C(\R; H^s)$ almost surely
with respect to $\wt P$.

Next, we determine the distributions of these random variables at a given time $t$.
By Lemma~\ref{LEM:global}, 
we have 
\begin{align}
 \L( u^{N_j}(t)) = P^{(2m)}_{2, N_j}
 \label{Y9}
\end{align} 

\noi
for each $t \in \R$.

\begin{lemma}\label{LEM:end1}
Let $\wt u_{N_j}$ and $u$ be as above.
Then, we have
\begin{align*}
\L\big(\wt {u^{N_j}}(t)\big)  = P^{(2m)}_{2, N_j}
\quad \text{and} \quad
\L(u(t)) = \Pk 
\end{align*}

\noi
for any $t \in \R$.
\end{lemma}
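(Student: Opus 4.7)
The plan is to reduce both identities to statements already established: invariance of the truncated Gibbs measures (Lemma~\ref{LEM:global}), and $L^1(\mu)$-convergence of the densities $R_N$ (Proposition~\ref{4PROP:Gibbs1}). The key observation is that for each fixed $t \in \R$, the evaluation map $e_t : C(\R; H^s(\M)) \to H^s(\M)$, $e_t(u) = u(t)$, is continuous when $C(\R; H^s)$ is endowed with the compact-open topology. Thus pushforwards under $e_t$ convert statements about laws on path space into statements about laws at time $t$.

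For the first identity, I would argue as follows. By construction, $\L(\wt{u^{N_j}}) = \nu_{N_j} = \L(u^{N_j})$ on $C(\R; H^s(\M))$; see~\eqref{Y8}. Applying the continuous map $e_t$ and using the change-of-variables formula,
\begin{align*}
\L\big(\wt{u^{N_j}}(t)\big) = \L\big(\wt{u^{N_j}}\big) \circ e_t^{-1}
= \L(u^{N_j}) \circ e_t^{-1} = \L(u^{N_j}(t)).
\end{align*}
By the invariance of $\Pkn[N_j]$ stated in Lemma~\ref{LEM:global} (and recorded in~\eqref{Y9}), the right-hand side equals $P^{(2m)}_{2, N_j}$.

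For the second identity, I would combine the almost sure convergence $\wt{u^{N_j}} \to u$ in $C(\R; H^s(\M))$ with continuity of $e_t$ to conclude that $\wt{u^{N_j}}(t) \to u(t)$ in $H^s(\M)$, $\wt P$-almost surely, hence in distribution. Therefore the measures $\L(\wt{u^{N_j}}(t)) = P^{(2m)}_{2, N_j}$ converge weakly to $\L(u(t))$ as $H^s$-valued probability measures. On the other hand, by Proposition~\ref{4PROP:Gibbs1}, $R_{N_j} \to R$ in $L^1(\mu)$, and the normalizing constants $Z_{N_j} = \int R_{N_j}\,d\mu$ converge to $Z = \int R\,d\mu > 0$; consequently,
\begin{align*}
\Pkn[N_j] = Z_{N_j}^{-1} R_{N_j}\,d\mu \longrightarrow Z^{-1} R\, d\mu = \Pk
\end{align*}
in total variation on $H^s(\M)$, and in particular weakly. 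By uniqueness of the weak limit, $\L(u(t)) = \Pk$.

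The argument is essentially bookkeeping once one recognizes that the evaluation map is continuous in the compact-open topology and that Proposition~\ref{4PROP:Gibbs1} delivers strong (not merely weak) convergence of $\Pkn$ to $\Pk$. There is no real obstacle; the only small point requiring care is that the weak convergence $\nu_{N_j} \rightharpoonup \nu$ was obtained after passing to a subsequence in Proposition~\ref{PROP:tight}, but since the conclusion holds for the full subsequence indexed by $\{N_j\}$, no further extraction is needed.
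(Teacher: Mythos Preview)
Your proof is correct and follows essentially the same approach as the paper: both use continuity of the time-$t$ evaluation map on $C(\R;H^s)$, the identity $\L(\wt{u^{N_j}}) = \nu_{N_j}$ from Skorokhod, and the convergence $P^{(2m)}_{2,N_j} \to \Pk$ coming from Proposition~\ref{4PROP:Gibbs1}. If anything, your second step is cleaner: you pass from almost sure convergence of $\wt{u^{N_j}}(t)$ to weak convergence of the laws and then invoke uniqueness of weak limits together with the total-variation convergence $P^{(2m)}_{2,N_j} \to \Pk$, whereas the paper writes the limit $\nu^t(A)=\lim_j P^{(2m)}_{2,N_j}(A)$ via dominated convergence on indicator functions, which tacitly requires $A$ to be a continuity set.
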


\begin{proof}
Fix $t \in \R$.
Let $R_t:C(\R; H^s) \to H^s$ be the evaluation map defined by $R_t(v) = v(t)$. 
Note that $R_t$ is continuous.
From~\eqref{Y9}, we have
\begin{align}
P^{(2m)}_{2, N_j} = \nu_{N_j}\circ R_t^{-1}.
\label{Y10}
\end{align}

\noi
Denoting by $\nu_{N_j}^t$  the distribution of  $\wt {u^{N_j}}(t)$, 
it follows from~\eqref{Y8} and~\eqref{Y10} that 
\begin{align}
\nu_{N_j}^t = \nu_{N_j}\circ R_t^{-1} = P^{(2m)}_{2, N_j} .
\label{Y11}
\end{align}

Since $\wt {u^{N_j}}$ converges to $u$ in $C(\R; H^s)$ almost surely
with respect to $\wt P$, 
$\wt {u^{N_j}}(t)$ converges to $u(t)$ in $H^s$ almost surely.
Then, 
denoting by $\nu^t$  the distribution of  $u(t)$, 
it follows from 
 the dominated convergence theorem with~\eqref{Y11} that 
\begin{align}
\nu^t(A) = \int \ind_{\{u(t)(\o) \in A\}} d\wt P
= \lim_{j\to \infty} \int \ind_{\big\{\wt {u^{N_j}} (t)(\o) \in A\big\}} d\wt P
= \lim_{j \to \infty} P^{(2m)}_{2, N_j} (A).
\label{Y12}
\end{align}

\noi
Therefore, from~\eqref{Y12} and 	Proposition~\ref{4PROP:Gibbs1}, 
we conclude that 
$\L(u(t)) = \Pk $.
\end{proof}

 Finally, we show that the random variable $u$ is indeed a global-in-time distributional solution to the Wick ordered NLS
\begin{align}
i \dt u + \Dl  u = \, :\!|u|^{2(m-1)} u\!: \;, 
\qquad  (t,x) \in \R \times \M.
\label{5NLS2}
\end{align}
Then, Theorem~\ref{THM:2} follows from Lemmas~\ref{LEM:end1} and~\ref{LEM:end2}.

\begin{lemma}\label{LEM:end2}
Let $\wt {u^{N_j}}$ and $u$ be as above.
Then, 
$\wt {u^{N_j}}$ and $u$
are global-in-time distributional 
solutions to 
the truncated Wick ordered NLS~\eqref{5NLS3} for each $j \in \N$
and
to the Wick ordered NLS~\eqref{5NLS2}, respectively.
\end{lemma}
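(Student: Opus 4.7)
My plan for Lemma~\ref{LEM:end2} is to handle the two claims separately: the distributional solvability for $\wt{u^{N_j}}$ is essentially automatic from the construction of $\nu_{N_j}$, while the claim for $u$ requires a careful probabilistic passage to the limit in the nonlinearity.

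For the truncated equation \eqref{5NLS3}, I would first observe that the set
\[ \mathcal{A}_{N_j} = \big\{ v \in C(\R; H^s(\M)) : v \text{ solves } \eqref{5NLS3} \text{ distributionally} \big\} \]
is a Borel subset of $C(\R; H^s(\M))$ (it is the intersection of the vanishing sets of countably many continuous functionals, using a dense family of test functions together with the continuity of $F_{N_j}$ on $H^s(\M)$). By Lemma~\ref{LEM:global}, $\nu_{N_j}(\mathcal{A}_{N_j}) = 1$, since $\nu_{N_j} = \Pkn \circ \Phi_{N_j}^{-1}$ and $\Phi_{N_j}$ produces a classical global-in-time solution from every initial datum. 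Since $\L(\wt{u^{N_j}}) = \nu_{N_j}$ by~\eqref{Y8}, this yields $\wt{u^{N_j}} \in \mathcal{A}_{N_j}$ $\wt P$-almost surely.

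For $u$, I would fix $T > 0$ and a test function $\phi \in C_c^\infty((-T, T) \times \M)$. Testing the equation for $\wt{u^{N_j}}$ against $\phi$ yields, $\wt P$-a.s.,
\[ \int_{-T}^T\!\!\int_\M \wt{u^{N_j}}\,\overline{(i\dt+\Dl_g)\phi}\,dx\,dt \;=\; \int_{-T}^T \langle F_{N_j}(\wt{u^{N_j}})(t),\,\phi(t)\rangle\, dt. \]
The left-hand side converges $\wt P$-a.s.\ to the corresponding expression with $u$ from the uniform convergence $\wt{u^{N_j}} \to u$ in $C([-T, T]; H^s(\M))$, so everything reduces to showing convergence in probability of the right-hand side to $\int_{-T}^T \langle F(u)(t), \phi(t)\rangle\, dt$, where $F(u) = \,:\!|u|^{2(m-1)} u\!:$ is the limit from Proposition~\ref{4PROP:nonlin}. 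For this I would introduce an auxiliary cutoff $M$ and decompose
\[ F_{N_j}(\wt{u^{N_j}}) - F(u) \;=\; \big[F_{N_j} - F_M\big](\wt{u^{N_j}}) \;+\; \big[F_M(\wt{u^{N_j}}) - F_M(u)\big] \;+\; \big[F_M(u) - F(u)\big]. \]

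The time-$t$ marginal of $\wt{u^{N_j}}$ is $\Pkn_{N_j}$ (by Lemma~\ref{LEM:end1}), whose density $Z_{N_j}^{-1} R_{N_j}$ with respect to $\mu$ is uniformly bounded in every $L^q(\mu)$ by Proposition~\ref{4PROP:Gibbs1}; combining this with the Cauchy estimate for $\{F_N\}$ in $L^{2p}(\mu; H^s(\M))$ from Proposition~\ref{4PROP:nonlin} and Cauchy--Schwarz bounds the first bracket in $L^p(\wt P; H^s)$ and shows it tends to zero as $\min(N_j, M) \to \infty$; the analogous argument under $\Pk$ (using Lemma~\ref{LEM:end1} again) controls the third. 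For the middle bracket, the key observation is that at fixed $M$, $F_M(v)$ depends only on the finite-dimensional smooth projection $\P_M v$, so $v \mapsto F_M(v)$ is continuous on $H^s(\M)$, and the a.s.\ convergence $\wt{u^{N_j}} \to u$ in $C([-T, T]; H^s)$ transfers to $F_M(\wt{u^{N_j}}) \to F_M(u)$ uniformly on $[-T, T]$. Sending $N_j \to \infty$ first (at $M$ fixed) and then $M \to \infty$ yields convergence in probability of the nonlinear term; after extracting a further subsequence and applying this to a countable dense family of test functions, the distributional equation \eqref{5NLS2} holds $\wt P$-a.s. The main obstacle lies precisely in the outer brackets: because $v \mapsto\; :\!|v|^{2(m-1)}v\!:$ is not continuous on $H^s(\M)$, no direct continuity argument is available, and the passage to the limit must rely on the renormalized Cauchy estimate of Proposition~\ref{4PROP:nonlin} together with the uniform Radon--Nikodym bounds from Proposition~\ref{4PROP:Gibbs1}; this is where the renormalization developed in the earlier sections plays its essential role.
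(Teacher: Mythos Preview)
Your proposal is correct and follows essentially the same route as the paper: a law-transfer argument for the truncated equation, and for the limiting equation the same three-ingredient scheme (continuity of the finite-dimensional $F_M$ for the middle piece, the Cauchy estimate of Proposition~\ref{4PROP:nonlin} together with the uniform $L^q(\mu)$-bound on the Radon--Nikodym densities from Proposition~\ref{4PROP:Gibbs1} for the outer pieces, and Lemma~\ref{LEM:end1} to identify the time marginals). The only cosmetic difference is that the paper splits your first bracket into $\big(F_{N_j}(u_j)-F(u_j)\big)+\big(F(u_j)-F_M(u_j)\big)$, giving a four-term decomposition, but treats the two halves by the same estimate you use; your three-term version is slightly more economical.
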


\begin{proof}

For $j \in \N$, define the 
$\mathcal D'_{t, x}$-valued
random variable $X_j$ by 
\begin{align*}
X_j = i \dt u^{N_j} + \Dl  u^{N_j} - \P_{N_j}\big(\! :\!|\P_{N_j}u^{N_j}|^{2(m-1)} \P_{N_j}u^{N_j}\!: \!\big).
\end{align*}

\noi
Here, 
$\mathcal D'_{t, x}= \mathcal{D}'(\R\times \M)$ denotes the space of space-time distributions
on $\R\times \M$.
We define $\wt X_j$ 
for $\wt {u^{N_j}}$ in an analogous manner.
Since $u^{N_j}$ is a solution to~\eqref{5NLS3}, 
we see that $\L_{\mathcal D'_{t, x}}(X_j) = \dl_0$, 
where $\dl_0$ denotes the Dirac delta measure.
By~\eqref{Y8}, we also have 
\[\L_{\mathcal D'_{t, x}}(\wt X_j) = \dl_0,\]

\noi
for each $j \in \N$.
In particular, 
 $\wt {u^{N_j}}$ is a global-in-time distributional solution
to the truncated Wick ordered NLS~\eqref{5NLS3}
for each $j \in \N$, i.e.
\begin{align*}
i \dt \wt{u^{N_j}} + \Dl  \wt {u^{N_j}} =  \P_{N_j}\big(\! :\!|\P_{N_j} \wt{u^{N_j}}|^{2(m-1)} \P_{N_j}\wt {u^{N_j}}\!: \!\big)
\end{align*}

\noi
in the distributional sense, 
almost surely with respect to $\wt P$.

In view of the almost sure convergence
of  $\wt {u^{N_j}}$  to $u$ in $C(\R; H^s)$, 
we have 
\begin{align*}
i \dt \wt{u^{N_j}} + \Dl  \wt {u^{N_j}} 
\ 
\longrightarrow 
\ i \dt u + \Dl u 
\end{align*}

\noi
in $\mathcal D'(\R\times \M)$ as $j \to \infty$, 
 almost surely
with respect to $\wt P$.
Next, we show the almost sure convergence of $F_{N_j}\big(\wt{u^{N_j}}\big)$
to $F(u) =  \ :\!|u|^{2(m-1)} u\!: $.
For simplicity of notation, let $F_j = F_{N_j}$
and $u_j =  \wt{u^{N_j}}$.
Given $M \in \N$, write
\begin{align}
F_j (u_j) - F(u) 
& = \big( F_j (u_j) - F(u_j) \big)
+  \big( F (u_j) - F_M(u_j) \big)\notag \\
& \hphantom{XXX}
+  \big( F_M (u_j) - F_M(u) \big)
+  \big( F_M (u) - F(u) \big).
\label{Y13}
\end{align}
	
\noi
Then, 
for each fixed $M \geq 1$, 
it follows from 
 the almost sure convergence
of  $\wt {u^{N_j}}$  to $u$ in $C(\R; H^s)$
and the continuity of $F_M$
that the third term on the right-hand side of~\eqref{Y13}
converges to 0 in  $C(\R; H^s)$ as $j \to \infty$, almost surely with respect to $\wt P$.

Fix $T>0$ and let $s < -1$.
Arguing as in~\eqref{Y4}
with Proposition~\ref{4PROP:nonlin}, 
we have
\begin{align}
\big\|\|F (u_j) - F_M(u_j)\|_{L^2_T H^s}\big\|_{L^2(\nu_{N_j})}
& = 
\big\|\|F (\Phi_{N_j} \phi ) - F_M(\Phi_{N_j} \phi)\|_{L^2(P^{(2m)}_{2, N_j}) H^s}\big\|_{L^2_T} \notag \\
& = (2T)^\frac{1}{2}
\|F ( \phi ) - F_M( \phi)\|_{L^2(P^{(2m)}_{2, N_j}) H^s} \notag \\
& \les T^\frac{1}{2} \|R_{N_j}\|_{L^4(\mu)}
\|F ( \phi ) - F_M( \phi)\|_{L^4(\mu) H^s} \notag \\
& \leq CT^{\frac{1}{2}} M^{-\eps},
\label{Y14}
\end{align}

\noi
for some small $\eps > 0$, uniformly in $j \in \N$.	
In the third step, we used the fact that 
$Z_N \ges 1$
in view of 
Proposition \ref{PROP:Gibbs1}: 
$Z_N = \| R_N(u) \|_{L^1(\rho)}\ \to 
 \| R(u) \|_{L^1(\rho)}>0$ as $N \to \infty$.
The fourth term on the right-hand side of~\eqref{Y13}
can be treated in an analogous manner.
Proceeding as in~\eqref{Y14}, we obtain
\begin{align*}
\big\|\|F_j (u_j) - F(u_j)\|_{L^2_T H^s}\big\|_{L^2(\nu_{N_j})}
& \leq 
(2T)^\frac{1}{2} \|R_{N_j}\|_{L^4(\mu)}
\|F_j ( \phi ) - F( \phi)\|_{L^4(\mu) H^s} \notag \\
& \leq CT^{\frac{1}{2}} N_j^{-\eps}.
\end{align*}

\noi
Putting everything together, 
we conclude that, after passing to a subsequence, 
 $F_j(u_j)$ converges to $F(u)$
in $L^2([-T, T]; H^s)$ almost surely with respect to $\wt P$.
Since the choice of $T>0$ was arbitrary, 
we can apply the previous argument iteratively for $T_\l = 2^\l$, $\l\in \N$.
Thus, for each $\l \geq 2$, we obtain  a set $\O_{\l} \subset \O_{\l-1} $ of full measure
such that 
a subsequence $F_{j^{(\l)}}(u_{j^{(\l)}})(\o) $ 
of $F_{j^{(\l-1)}}(u_{j^{(\l-1)}}) $ from the previous step
converges to $F(u)(\o)$
in $L^2([-T_\l, T_\l]; H^s)$ for all $\o \in \O_\l$.
Then, by a diagonal argument, 
passing to a subsequence, 
$F_j(u_j) $ converges to $F(u)$
in $L^2_\text{loc} H^s$ 
almost surely with respect to $\wt P$.
In particular, up to a subsequence, 
$F_j(u_j) $ converges to $F(u)$
in $\mathcal D'(\R\times \M)$
almost surely with respect to $\wt P$.
Therefore, $u$ is a global-in-time distributional solution to~\eqref{5NLS2}.
\end{proof}

\appendix

\section{Example of a concrete combinatorial argument: the case $\M=\T^2$ and $m=3$}
\label{SEC:A}

In this appendix, we present a concrete combinatorial computation on the Fourier side
for the proof of Proposition~\ref{PROP:WH1} when $m= 3$.
The aim of this appendix is to convince readers
of increasing combinatorial complexity
in $m$ .
Compare the $m=3$ case presented here with  the $m=2$ case
in~\cite{BO96}.
This shows that the use of the white noise functional is essential
in establishing our result for general  $m \geq 2$.

Let $G_N(u)$ be as in~\eqref{L4}.
For simplicity, we show that 
$G_N(u)$ is uniformly bounded in~$L^2(\mu)$.
Namely, we prove
\begin{align}
\|G_N(u) \|_{L^2(\mu)} \leq C < \infty
\label{A1}
\end{align}

\noi
independently of $N \in \N$.
Then, a small modification yields 
Proposition~\ref{PROP:WH1} for $p = 2$.
The general case follows from the $p = 2$ case and the Wiener chaos estimate (Lemma~\ref{LEM:hyp3}).

From~\eqref{L1a},~\eqref{L2},~\eqref{L3},  and~\eqref{L4} with \eqref{G5}, we have
\begin{align}
6 G_N(u) & = \int_{\T^2} :\! |u_N|^6 \!:  dx
 = \int_{\T^2} |u_N|^6 - 9 \s_N |u_N|^4 +18 \s_N^2 |u_N|^2 - 6 \s_N^3 dx \notag \\
& = 
\sum_{\substack{\G_{6}(0)\\|n_j| \leq N}}\prod_{j = 1}^{6} \frac{g^*_{n_j}}{\sqrt{1+|n_j|^2}}
- 9 \bigg(  \sum_{|n|\leq N}\frac{1}{1+|n|^2}  \bigg)
\bigg(\sum_{\substack{\G_{4}(0)\\|n_j| \leq N}}\prod_{j = 1}^{4} \frac{g^*_{n_j}}{\sqrt{1+|n_j|^2}}\bigg) \notag \\
& \hphantom{XX}
+18 \bigg(  \sum_{|n|\leq N}\frac{1}{1+|n|^2}  \bigg)^2
\bigg(  \sum_{|n|\leq N}\frac{|g_n|^2}{1+|n|^2}  \bigg)
-6 \bigg(  \sum_{|n|\leq N}\frac{1}{1+|n|^2}  \bigg)^3\notag \\
& =: \I + \II + \III+ \IV,
\label{A2}
\end{align}

\noi
where $\s_N$ is as in~\eqref{Wick4}
and $\G_k(0)$ and $g_n^*$ are as in~\eqref{N1a} and~\eqref{N1b}, respectively.

The basic idea is to regroup the terms in~\eqref{A2}
by  introducing some factorizations, 
and separately estimate each contribution.
Given $\l \in 2\N$, we say that we have a {\it pair} in
$\cj  n = (n_1, \dots, n_\l) \in \G_{\l} (0)$
if $n_j = n_{j'}$
for some odd $j$ and even $j'$.

Let us first consider $\I$.
Given $\cj n \in \G_6(0)$,
there are three cases:
(i) no pair, (ii) 1 pair, and (iii) 3 pairs.
Thus, write $\I$ as 
\[ \I = \I_1 + \I_2 + \I_3, \]

\noi
corresponding to the three cases: (i) no pair, (ii) 1 pair, and (iii) 3 pairs, respectively.
For simplicity of notation, we may drop the frequency restriction $|n| \leq N$ in the following
but it is understood that all the summations are over $\{|n|\leq N\}$.

\medskip

\noi
$\bullet$ {\bf Case 1:} No pair.
\quad In this case, we can easily estimate the contribution from $\I_1$ by 
\begin{align}
\| \I_1\|_{L^2(\mu)}
\les \bigg(\sum_{\G_{6}(0)}\prod_{j = 1}^{6} \frac{1}{1+|n_j|^2}\bigg)^\frac{1}{2}
\leq C < \infty.
\label{A3}
\end{align}

\noi
$\bullet$ {\bf Case 2:} 1 pair.
\quad In this case, there are 9 possibilities 
to form a pair from each of $\{n_1, n_3, n_5\}$
and $\{n_2, n_4, n_6\}$.
Thus, we have
\begin{align*}
\I_2 = 
9 \bigg(  \sum\frac{|g_n|^2}{1+|n|^2}  \bigg)
\bigg(\sum_{\substack{\G_{4}(0)\\n_1 \ne n_2, n_4}}\prod_{j = 1}^{4} \frac{g^*_{n_j}}{\sqrt{1+|n_j|^2}}\bigg).
\end{align*}

\noi
Combining this with $\II$, we have 
\begin{align}
\I_2 +\II & = 
9 \bigg(  \sum\frac{|g_n|^2 - 1}{1+|n|^2}  \bigg)
\bigg(\sum_{\substack{\G_{4}(0)\\n_1 \ne n_2, n_4}}\prod_{j = 1}^{4} \frac{g^*_{n_j}}{\sqrt{1+|n_j|^2}}\bigg) \notag \\
& \hphantom{XX}
- 18 \bigg(  \sum\frac{1}{1+|n|^2}  \bigg)
\bigg(  \sum\frac{|g_n|^2}{1+|n|^2}  \bigg)^2\notag \\
& \hphantom{XX}
 +9 \bigg(  \sum\frac{1}{1+|n|^2}  \bigg)
\bigg(  \sum\frac{|g_n|^4}{(1+|n|^2)^2}  \bigg)\notag \\
& =: \II_1+\II_2 + \II_3.
\label{A4}
\end{align}

Note that $\E[|g_n|^2 - 1] = 0$.
Then, by Lemma~\ref{LEM:hyp3}, we have 
\begin{align}
\|\II_1\|_{L^2(\mu) }
& \les  \bigg\|  \sum\frac{|g_n|^2 - 1}{1+|n|^2}  \bigg\|_{L^4(\mu)}
\bigg\| \sum_{\substack{\G_{4}(0)\\n_1 \ne n_2, n_4}}\prod_{j = 1}^{4} \frac{g^*_{n_j}}{\sqrt{1+|n_j|^2}}
\bigg\|_{L^4(\mu)} \notag \\
& \les  \bigg(  \sum\frac{1}{(1+|n|^2)^2}  \bigg)^\frac{1}{2}
\bigg( \sum_{\G_{4}(0)}\prod_{j = 1}^{4} \frac{1}{1+|n_j|^2}\bigg)^\frac{1}{2}
\leq C < \infty.
\label{A5}
\end{align}

\noi
The terms $\II_2$ and $\II_3$ are treated with other terms in the following.

\medskip

\noi
$\bullet$ {\bf Case 3:} 3 pairs.
\quad In this case, there are 3 scenarios
on the values of $n_1, n_3$, and $n_5$:
(i) $n_1 = n_3 = n_5$,  
(ii) $n_1 = n_3\ne n_5$ up to permutations, 
(iii) all distinct.
Write $\I_3 = \I_{31} + \I_{32} + \I_{33}$, corresponding
to these three cases.

\medskip
\noi
$\circ$ Subcase 3 (i): 
  $n_1 = n_3 = n_5$.
\quad In this case, 
the contribution can be estimated by 
\begin{align}
\|\I_{31}\|_{L^2(\mu) }
& \leq  \bigg\|  \sum\frac{|g_n|^6 }{(1+|n|^2)^3}  \bigg\|_{L^2(\mu)}
\les \bigg(  \sum\frac{1 }{(1+|n|^2)^6}  \bigg)^\frac{1}{2}
\leq C < \infty.
\label{A6}
\end{align}

\noi
$\circ$ Subcase 3 (ii): 
$n_1 = n_3\ne n_5$ up to permutations.
\quad
In this case, we have 
\begin{align}
\I_{32} 
& = \begin{pmatrix}3\\2\end{pmatrix}
\begin{pmatrix}3\\2\end{pmatrix}
 \bigg(  \sum\frac{|g_n|^4 }{(1+|n|^2)^2}  \bigg)
 \bigg(  \sum_{m \ne n} \frac{|g_m|^2 }{1+|m|^2}  \bigg) \notag \\
& = 
9  \bigg(  \sum\frac{|g_n|^4 }{(1+|n|^2)^2}  \bigg)
 \bigg(  \sum \frac{|g_m|^2 }{1+|m|^2}  \bigg)
-9 
 \bigg(  \sum\frac{|g_n|^6 }{(1+|n|^2)^3}  \bigg)\notag\\
& =: \I_{321} + O_{L^2(\mu) }(1).
\label{A7}
\end{align}

\noi
Here, we estimated the second term as in~\eqref{A6}.

\medskip
\noi
$\circ$ Subcase 3 (iii): all distinct.
\quad 
In this case, we have 
\begin{align}
\I_{33} 
& = 6 
 \bigg(  \sum \frac{|g_{n_1}|^2 }{1+|n_1|^2}  \bigg)
\bigg(  \sum_{n_3 \ne n_1} \frac{|g_{n_3}|^2 }{1+|n_3|^2}  \bigg)
\bigg(  \sum_{n_5 \ne n_1, n_3} \frac{|g_{n_5}|^2 }{1+|n_5|^2}  \bigg)\notag\\
& = 6 
 \bigg(  \sum \frac{|g_{n_1}|^2 }{1+|n_1|^2}  \bigg)
\bigg(  \sum_{n_3 \ne n_1} \frac{|g_{n_3}|^2 }{1+|n_3|^2}  \bigg)
\bigg(  \sum_{n_5 \ne n_1} \frac{|g_{n_5}|^2 }{1+|n_5|^2}  \bigg)\notag\\
& \hphantom{XX}
-6  \bigg(  \sum\frac{|g_{n_1}|^2 }{1+|n_1|^2}  \bigg)
 \bigg(  \sum_{n_3 \ne n_1} \frac{|g_{n_3}|^4 }{(1+|n_3|^2)^2}  \bigg) \notag \\
& = 6 
 \bigg(  \sum \frac{|g_{n_1}|^2 }{1+|n_1|^2}  \bigg)
\bigg(  \sum_{n_3 \ne n_1} \frac{|g_{n_3}|^2 }{1+|n_3|^2}  \bigg)
\bigg(  \sum \frac{|g_{n_5}|^2 }{1+|n_5|^2}  \bigg)\notag\\
& \hphantom{XX}
-6  \bigg(  \sum\frac{|g_{n_1}|^2 }{1+|n_1|^2}  \bigg)
 \bigg(  \sum_{n_3 \ne n_1} \frac{|g_{n_3}|^4 }{(1+|n_3|^2)^2}  \bigg) \notag \\
& \hphantom{XX}
-6  \bigg(  \sum\frac{|g_{n_1}|^4 }{(1+|n_1|^2)^2}  \bigg)
 \bigg(  \sum_{n_3 \ne n_1} \frac{|g_{n_3}|^2 }{1+|n_3|^2}  \bigg) \notag \\
& = 6 
 \bigg(  \sum \frac{|g_{n_1}|^2 }{1+|n_1|^2}  \bigg)^3
 -18  \bigg(  \sum\frac{|g_{n_1}|^2 }{1+|n_1|^2}  \bigg)
 \bigg(  \sum \frac{|g_{n_3}|^4 }{(1+|n_3|^2)^2}  \bigg) \notag \\
& \hphantom{XX}
 +12   \bigg(  \sum\frac{|g_{n_1}|^6 }{(1+|n_1|^2)^3}  \bigg)\notag\\
& =: \I_{331}+\I_{332} + O_{L^2(\mu) }(1).
\label{A8}
 \end{align}

From 
\eqref{A4},~\eqref{A7}, and~\eqref{A8}, we have
\begin{align*}
\II_3 +  \I_{321}+ \I_{332}
=  9 \bigg(  \sum\frac{1-|g_n|^2}{1+|n|^2}  \bigg)
\bigg(  \sum\frac{|g_n|^4}{(1+|n|^2)^2}  \bigg).
\end{align*}

\noi
Proceeding as in~\eqref{A5}, we obtain
\begin{align}
\|\II_3 +  \I_{321}+ \I_{332}\|_{L^2(\mu) }\leq C < \infty.
\label{A9}
\end{align}
	
\noi
From~\eqref{A2},~\eqref{A4}, and~\eqref{A8}, we have 
\begin{align*}
\III + \IV + \II_2 +  \I_{331}
=  6 \bigg(  \sum\frac{|g_n|^2-1}{1+|n|^2}  \bigg)^3.
\end{align*}

\noi
Proceeding as in~\eqref{A5}, we obtain
\begin{align}
\|\III + \IV + \II_2 +  \I_{331}
\|_{L^2(\mu) }
& \les  \bigg\|  \sum\frac{|g_n|^2 - 1}{1+|n|^2}  \bigg\|_{L^6(\mu)}^3
\leq C < \infty.
\label{A10}
\end{align}

Finally, putting~\eqref{A2}-\eqref{A10} together, 
we obtain~\eqref{A1}.

\begin{remark}\rm
The above computation merely handles 
the nonlinear part $G_N(u)$ in the truncated Wick ordered Hamiltonian.
In order to prove Theorem~\ref{THM:1}, 
one still needs to estimate $F_N(u)$ in~\eqref{nonlin2},
which has a different combinatorial structure.
For our problem, it is much more efficient to work
on the physical side, using the white noise functional
and the (generalized) Laguerre polynomials.

\end{remark}

\begin{ackno}\rm
T.O.~was supported by the European Research Council (grant no.~637995 ``ProbDynDispEq'').
L.T.~was supported by the grant ``ANA\'E'' ANR-13-BS01-0010-03. 
The authors would like to thank  Martin Hairer
for helpful discussions.
They are also grateful to the anonymous referees
for their comments.

\end{ackno}


\begin{thebibliography}{99}

\bibitem{AC}
S.~Albeverio, A.~Cruzeiro,
{\it  Global flows with invariant (Gibbs) measures for Euler and Navier-Stokes two dimensional fluids,} 
Comm. Math. Phys. 129 (1990) 431--444.

\bibitem{Aubin}
T.~Aubin, Nonlinear analysis on manifolds. Monge-Amp\`ere equations. Grundlehren der Mathematischen Wissenschaften, 252. Springer-Verlag, New York, 1982. xii+204 pp.
 
 
\bibitem{Bass}
R.~Bass,
{\it Stochastic processes.}
Cambridge Series in Statistical and Probabilistic Mathematics, 33.
Cambridge University Press, Cambridge, 2011. xvi+390 pp.

\bibitem{Benyi}
\'A.~B\'enyi, T. Oh, 
{\it Modulation spaces, Wiener amalgam spaces, and Brownian motions}, Adv. Math. 228 (2011), no. 5, 2943--2981.

\bibitem{BO93}
J.~Bourgain, 
{\it Fourier transform restriction phenomena for certain lattice subsets and applications to nonlinear evolution equations, I: Schr\"odinger equations,} Geom. Funct. Anal. 3 (1993), 107--156.

\bibitem{BO94}
J.~Bourgain, 
{\it Periodic nonlinear Schr\"odinger equation and invariant measures}, 
Comm. Math. Phys. 166 (1994), no. 1, 1--26.

\bibitem{BO96}
J.~Bourgain, 
{\it Invariant measures for the 2D-defocusing nonlinear Schr\"odinger equation}, 
Comm. Math. Phys. 176 (1996), no. 2, 421--445. 

\bibitem{BO97}
J.~Bourgain, 
{\it Invariant measures for the Gross-Piatevskii equation,} 
J. Math. Pures Appl. (9) 76 (1997), no. 8, 649--702.


\bibitem{BS}
D.~Brydges, G.~Slade, 
{\it Statistical mechanics of the 2-dimensional focusing nonlinear Schr\"odinger equation,}
 Comm. Math. Phys. 182 (1996), no. 2, 485--504.




\bibitem{BTT}
N.~Burq, L.~Thomann, N.~Tzvetkov,
{\it Long time dynamics for the one dimensional non linear Schr\"odinger equation},
 Ann. Inst. Fourier (Grenoble).
 63 (2013), no. 6, p. 2137--2198. 




\bibitem{BTT0}
N.~Burq, L.~Thomann, N.~Tzvetkov, 
{\it Global infinite energy solutions for the cubic wave equation,}
 {Bull. Soc. Math. France.} 143 (2015), no. 2,  301--313.

\bibitem{BTT1}
N.~Burq, L.~Thomann, N.~Tzvetkov, 
{\it Remarks on the Gibbs measures for nonlinear dispersive equations}, 
arXiv:1412.7499 [math.AP].



\bibitem{BTIMRN} 
N.~Burq, N.~Tzvetkov, 
{\it Invariant measure for a three dimensional nonlinear wave equation,} 
Int. Math. Res. Not. IMRN 2007, no. 22, Art. ID rnm108, 26 pp. 



\bibitem{BT1}
N.~Burq, N.~Tzvetkov,
{\it Random data Cauchy theory for supercritical wave equations. I. Local theory,}
Invent. Math. 173 (2008), no. 3, 449--475.





\bibitem{BT2}
N.~Burq, N.~Tzvetkov, 
{\it Random data Cauchy theory for supercritical wave equations. II. A global existence result,} Invent. Math. 173 (2008), no. 3, 477--496.



\bibitem{CO}
J.~Colliander, T.~Oh, {\it  Almost sure well-posedness of the cubic nonlinear Schr\"odinger equation below $L^2(\T)$}, Duke Math. J. 161 (2012), no. 3, 367--414. 



\bibitem{DPD}
G.~Da Prato, A.~Debussche, 
{\it Two-dimensional Navier-Stokes equations driven by a space-time white noise}, 
J. Funct. Anal. 196 (2002), no. 1, 180--210.

\bibitem{DPT1}
G.~Da Prato, L.~Tubaro, 
{\it Wick powers in stochastic PDEs: an introduction,} 
Technical Report UTM, 2006, 39 pp.

\bibitem{DengBO}
Y.~Deng, 
{\it Invariance of the Gibbs measure for the Benjamin-Ono equation,}
 J. Eur. Math. Soc. (JEMS) 17 (2015), no. 5, 1107--1198.

\bibitem{DerezinskiGerard}
J.~Derezinski, C.~G\'erard,
{\it Mathematics of quantization and quantum fields,} Cambridge Monographs on Mathematical Physics. 
Cambridge University Press, Cambridge, 2013. xii+674 pp.

  

\bibitem{SuzzoniBBM}
A.-S.~de Suzzoni,
{\it Wave turbulence for the BBM equation: stability of a Gaussian statistics under the flow of BBM,} Comm. Math. Phys. 326 (2014), no. 3, 773--813.



\bibitem{Folland}
G.~Folland, {\it Harmonic analysis in phase space}, Annals of Mathematics Studies, 122. Princeton University Press, Princeton, NJ, 1989. x+277 pp.

\bibitem{GJ}
J.~Glimm, A.~Jaffe, 
{\it Quantum physics. A functional integral point of view,} Second edition. Springer-Verlag, New York, 1987. xxii+535 pp.




\bibitem{LRS}
J.~Lebowitz, H.~Rose, E.~Speer, 
{\it Statistical mechanics of the nonlinear Schr\"odinger equation,} J. Statist. Phys. 50 (1988), no. 3-4, 657--687.



\bibitem{McKean}
H.P.~McKean, 
{\it Statistical mechanics of nonlinear wave equations. IV. Cubic Schr\"odinger,} 
 Comm. Math. Phys. 168 (1995), no. 3, 479--491. 
 {\it Erratum: Statistical mechanics of nonlinear wave equations. IV. Cubic Schr\"odinger}, Comm. Math. Phys. 173 (1995), no. 3, 675.

 \bibitem{Meyer}
P.A.~Meyer,
{\it Quantum probability for probabilists,} Lecture Notes in Mathematics, 1538. 
  Springer-Verlag, Berlin, 1993. x+287 pp.
 
\bibitem{MV} 
H.P.~McKean, K.L.~Vaninsky,
{\it Statistical mechanics of nonlinear wave equations. Trends and perspectives in applied mathematics}, 
239--264, Appl. Math. Sci., 100, Springer, New York, 1994. 


\bibitem{Nelson2}
E.~Nelson, 
{\it A quartic interaction in two dimensions}, 
 1966 Mathematical Theory of Elementary Particles (Proc. Conf., Dedham, Mass., 1965) pp. 69--73 M.I.T. Press, Cambridge, Mass.


\bibitem{Nelson}
E.~Nelson, {\it The free Markoff field,} J. Functional Analysis 12 (1973), 211--227.




\bibitem{NORS}
A.~Nahmod, T.~Oh, L.~Rey-Bellet, G.~Staffilani,
{\it  Invariant weighted Wiener measures and almost sure global well-posedness for the periodic derivative NLS,} J. Eur. Math. Soc. 14 (2012), 1275--1330. 



\bibitem{OH3} T. Oh, {\it Invariant Gibbs measures and a.s. global well-posedness for coupled KdV systems,}
 Diff. Integ. Eq.  22 (2009), no. 7--8, 637--668. 





\bibitem{OHSBO} T. Oh, {\it Invariance of the Gibbs Measure for the Schr\"odinger-Benjamin-Ono System,}
SIAM J. Math. Anal. 41 (2009), no. 6, 2207--2225. 



\bibitem{OQ}
T.~Oh, J.~Quastel, 
{\it On Cameron-Martin theorem and almost sure global existence}, 
Proc. Edinb. Math. Soc. 59 (2016), 483--501. 







\bibitem{OQV}
T.~Oh, J.~Quastel, B.~Valk\'o,
{\it  Interpolation of Gibbs measures and white noise for Hamiltonian PDE}, J. Math. Pures Appl. 97 (2012), no. 4, 391--410.



\bibitem{ORT}
T.~Oh, G.~Richards, L.~Thomann,
{\it  On invariant Gibbs measures for the generalized KdV equations}, Dyn. Partial Differ. Equ. 13 (2016), no.2, 133--153. 

\bibitem{OTNLW}
T.~Oh, L.~Thomann,
{\it Invariant Gibbs measures for the 2-d defocusing nonlinear wave equations}, 
arXiv:1703.10452 [math.AP].


\bibitem{R}
G.~Richards,
{\it Invariance of the Gibbs measure for the periodic quartic gKdV,}
to appear in 
Ann. Inst. H. Poincar\'e Anal. Non Lin\'eaire.


\bibitem{Simon}
B.~Simon, 
{\it  The $P(\varphi)_2$ Euclidean (quantum) field theory,} Princeton Series in Physics. Princeton University Press, Princeton, N.J., 1974. xx+392 pp.

\bibitem{SmSo}
 H. Smith and C.  Sogge.
 \newblock  On the $L^p$ norm of spectral clusters for compact manifolds with boundary.
 \newblock{\em  Acta Math.} 198 (2007), no. 1, 107--153.
 
\bibitem{TTz}
L.~Thomann, N.~Tzvetkov, 
{\it Gibbs measure for the periodic derivative nonlinear Schr\"odinger equation},
Nonlinearity 23 (2010), no. 11, 2771--2791.



\bibitem{TZ1} N. Tzvetkov, {\it Invariant measures for the nonlinear Schr\"odinger equation on the disc,}
Dyn. Partial Differ. Equ. 3 (2006), no. 2, 111--160.

\bibitem{TZ2} N. Tzvetkov, {\it Invariant measures for the defocusing Nonlinear Schr\"odinger equation
(Mesures invariantes pour l'\'equation de Schr\"odinger non lin\'eaire),}
Annales de l'Institut Fourier, 58 (2008), 2543--2604.



\bibitem{Tzv}
N.~Tzvetkov, {\it Construction of a Gibbs measure associated to the periodic Benjamin-Ono equation,} Probab. Theory Related Fields 146 (2010), no. 3--4, 481--514.



\bibitem{Zhid2}
P.~Zhidkov, {\it Korteweg-de Vries and nonlinear Schr\"odinger equations: qualitative theory,} Lecture Notes in Mathematics, 1756. Springer-Verlag, Berlin, 2001. vi+147 pp.


\bibitem{Zworski}
M.~Zworski,    {\it Semiclassical analysis.} Graduate Studies in Mathematics, 138. American Mathematical Society, Providence, RI, 2012. xii+431 pp.
 
 




















\end{thebibliography}
\end{document}